\documentclass{llncs}

\usepackage{amsmath,amsfonts}
\usepackage{url}
\usepackage{algorithmicx}
\usepackage[ruled]{algorithm}
\usepackage{algpseudocode}
\usepackage{algpascal}
\usepackage{algc}
\usepackage{algorithm}

\newcommand{\cO}{\mathcal{O}}
\newcommand{\E}{\mathbf{E}}
\newcommand{\M}{\mathbf{M}}
\newcommand{\s}{\mathbf{S}}
\newcommand{\D}{\mathbf{D}}
\newcommand{\F}{\mathbb{F}}

\newcommand{\EE}{\mathrm{E}}
\newcommand{\BL}{\mathrm{BL}}
\newcommand{\TE}{\mathrm{TE}}

\newcommand{\keywords}[1]{{
    \list{}{\advance\topsep by -5ex \relax\small \leftmargin=1cm
      \labelwidth=0pt \listparindent=0pt \itemindent\listparindent
      \rightmargin\leftmargin}\item[\hskip\labelsep \bfseries
    Keywords:] {#1} \endlist}}

\institute{}
\author{ Seyed Gholamhossein Hosseini, Reza Rezaeian Farashahi\inst{1,2} } \institute{
	Department of Mathematical Sciences, Isfahan University of Technology,\\
	Isfahan, 84156-83111, Iran
	\\{\tt farashahi@cc.iut.ac.ir,\; g.hoseinimath@gmail.com}\\
	\and
	School of Mathematics,
	Institute for Research in Fundamental Sciences (IPM),
	P.O. Box 19395-5746, Tehran, Iran\\
}

\begin{document}

\title{Differential Addition on Twisted Edwards Curves}

\maketitle

\begin{abstract}
This paper presents new differential addition (i.e., the addition of two points with the known difference) and doubling formulas, as the core step in Montgomery scalar multiplication, for
twisted Edwards curves. The formulas are provided with cost of $5\M+4\s+1\D$, $3\M+7\s+1\D$ and $3\M+6\s+3\D$
when the given difference point is in affine form. Here, $\M, \s, \D$ denote the costs of a field
multiplication, a field squaring and a field multiplication by a constant, respectively. 
\end{abstract}

\keywords{Elliptic curves, Twisted Edwards curves, Montgomery ladder, Differential addition.}

\section{Introduction}
An elliptic curve $E$ over a field $\mathbb{F} $ is given by the Weiersras\ss\, equation
\[ y^2+a_1xy+a_3y=x^3+a_2x^2+a_4x+a_6\enspace \]
where coefficients $a_1, a_2, a_3, a_4$ and $a_6$ are in $\mathbb{F} $. Elliptic curves are represented in other forms 
such as Legendre equation, cubic equations, quartic equations and intersection of two quadratic surfaces~\cite{Silv,WDC8}.
Koblitz \cite{K87} and Miler \cite{M86} independently proposed the use of elliptic curves over finite fields in cryptography. Since the introduction of elliptic curve cryptography (ECC) elliptic curves over finite fields have been studied intensively and in particular, many proposals have been made to speed up their group arithmetic. ECC is one of the attractive asymmetric key cryptosystems  with the main advantage of achieving smaller key sizes under the same security level compare to that of other existing asymmetric systems such as RSA. This makes ECC suitable for software and hardware implementation in constrained environments including RFID tags, mobiles, sensors, and smart cards. 

The scalar multiplication is the main important operation of ECC which is implemented based on the basic operations in finite fields. 
That is to compute $kP$ for a given point $P$ on elliptic curve $E$ defined over a finite field $\F_q$ and
a given integer $k$. The scalar multiplication is performed recursively by point addition
and point doubling operations. One of the key factor in implementation of these basic curve operations is to reduce the number of field operations. This is why different forms of elliptic curves with several coordinates systems have been studied to improve the efficiency and to speed up the point multiplication. The well known recent form is Edwards curves \cite{Edw} and their variants (see~\cite{BBLP,BerLan1,BLRF,HWCD8}) with great impact to ECC.  

Side channel attacks use the time or power differences between implementing point addition and point doubling to reveal information about the bits of the secret $k$. Montgomery \cite{Mo} introduced a technique for scalar multiplication of points for a special type of curves in large characteristic that is known as Montgomery ladder. In each step of the Montgomery scalar multiplication algorithm both the addition and the doubling are used which makes this method resistant against simple side-channel attacks. For Montgomery curves, the basic formulas in each step of the Montgomery ladder is differential addition and doubling expressed only by the $x$-coordinates of the points. For the fixed point $P$ on the curve, this method computes the $x$-coordinate of the point $kP$ recursively by computing the $x$-coordinates of the points $P+2Q$ and $2Q$ from the $x$-coordinates of the points $P+Q$, $Q$. To avoid the costly field inversion operation, the computations are performed where points are represented in projective coordinates and the cost of projective $x$-coordinate formulas for Montgomery curves is $6\M+4\s+1\D$. Here $\M$, $\s$ and $\D$ are the costs of a field multiplication, a field squaring and a field multiplication by a constant, respectively. The $x$-coordinate of the fixed base point $P$ can be represented in affine form, then the differential mixed addition and doubling formulas are computed using $5\M+4\s+1\D$. 
 
The Montgomery method is extended to other forms of elliptic curves, where the basic operation in each step of the ladder is differential addition and doubling expressed only by suitable $w$-coordinates of the points. That is to compute the $w$-coordinates of the addition and doubling from the $w$-coordinates of given points and their difference. 
The Montgomery-like formulas for Edwards curves are presented in \cite{CGF}. Gaudry and Lubicz \cite{GL9} presents a very efficient Montgomery-like formulas for Kummer line the cost of $4\M+6\s+3\D$, and $3\M+6\s+3\D$ if the base point is affine. Bernstein and Lange \cite{BLEFD} extends the Kummer-line formulas for incomplete Edwards curves with the same costs. 

From the literature, the mixed differential addition and doubling formulas with the cost of $3\M+6\s+3\D$ are only given for elliptic curves with 3 points of order 2. Notice, complete twisted Edwards are suitable for cryptographic applications because of their fast complete addition law. A complete twisted Edwards curve has two points of order 4 and one single point of order 2. The main contribution of this paper is to provide faster Montgomary-like formulas for complete twisted Edwards curves, which covers all elliptic curves over finite fields with a point of order 4 and a single point of order 2. This paper presents new differential addition and doubling formulas for twisted Edwards curves with cost of $5\M+4\s+1\D$, $3\M+7\s+1\D$ and $3\M+6\s+3\D$ when the given difference point is in affine form. 

The rest of the paper is organized as follows. In \S 2 we review twisted Edwards curves, and in \S 3 we briefly describe differential addition on elliptic curves. The proposed new differential addition and doubling formulas are provided in \S 4, also, in \S 4 we present some example and an algorithm for recovery $x$ and $y$ coordinates of $kP$ when given $Q-P,w(P)$ and $w(Q)$,
 and finally, \S 5 concludes the paper with a comparison between our work and other previously related work.

Throughout the paper, the letter $p$ always denotes an odd prime number and $q$ denotes a prime power of $p$. 
A field is denoted by $\F$ and a finite field of size $q$ is denoted by $\F_q$. 
Let $\chi$ denote the quadratic character in $\F_q$, where $p\ge 3$. Then, for any $q$ where $p\ge 3$, we have
$u=w^2$ for some $w \in \F^*_q$ if and only if $\chi(u) =1$.

\section{Twisted Edwards curve}
In 2007, Edwards introduced a new normal form for elliptic curves \cite{Edw}. An original {\it Edwards curve}, defined over a field 
$\F$ with characteristic $p\ne2$, by the equation
\begin{equation*}
\label{eq:Edw} \E_{\EE, c}: \quad X^2 +Y^2 = c^2(1+X^2Y^2),
\end{equation*}
with $c\in \F$ and $c^5\ne c$. Bernstein and Lange \cite{BerLan1} considered the use of Edwards curves over finite fields for elliptic curve cryptography. 
They extended the original curves to the family of so called {\it Edwards curves}
\begin{equation*}
\label{eq:Edw-BL} \E_{\BL,d}: \quad X^2 +Y^2 = 1+ d X^2 Y^2,
\end{equation*}
where $d\in \F$ with $d\ne 0,1$.  The family of Edwards curves over a finite field $\F_q$ with odd characteristic is equivalent (up to $\F_q$ isomorphism) to the family of all elliptic curves over $\F_q$ with a $\F_q$-rational point of order 4 \cite{BBLP}. In other words, $\E_{\BL,d}(\F_q)$, the group of $\F_q$-rational points of the Edwards curve $\E_{\BL,d}$, has a $\F_q$-rational point of order 4 and in the other way around, every elliptic curve $E$ over  $\F_q$ with a point of order 4 can be represented as an Edwards curve. In addition, $\E_{\BL,d}(\F_q)$  has a single point of order $2$ if and only if $\chi(d)=-1$, i.e., the group $\E_{\BL,d}(\F_q)$ has three points of order $2$ if and only if $\chi(d)=1$.

%

Edwards curves and their extensions have attracted
great interest in elliptic curve cryptography (see~\cite{BBLP,BerLan1,BLRF,HWCD8}). 
Bernstein et.al. proposed the family of so-called {\it twisted Edwards}, \cite{BBLP}, given by
\begin{equation*}
\label{eq:Edw-T} \E_{\TE,a,d}: \quad aX^2 + Y^2 = 1+ d X^2Y^2,
\end{equation*}
where $a,d$ are distinct nonzero elements of $\F_q$. The addition and doubling law for $\E_{\TE,a,d}$ are given by 
\begin{equation}
\label{eq:TE}
\begin{array}{c}
(x_1,y_1)~,~(x_2,y_2)  \mapsto \left(\dfrac{x_1y_2+x_2y_1}  {1+dx_1x_2y_1y_2},\, \dfrac{y_1y_2-ax_1x_2} {1-dx_1x_2y_1y_2}
\right), \vspace*{3pt}\\
(x_1,y_1)  \mapsto \left(\dfrac{2x_1y_1}  {1+dx_1^2y_1^2},\,\dfrac{y_1^2-ax_1^2}  {1-dx_1^2y_1^2}  \right).\\
\end{array}
\end{equation}
The identity point of the addition law is $(0,1)$ and the additive negation of a point  $(x,y)$ is $(-x,y)$. 
The point $(0,-1)$ is a point of order 2.  If $\chi(a)=1$ then the points $(\pm 1/\sqrt{a}, 0) $ are of order 4. 

The projective closure of the twisted Edwards curve $\E_{\TE,a,d}$ in $\mathbb{P}^2$ includes the projective points $(X:Y:Z)$ in $\mathbb{P}^2(\F_q)$ satisfying the curve equation   
\begin{equation*}
 aX^2Z^2 + Y^2Z^2 = Z^4+ d X^2Y^2,
\end{equation*}
with the points at infinity $\infty_1=(1:0:0)$ and $\infty_2=(0:1:0)$. These points are singular. In the nonsingular model of $\E_{\TE,a,d}$ the point $\infty_1$  splits into two distinct $\F_q$-rational points if $\chi(ad)=1$ and is removed if $\chi(ad)=-1$. 
Similarly, above the point $\infty_2$ there exists exactly two distinct points if $\chi(d)=1$ and no point if $\chi(d)=-1$. 
So, if $\chi(d)=\chi(ad)=-1$ then the set of $\F_q$-rational projective points of $\E_{\TE,a,d}$ is the set of $\F_q$-rational affine points
which form a group. To represent the points above the points at infinity, the projective closures of $\E_{\TE,a,d}$  in $\mathbb{P}^3$ or in $\mathbb{P}\times \mathbb{P}$ are considered \cite{HWCD8,BL11}. The twisted Edwards curve $\E_{\TE,a,d}$ over $\F_q$ is represented by the set of points $(X:Y:T:Z)$ in $\mathbb{P}^3(\F_q)$ satisfying the equations  
\begin{equation*}
\quad aX^2 + Y^2 = Z^2+ dT^2,\qquad XY=ZT. 
\end{equation*}
Here, the $\F_q$-rational points above $\infty_1$ are 
$(1 : 0: \pm \sqrt{a/d} : 0)$  if $\chi(ad)=1$, and the points above $\infty_2$ are $(0: \pm \sqrt{d} : 1 : 0 )$ if $\chi(d)=1$. Hisil et.al.~\cite{HWCD8} gave the addition laws for the projective closure of $\E_{\TE,a,d}$  embedded in $\mathbb{P}^3$ as follows. 
\begin{gather} \label{Ad:TE1}
(X_1 : Y_1 : T_1: Z_1)+(X_2 : Y_2 : T_2 : Z_2)  \hspace*{37pt} \nonumber\\
=\begin{array}{lcl} \label{Eq:TEH1}
(\; &(X_1Y_2 + Y_1X_2)(Z_1Z_2-dT_1T_2) &: (Y_1Y_2-aX_1X_2)(Z_1Z_2 + dT_1T_2)  \\ 
&: (Y_1Y_2-aX_1X_2)(X_1Y_2 + Y_1X_2)   &: (Z_1Z_2-dT_1T_2)(Z_1Z_2 + dT_1T_2)\; ).
\end{array}
\end{gather}
Here the identity point is $(0:1:0:1)$ and the additive negation of a point  $(X:Y:T:Z)$ is $(-X:Y:-T:Z)$. 
The point $(0:-1:0:1)$ is a point of order 2 and the points $(1 : 0 : \pm \sqrt{a/d} : 0)$ are the points of order 2 if $\chi(ad)=1$.
The points $(\pm 1/\sqrt{a}: 0:0:1)$ and $(0: \pm \sqrt{d} : 1 : 0)$ are of order 4 if $\chi(a)=1$ and $\chi(d)=1$, respectively.
Other points of order 4 are $( \alpha : \beta :  \alpha\beta: 1)$ where $\alpha^4=1/ad$ and $\beta^4=a/d$.

Notice, that the family of twisted Edwards curves is the extension of the family of Edwards curves. 
Clearly, every Edwards curve $\E_{\BL,d}$ is the twisted Edwards $\E_{\TE,1,d}$. Furthermore, a
twisted Edwards curve $\E_{\TE,a,d}$ is a twist of the Edwards curve $\E_{\BL,\frac{d}{a}}$. 
Therefore, the family of twisted Edwards includes Edwards curves and their twists. 

The addition law in twisted Edwards curve $\E_{\TE,a,d}$ is complete if $\chi(d)=\chi(ad)=-1$. In other words, the projective formulas \eqref{Eq:TEH1}
have no exceptional cases if $\chi(a)=1$ and $\chi(d)=-1$ \cite{BBLP,HWCD8}. Here, we show that the addition law in twisted Edwards curve $\E_{\TE,a,d}$ is also complete if $\chi(a)=\chi(ad)=-1$. 

\begin{theorem}\label{thm:TED}
	Let $a$, $d$ be elements of $\F_q$ such that $ad(a-d)\ne 0$. Let
	$\E_{\TE,a,d}$ be a twisted Edwards curve over $\F_q$. Then, $\E_{\TE,a,d}$ has a complete projective formulas 
	over $\F_q$ if $\chi(ad)=-1$. 
\end{theorem}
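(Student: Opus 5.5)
The plan is to split on the value of $\chi(a)$. Since $\chi(ad)=-1$, either $\chi(a)=1,\chi(d)=-1$ or $\chi(a)=-1,\chi(d)=1$. The first case is the known complete case of \cite{BBLP,HWCD8}: the affine formulas \eqref{eq:TE} have nonvanishing denominators, and \eqref{Eq:TEH1} has no exceptions. So the work is in the second case, $\chi(a)=-1$, $\chi(d)=1$. There the points above $\infty_2$, namely $(0:\pm\sqrt d:1:0)$, are rational, so the affine law cannot be complete as written. Instead I will produce a different complete projective formula, either by swapping the roles of the coordinates or by transporting the complete law along an isomorphism.

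The cleanest route is an $\F_q$-isomorphism to a curve already covered by the first case. The map $(X,Y)\mapsto (X', Y')=(1/Y\cdot\ldots)$ is not obviously available, so I will use the standard twist: $\E_{\TE,a,d}$ is $\F_q$-isomorphic to $\E_{\TE,d,a}$ via $(x,y)\mapsto(x,1/y)$. To check this, substitute $y=1/y'$ into $ax^2+y^2=1+dx^2y^2$ and multiply by $y'^2$. This gives $ax^2y'^2+1=y'^2+dx^2$, that is $dx^2+y'^2=1+ax^2y'^2$. In projective $\mathbb{P}^3$ coordinates this is the linear map $(X:Y:T:Z)\mapsto(T:Z:X:Y)$, since $x'=X/Z$ while $y'=Z/Y$. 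One must check it respects $XY=ZT$ and sends the quadric $aX^2+Y^2=Z^2+dT^2$ to $dT^2+Z^2=Y^2+aX^2$, which it does. Being linear and bijective on $\mathbb{P}^3$, it maps the whole projective closure onto the other closure, points at infinity included.

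The target curve $\E_{\TE,d,a}$ has twisted parameter $d$ with $\chi(d)=1$ and Edwards parameter $a$ with $\chi(a)=-1$, so it lies in the known complete case. I will then pull back the complete formula \eqref{Eq:TEH1} for $\E_{\TE,d,a}$ through the linear change of coordinates. This gives explicit projective formulas on $\E_{\TE,a,d}$, obtained from \eqref{Eq:TEH1} by the substitution $X\to T$, $Y\to Z$, $T\to X$, $Z\to Y$ with $a,d$ exchanged. Because the map is a group isomorphism, these formulas are complete on $\E_{\TE,a,d}(\F_q)$.

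The main obstacle is confirming that the map is a group isomorphism and not merely a curve isomorphism. Concretely, the identity $(0:1:0:1)$ maps to $(0:1:0:1)$, which is a positive sign, but a genuine homomorphism check is still needed: both curves are elliptic curves and the map sends identity to identity, so it is a homomorphism. I also need to justify that completeness of \eqref{Eq:TEH1} in the first case holds for all pairs, including points above infinity. Here I would cite \cite{HWCD8}. Alternatively, I would verify directly that the denominators $Z_1Z_2\pm dT_1T_2$ cannot both vanish with the numerators. The argument for that is that $d^2T_1^2T_2^2=Z_1^2Z_2^2$ combined with the curve equations forces $ad$ to be a square, contradicting $\chi(ad)=-1$.
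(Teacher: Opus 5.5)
Your proposal is correct and matches the paper's proof. Like the paper, you split on $\chi(a)$, cite \cite{BBLP,HWCD8} for $\chi(a)=1,\chi(d)=-1$, and for $\chi(a)=-1,\chi(d)=1$ transport the complete law \eqref{Eq:TEH1} of $\E_{\TE,d,a}$ through the linear involution $(X:Y:T:Z)\mapsto(T:Z:X:Y)$, which yields exactly \eqref{Eq:TEH2}. Your extra checks fill in details the paper leaves implicit: the map is a bijection of the full $\mathbb{P}^3$ closures, sending the rational points $(0:\pm\sqrt d:1:0)$ to the affine points $(\pm1/\sqrt d,0)$ of $\E_{\TE,d,a}$, and a base-point-preserving isomorphism is a group isomorphism.
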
   
\begin{proof}
If $\chi(d)=\chi(ad)=-1$, then the projective formulas \eqref{Eq:TEH1}
are complete formulas for $\E_{\TE,a,d}$ \cite{BBLP,HWCD8}. If $\chi(a)=\chi(ad)=-1$, then the twisted Edwards curve 
$\E_{\TE,a,d}$ is birationally equivalent to $\E_{\TE,d,a}$ via the map $(x,y) \rightarrow (x,1/y)$. In other words, the projective points of the projective closures of $\E_{\TE,a,d}$ and $\E_{\TE,d,a}$ in $\mathbb{P}^3(\F_q)$ are corresponded to each other via the map $(X:Y:T:Z) \rightarrow (T: Z: X: Y)$. 
From \eqref{Eq:TEH1} and using the exchange of variables,  we obtain the projective formulas for the curve $\E_{\TE,a,d}$ as follows. 
\begin{gather}
(X_1 : Y_1 : T_1: Z_1)+(X_2 : Y_2 : T_2 : Z_2)  \hspace*{37pt} \nonumber\\
=\begin{array}{lcl} \label{Eq:TEH2}
(\; &(Z_1Z_2-dT_1T_2)(T_1Z_2 + Z_1T_2) &: (Y_1Y_2-aX_1X_2)(Y_1Y_2 + aX_1X_2) \\ 
&: (T_1Z_2 + Z_1T_2)(Y_1Y_2-aX_1X_2)   &: (Z_1Z_2-dT_1T_2)(Y_1Y_2 + aX_1X_2) \; ).
\end{array}
\end{gather}
Therefore, the projective formulas \eqref{Eq:TEH2} are complete formulas for $\E_{\TE,a,d}$ over $\F_q$ where $\chi(a)=-1$ and $\chi(d)=1$ which concludes the proof.\qed
\end{proof}	

It is shown in \cite{BBLP}, that a twisted Edwards curve $\E_{\TE,a,d}$ over a field $\F$ is birationally equivalent to a {\it Montgomery curve}
~\cite{Mo} given by the equation 
\begin{equation*}
\label{eq:Mont} \E_{\M,A,B}: \quad BY^2=X^3+AX^2+X,
\end{equation*}
where $A,B \in \F$ with $A\ne \pm2$ and $B\ne0$. In more details 
a twisted Edwards curve $\E_{\TE,a,d}$ is birationally equivalent to the Montgomery curve $\E_{\M,A,B}$ 
by the map $\psi:\E_{\TE,a,d} \to \E_{\M, A,B}$
\begin{equation}\label{eq:TEMont}
\psi(x,y)=\Big(\frac{1+y}{1-y},\frac{1+y}{x(1-y)} \Big).
\end{equation}
where $A=2(a+d)/(a-d),\, B=4/(a-d)$. 
Also, the Montgomery curve $\E_{\M,A,B}$ is birationally equivalent to the twisted Edwards
curve $\E_{\TE,a,d}$ by the inverse map 
\begin{equation}\label{MonTE}
\psi^{-1}(X,Y)=\Big(\frac{X}{Y},\frac{X-1}{X+1} \Big),
\end{equation}
where $a=(A + 2)/B,\, d = (A - 2)/B$.


\section{Differential addition}
\label{sec-dadd}
The main computational core for elliptic curve cryptography is performing scalar multiplication in an efficient and secure way. 
The computation of $kP$, for a given point $P$ on elliptic curve $E$ defined over a finite field $\F_q$ and
a given integer $k$, is performed recursively by point addition (PA) and point doubling (PD) formulas. The time or power differences between implementing point addition (PA) and point doubling (PD) can reveal information about the bits of the secret $k$ which makes the system insecure  against side channel attacks. 

In Montgomery curves \cite{Mo}, the special formulas for addition and doubling is done with the $X$ and $Z$ coordinates of a point in projective form. In each step of Montgomery ladder both addition and doubling are performed, which makes this method resistant against simple side-channel attacks. Recovering the $Y$ coordinate of the output point is done in the last step from the $X$ and $Z$ coordinates. 
Algorithm~\ref{dadM} provides the differential $x$-coordinate formulas for Montgomery curves $\E_{\M,A,B}$ over $\F_{q}$  \cite{Mo}.

\alglanguage{pseudocode}
\begin{algorithm}
	\caption{\; Projective $x$-coordinate dADD for Montgomery curves}\label{dadM}
	\begin{algorithmic}[1]
		\Statex {\bf Input : $\E_{\M,A,B}/\F_q : BY^2=X^3+AX^2+X$} \Comment{The Montgomery curve $\E_{\M,A,B}$ }
		\Statex {\bf \hspace{40pt} $(X_i: Z_i)=x(P_i),\; i=0,1,2.$} \Comment{$x(P_0)=x(P_1-P_2)$}
		\Statex {\bf Output : $(X_i: Z_i)=x(P_i),\; i=3,4.$} \Comment{$x(P_3)=x(P_1+P_2),\; x(P_4)=x(2P_1)$}
		\Statex {}
		\Function{\; \rm{d}ADD}{$(X_0: Z_0),\; (X_1: Z_1),\; (X_2: Z_2)$}
		\State $X_3=Z_0~(X_1X_2-Z_1Z_2)^2  $
		\State $Z_3=X_0~(X_1Z_2-X_2Z_1)^2$
		\State $X_4=(X_1^2-Z_1^2)^2$
		\State $Z_4=4X_1Z_1(~(X_1+Z_1)^2+(A-2)X_1Z_1~)$
		\State \Return $((X_4: Z_4),\; (X_3: Z_3))$\Comment{The differential addition and doubling }
		\EndFunction
	\end{algorithmic}
\end{algorithm}

We note, that $\cO=(0:1:0)$ is the point at infinity on the Montgomery curve $\E_{\M,A,B}$ over $\F_{q}$ and $x(\cO)$ in $\mathbb{P}(\F_q)$ is represented by $(1:0)$. Also, $x((0,0))$ is given by $(0:1)$. We can easily check, that the projective $x$-coordinate differential addition formulas in Algorithm~\ref{dadM} work for all inputs except for the case where $x(P_0)$ equals $(1:0)$ or $(0:1)$, i.e., where the point $P_0$ equals $\cO$ or $(0,0)$. In other words, the Montgomery ladder works for all inputs if the base point is not a point at infinity or the point $(0,0)$. The Montgomery ladder is given by the  
Algorithm~\ref{MonLD}, that for any integer $k$ and any point $P$ (not equal $\cO$ and $(0,0)$) computes $x(kP)$ correctly. 
In particular, the ladder works properly even if the integer $k$ is bigger than the order of the base point $P$. 
Therefore, one can use random scalar $k$ as a countermeasure to protect against differential power analysis attack.

\begin{algorithm}
	\caption{\; The modified Montgomery scalar multiplication}\label{MonLD}
	\begin{algorithmic}[1]
		\Statex {\bf Input : $\E_{\M,A,B}/\F_q : BY^2=X^3+AX^2+X$} \Comment{The Montgomery curve $\E_{\M,A,B}$ }
		\Statex {\bf \hspace{32pt} $P =(x :y : z)\in\E_{\M,A,B}(\F_q)$} \Comment{$P\ne \cO=(0:1:0), P \ne (0:0:1)$}
		\Statex {\bf \hspace{32pt} $k=(k_{m-1},\cdots, k_1,k_0)$} \Comment{$0\le k \in \mathbb{Z}$}
		\Statex {\bf \hspace{33pt} $(X_0:Z_0):=(x:z),\, (X_1:Z_1):=(1:0),\, (X_2:Z_2):=(x:z).$}
		\Statex {\bf Output : $x(kP)$}
		\Statex {}
		\For{$i:=m-1$ {\bf down to} $0$}
		\If{$k_i=0$}
		\State {$((X_1:Z_1), (X_2:Z_2)):=\text{dADD}((X_0: Z_0),(X_1: Z_1),(X_2: Z_2))$}
		\Else
		\State {$((X_2:Z_2), (X_1:Z_1)):=\text{dADD}((X_0: Z_0),(X_2: Z_2),(X_1: Z_1))$}
		\EndIf
		\EndFor
		\State \Return $(X_1: Z_1),\; (X_2: Z_2)$ \Comment{The differential addition and doubling }
	\end{algorithmic}
\end{algorithm}

The Montgomery method is extended to other forms of elliptic curves with a suitable rational function.   
Let $w$ be a rational function in the coordinate ring of the elliptic curve $E$ over $\F_q$ where $w(P)=w(-P)$ for every point $P$ in $E(\F_q)$. 
The $w$-coordinate \emph{differential addition} and \emph{doubling} means to compute $w(P+Q)$ and $w(2Q)$ from
given values $w(P)$, $w(Q)$ and $w(P-Q)$, where $P, Q$ are points on $E(\F_q)$. 
If $w$ is regular at the point $P$ then $w(P)$ is represented by $(w(P):1)$ in the projective line ${\mathbb{P}}(\F_q)$. Otherwise, it is
represented by $(1:0)$. For the fixed point $P$ on the curve and a positive integer $k$, the $w$-coordinate of the point $kP$ is performed recursively by differential addition and doubling formulas expressed only by $w$-coordinates of the points. 
 
A projective $w$-coordinate differential addition is \emph{complete} if it works for all inputs. 
Also, it is \emph{almost complete} if the $w$-coordinate differential formulas work for all inputs 
except for the case where $w(P_0)$ equals $w(\cO)$, where $\cO$ is the neutral element of the group of points $E(\F_q)$. 
Note that, the projective $x$-coordinate differential addition for Montgomery curves given in Algorithm~\ref{dadM} 
works for all inputs except for the case where $w(P_0)$ equals $(1:0)$ or $(0:1)$. 
The fast and complete differential addition formulas are very interesting for implementations. 
But, if the base point $P_0$ has large prime order then with suitable $w$-function 
$w(P_0) \ne w(\cO)$ and $w(P_0) \ne (1:0), (0:1)$. Therefore, the almost complete and Montgomery-like formulas are usable for cryptographic applications.   

The cost of projective $x$-coordinate differential addition and doubling formulas for Montgomery curves $\E_{\M,A,B}$ over $\F_{q}$ given by Algorithm~\ref{dadM} is $6\M+4\s+1\D$, where a multiplication in $\F_q$ costs one $\M$, a squaring costs one $\s$ and the cost of field multiplication by a parameter (as a constant) is denoted by $\D$. The $x$-coordinate of the fixed base point $P$ can be represented by  $x(P)=(X_0 : Z_0)$, where $Z_0=1$, then the differential addition and doubling formulas are computed using $5\M+4\s+1\D$. 

Castryck, Galbraith and Farashahi \cite{CGF} give the $y$-coordinate differential addition Montgomery-like formulas for Edwards curves. They use the quasi free projective map between twisted Edwards and Montgomery curves which provides the Montgomary formulas for twisted Edwards curves with the cost of $6\M+4\s+1\D$, and $5\M+4\s+1\D$ if the base point is affine. They also give a doubling formulas with cost of $1\M+3\s+3\D$ assuming $d$ is a square element.  Gaudry and Lubicz \cite{GL9} obtained a very efficient differential addition Montgomery-like formulas for Kummer line with the cost of $4\M+6\s+3\D$, and $3\M+6\s+3\D$ if the base point is affine. The Kummer line behaves very similar to the Montgomery form. Compare to the Montgomery form, the Kummer line formulas saves $2\M-2\s$, but have extra $2$ multiplication by constants. The Kummer line is linked to the Legendre curve $\E_\lambda: Y^2=X(X-1)(X-\lambda)$, where $\lambda=a^4/(a^4-b^4)$ and $(a:b)$ defines the Kummer line. The group order of the corresponding curve $E_\lambda$ over $\F_q$ is divisible by 4, and in particular it has 3 points of order 2. Bernstein and Lange \cite{BLEFD} provides a Kummer-line formulas for Edwards curves $\E_{\BL,d}$ where $d=r^2$ is a square element. They give the cost of $w$-coordinates mixed differential addition and doubling formulas for $w=ry$ and $w=ry^2$ by $3\M+6\s+5\D$ and $3\M+6\s+3\D$ respectively. Here, the Edwards curve $\E_{\BL,d}$ over $\F_q$ with $\chi(d)=1$ has 3 points of order 2 and the addition law is not complete. In the next section, we provide new Montgomary-like formulas for complete twisted Edwards curves. 


\vspace*{5pt}
\noindent \textbf{$w$-coordinate dADD.} Throughout the paper, for the function $w$ on elliptic curve $E$ over $\F$, and for the points $P_1, P_2 \in E(\F)$, let  
$w_0=w(P_2-P_1)$, $w_1=w(P_1)$, $w_2=w(P_2)$, $w_3=w(P_1+P_2)$ and $w_4=w(2P_1)$. Also, for $i=0,1,2,3,4$, $w_i$ are represented by $(W_i : Z_i)$ in the projective line $\mathbb{P}(\F).$
\section{Differential addition on twisted Edwards curve}\label{sec diff}
In this section, we provide new differential addition and doubling formulas for twisted Edwards. The  mixed formulas have the cost $5\M+4\s+1\D$, $3\M+7\s+1\D$. In addition, we give mixed formulas with cost of $3\M+6\s+3\D$ for subfamily of twisted Edwards curves. 
These efficient and fast formulas are applicable for complete twisted Edwards in this subfamily. From the birational map between the twisted Edwards and Montgomery curve, we can use similar formulas for Montgomery curves.   

\subsection{Twisted Edwards}
Here, we consider \textit{twisted Edwards curves} $\E_{\TE,a,d}$  and present new
\textit{$w$}-coordinates differential formulas.

\begin{proposition}\label{P:TEDW1}
	Let $w$ be the function on twisted Edwards curve  $\E_{\TE,a,d}$ over $\F_q$ given by $w(x,y)=dx^2y^2$. Let $P_1$ , $P_2$ be two points of twisted Edwards curve  $\E_{\TE,a,d}$. Consider the $w$-coordinate dADD for $P_1$ , $P_2$. If  $w_1w_2 \ne 1 $ and $w_1^2 \ne 1$, then we have 
	\begin{equation}\label{eq:A1}
	w_4	= \dfrac{4w_1((w_1+1)^2-ew_1)}{(w_1^2-1)^2 }~,
	\end{equation}
	
	\begin{equation}\label{eq:BB1}
	w_3w_0 = \dfrac{(w_1-w_2)^2}{(w_1w_2-1)^2}~,
	\end{equation}
and
\begin{equation}\label{eq:C1}
w_3+w_0 = \dfrac{2(w_1+w_2)(w_1w_2+1)-4(e-2)w_1w_2}{(w_1w_2-1)^2},
\end{equation}
where $e=4a/d$.
\end{proposition}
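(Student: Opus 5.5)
We argue directly from the affine addition law \eqref{eq:TE}. For a point $P=(x,y)$ we write $u=x^2$, $v=y^2$. The curve equation gives $au+v=1+duv$, so $w=duv$ and $au+v=1+w$. Hence $u$ and $v$ are the roots of a quadratic determined by $w$, and every symmetric expression in $(au,v)$ is a function of $w$: $au\cdot v=(a/d)w=\tfrac{e}{4}w$ and $au+v=1+w$. The plan is to write each quantity in the statement as a symmetric function of $(a u_i, v_i)$ and then substitute these relations.

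\textbf{Doubling.} By \eqref{eq:TE}, $w_4=d\,x_4^2y_4^2=\dfrac{4d\,u_1v_1(v_1-au_1)^2}{(1+w_1)^2(1-w_1)^2}$, using $(1+dx_1^2y_1^2)(1-dx_1^2y_1^2)=1-w_1^2$. The numerator is $4w_1(v_1-au_1)^2$, and
\[
(v_1-au_1)^2=(v_1+au_1)^2-4au_1v_1=(1+w_1)^2-e w_1 .
\]
This gives \eqref{eq:A1}. The hypothesis $w_1^2\ne1$ is exactly what makes the denominators nonzero.

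\textbf{Addition.} We use the standard trick of pairing $P_1+P_2$ with $P_1-P_2$. Since $w(-P)=w(P)$, we have $w_0=w(P_1-P_2)$, and the subtraction formula is \eqref{eq:TE} with $x_2\to -x_2$. Put $\delta=dx_1x_2y_1y_2$, so $\delta^2=w_1w_2$. Then
\[
w_3=\dfrac{d(x_1y_2+x_2y_1)^2(y_1y_2-ax_1x_2)^2}{(1-\delta^2)^2},\qquad
w_0=\dfrac{d(x_1y_2-x_2y_1)^2(y_1y_2+ax_1x_2)^2}{(1-\delta^2)^2}.
\]
Both share the denominator $(1-w_1w_2)^2$, which is nonzero because $w_1w_2\ne1$.

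For the product $w_3w_0$ we have $(x_1y_2+x_2y_1)(x_1y_2-x_2y_1)=u_1v_2-u_2v_1$ and $(y_1y_2-ax_1x_2)(y_1y_2+ax_1x_2)=v_1v_2-a^2u_1u_2$. It then remains to check the identity
\[
d^2(u_1v_2-u_2v_1)^2(v_1v_2-a^2u_1u_2)^2=(w_1-w_2)^2(1-w_1w_2)^2 .
\]
Equivalently, $d(u_1v_2-u_2v_1)(v_1v_2-a^2u_1u_2)=\pm(w_1-w_2)(1-w_1w_2)$. We plan to prove this by eliminating $v_i=(1-au_i)/(1-du_i)$, which is the curve equation solved for $v_i$, and factoring. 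This is a routine rational-function identity in $u_1,u_2$.

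For the sum $w_3+w_0$ we expand the squares. The cross terms pair up, so $w_3+w_0$ is symmetric under the swap $x_2\to -x_2$. Its numerator is $d$ times
\[
\bigl[(u_1v_2+u_2v_1)(v_1v_2+a^2u_1u_2)-8a\,u_1u_2v_1v_2\bigr]\cdot 2,
\]
up to careful bookkeeping of the mixed terms $2x_1x_2y_1y_2$. That piece equals $\tfrac{2}{d}\cdot 2\delta\cdot(\ldots)$ and reduces via $\delta^2=w_1w_2$. Next we use $au_i+v_i=1+w_i$ and $au_iv_i=\tfrac{e}{4}w_i$, which give
\[
(au_1v_2+au_2v_1)(v_1v_2+a^2u_1u_2)=(au_1+v_1)(au_2+v_2)\cdot(\ldots)
\]
rewritten via elementary symmetric functions, together with $a u_1u_2v_1v_2=\tfrac{e}{4d}w_1w_2\cdot\ldots$. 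Collecting terms should yield $2(w_1+w_2)(w_1w_2+1)-4(e-2)w_1w_2$.

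The main obstacle is this symmetric-function reduction. We must express products such as $(au_1v_2+au_2v_1)$ and $(v_1v_2+a^2u_1u_2)$ individually, which are \emph{not} symmetric in each pair $(au_i,v_i)$. Only their product, after adding the cross term, is symmetric. Rather than fight this by hand, we would fall back on the same elimination $v_i=(1-au_i)/(1-du_i)$, expressing $w_i=du_i(1-au_i)/(1-du_i)$, and verifying both \eqref{eq:BB1} and \eqref{eq:C1} as polynomial identities in $u_1,u_2$ after clearing denominators. As an alternative sanity route, we can transport to the Montgomery model via \eqref{eq:TEMont}, where analogous product and sum formulas for $x$-coordinates are classical.
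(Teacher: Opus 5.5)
Your doubling computation is exactly the paper's, and your setup for the addition part is correct: you pair $P_1+P_2$ with $P_1-P_2$ and note the common denominator $(1-w_1w_2)^2$. But the proposal never actually proves the formulas for $w_3w_0$ and $w_3+w_0$. The product identity is deferred to an elimination of $v_i=(1-au_i)/(1-du_i)$ that you do not perform, and the one explicit expansion you give for the sum is wrong.

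Write $p=x_1y_2$, $q=x_2y_1$, $r=y_1y_2$, $s=ax_1x_2$. Then $(p+q)^2(r-s)^2+(p-q)^2(r+s)^2=2(p^2+q^2)(r^2+s^2)-8pqrs$ with $pqrs=a\,u_1u_2v_1v_2$. So the numerator of $w_3+w_0$ is $2d\bigl[(u_1v_2+u_2v_1)(v_1v_2+a^2u_1u_2)-4a\,u_1u_2v_1v_2\bigr]$, not $-8a$ inside the bracket. Carried through, your version yields $-6e\,w_1w_2$ where $-4e\,w_1w_2$ is needed. The brute-force fallback would succeed if started from your (correct) formulas for $w_3$ and $w_0$, but as written it is a promise rather than a proof.

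The missing idea, which also removes your ``main obstacle'', is to expand the numerators of $x_3y_3$ and $x_0y_0$ as
\[
(x_1y_2\pm x_2y_1)(y_1y_2\mp ax_1x_2)=A\pm B,\qquad A=x_1y_1(y_2^2-ax_2^2),\quad B=x_2y_2(y_1^2-ax_1^2).
\]
Then $w_3w_0=d^2(A^2-B^2)^2/(1-w_1w_2)^4$ and $w_3+w_0=2d(A^2+B^2)/(1-w_1w_2)^2$. The identity you already used for doubling, $(y^2-ax^2)^2=(1+w)^2-ew$, gives $dA^2=w_1\bigl((1+w_2)^2-ew_2\bigr)$ and $dB^2=w_2\bigl((1+w_1)^2-ew_1\bigr)$. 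Hence $d(A^2-B^2)=(w_1-w_2)(1-w_1w_2)$ and $2d(A^2+B^2)=2(w_1+w_2)(w_1w_2+1)-4(e-2)w_1w_2$, which are the two claimed formulas. This is precisely the paper's computation; it writes the same numerators using $y_i^4+a^2x_i^4=(1+w_i)^2-2ax_i^2y_i^2$.

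Relatedly, your claim that $(u_1v_2+u_2v_1)(v_1v_2+a^2u_1u_2)$ is not symmetric in each pair $(au_i,v_i)$ is false. With $\alpha_i=au_i$ and $\beta_i=v_i$, it equals $a^{-1}\bigl[\alpha_1\beta_1(\alpha_2^2+\beta_2^2)+\alpha_2\beta_2(\alpha_1^2+\beta_1^2)\bigr]$, which is symmetric in each pair. So your symmetric-function plan goes through directly, using $\alpha_i+\beta_i=1+w_i$ and $\alpha_i\beta_i=\tfrac{e}{4}w_i$, once the cross-term coefficient is corrected.
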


\begin{proof}
	This $w$-function is well computed for all affine points on a twisted Edwards curve .
	Since $-(x,y)=(-x,y)$, for all points $P$ on the curve, we have
	$w(P)=w(-P)$. Also, we have $w(\cO)=0$.
	Let $P_1=(x_1,y_1)$ be a point of twisted Edwards curve  $\E_{\TE,a,d}$. Then from the doubling formula \eqref{eq:TE} we have
	\begin{equation*}
	w_4=d(x_4y_4)^2=\dfrac{4dx_1^2y_1^2(ax_1^2-y_1^2)^2}{(1+dx_1^2y_1^2)^2(1-dx_1^2y_1^2)^2}.
	\end{equation*}
	
	We can write $(ax_1^2-y_1^2)^2=(ax_1^2+y_1^2)^2-4ax_1^2y_1^2=(1+dx_1^2y_1^2)^2-4ax_1^2y_1^2$.
	So by substitution $w_1=dx_1^2y_1^2$ and $e=4a/d$, we have $(ax_1^2-y_1^2)^2=(1+w_1)^2-ew_1$. Then we have
	\begin{equation*}
	w_4	= \dfrac{4w_1((w_1+1)^2-ew_1)}{(w_1^2-1)^2 }.
	\end{equation*}
	Also from the addition formula \eqref{eq:TE}, we have 
	\begin{equation*} \label{eq:PAdd1}
	w_3w_0=\dfrac{\left(d x_1^2y_1^2(y_2^4+a^2x_2^4)-dx_2^2y_2^2(y_1^4+a^2x_1^4) \right)^2} {\left(1-(dx_1^2y_1^2)(dx_2^2y_2^2) \right)^4}~~,
\end{equation*}
And
\begin{equation*}
w_3+w_0=\dfrac{2d x_1^2y_1^2(y_2^4+a^2x_2^4)+2dx_2^2y_2^2(y_1^4+a^2x_1^4)-8adx_1^2y_1^2x_2^2y_2^2} {\left(1-d^2x_1^2y_1^2x_2^2y_2^2 \right)^2}.
	\end{equation*}
	
	Now by substitution $y_i^4+a^2x_i^4=(1+dx_i^2y_i^2)^2-2ax_i^2y_i^2$ and $w_i=dx_i^2y_i^2$  for $i=1,2$ in above equations, we obtain 
	$$ w_3w_0=\dfrac{\left(w_1((1+w_2)^2-2a/d~w_2)-w_2((1+w_1)^2-2a/d~w_1)\right)^2}  {(1-w_1w_2)^4},$$
	$$ w_3+w_0=\dfrac{2w_1((1+w_2)^2-2a/d~w_2)+2w_2((1+w_1)^2-2a/d~w_1)-8a/dw_1w_2}  {(1-w_1w_2)^2}.$$
	Now by some straightforward calculation, we obtain 
	$$w_3w_0=\dfrac{\left( w_1w_2(w_1-w_2) -(w_1-w_2)\right)^2}  {(1-w_1w_2)^4}= \dfrac{(w_1-w_2)^2}{(w_1w_2-1)^2},$$
	$$ w_3+w_0=\dfrac{2(w_1+w_2)(w_1w_2+1)-4(e-2)w_1w_2}{(w_1w_2-1)^2}.$$ \qed
	%
	%
\end{proof}
Assume that $w_0$ is given as a field element, and the inputs $w_1, w_2$ are given as fractions
$W_1/Z_1$ , $W_2/Z_2$ and the outputs $w_4, w_3 $ are given as fraction $W_4/Z_4$ and  $W_3/Z_3$. From
Eq.~\eqref{eq:A1} and Eq.~\eqref{eq:BB1} the explicit projective formulas are given by
\begin{equation}\label{eq:P1}
\begin{array}{c}
 \dfrac{W_4}{Z_4}= \dfrac{4W_1Z_1(~(W_1+Z_1)^2-eW_1Z_1)} {(W_1-Z_1)^2(W_1+Z_1)^2}, \vspace*{7pt}\\
 \dfrac{W_3}{Z_3}= \dfrac{Z_0~(W_1Z_2-W_2Z_1)^2}  {W_0~(W_1W_2 -Z_1Z_2)^2}\enspace. 
\end{array}
\end{equation}
The cost of this projective $w$-coordinates addition and doubling formulas is $6\M+4\s+1\D$. If we set $Z_0=1$, then the mixed projective
$w$-coordinates differential addition and doubling formulas have the total cost $5\M+4\s+1\D $ as
follows:
\begin{equation}
\label{eq:P2}
\begin{array}{c}
A_1=(W_1+Z_1) ,\ B_1=(W_1-Z_1),\ A_2=(W_2+Z_2),\ B_2=(W_2-Z_2), \vspace*{3pt}\\
C=A_1B_2~ ,~\ D=A_2B_1~,~\ 	E=A_1^2-B_1^2 , \vspace*{3pt}\\
W_4	= E(A_1^2-(e/4)~E)~,~Z_4=A_1^2B_1^2, \vspace*{3pt}\\
W_3=(C-D)^2~,~Z_3=w_0(C+D)^2 
\enspace. 
\end{array}
\end{equation}
From \eqref{eq:P2}, the costs of differential addition and doubling formulas are $3\M+2\s $ and $2\M+2\s+1\D $, respectively. 
And, the total cost of the mixed differential addition and doubling is 
$5\M+4\s+1\D$. In addition, the cost of following mixed differential addition and doubling formulas is $3\M+7\s+1\D$, 
where $1\D$ is the multiplication by the parameter $e/4=a/d$. So, if the parameter $e$ is chosen to be small then the cost of mixed differential formulas is $3\M+7\s$. 
\begin{equation}
\label{eq:P4}
\begin{array}{c}
 A_1=(W_1+Z_1) ,\ B_1=(W_1-Z_1),\ A_2=(W_2+Z_2),\ B_2=(W_2-Z_2), \vspace*{3pt}\\
 C=A_1B_2~ ,~\ D=A_2B_1~,~\ 	E=A_1^2-B_1^2 ~,~F=(A_1^4+B_1^4)-E^2, \vspace*{3pt}\\
W_4	= 2(A_1^4-(e/4)E^2)-F~~~,~~~Z_4=F, \vspace*{3pt}  \\
 W_3=(C-D)^2~~~,~~~Z_3=w_0(C+D)^2 \enspace.  
\end{array}
\end{equation}
Furthermore, for the twisted Edwards curves $\E_{\TE,a,d}$ with $\chi(e(e-4))=\chi(a(a-d))=1$,  
the cost of the following mixed differential addition and doubling formulas is  $3\M+6\s+3\D$. Here we let $r^2=(e-4)/e$.  
\begin{equation}
\label{eq:P6}
\begin{array}{c}
A_1=(W_1 + Z_1)~,~ B_1=(W_1 - Z_1)~,~A_2=(W_2 + Z_2)~, B_2=(W_2 -Z_2), \vspace*{3pt}\\
C= A_1~B_2~,~D= A_2~B_1~,~H_1=(rA_1^2+B_1^2)^2~,~ H_2=(rA_1^2-B_1^2)^2, \vspace*{3pt}\\
G=(H_1+H_2)\ ,\ K=(H_1-H_2)\ ,\ S=\frac{1}{r}K\ ,\ T=rK, \vspace*{3pt}\\
W_4=2G-S-T~~,~~Z_4=T-S, \vspace*{3pt}\\
W_3=(C-D)^2~~,~~Z_3=w_0(C+D)^2 \enspace.
\end{array}
\end{equation}
From differential addition and doubling formulas \eqref{eq:P6}, the costs of differential
addition and doubling are $3\M+2\s $, $4\s+3\D $ respectively. And, the total cost of the mixed differential addition and doubling formulas is
$3\M+6\s+3\D$, where $2\D$ is the multiplication by the parameter $r$ and one $\D$ is the multiplication by $1/r$. So, if the parameter $r$ is chosen to be small then the cost of mixed differential formulas is $3\M+6\s+1\D$. 
\begin{proposition}\label{P:TEDW01}
Fix a field  $\F_q$ with odd charactristic and $a,d \in \F_q $ such that $ad(a-d) \ne 0$.
Let $\infty_1=(0:\pm \sqrt{d}:1:0)$ and  $\infty_2=(1:0:\pm \sqrt{a/d}:0)$.Define $w:\E_{\TE,a,d}(\F_q) \mapsto \F_q  \cup \{\infty \} $ as fallow : $w(x,y)=dx^2y^2; ~w(\infty_1)=w(\infty_2)=\infty$. 

Let $W_i,Z_i~(i=0,...,4)$  be elements of $\F_q$ and $e=4a/d$. Define
\begin{equation*}
\label{eq:PJ0}
\begin{array}{c}
W_4= 4W_1Z_1(~(W_1+Z_1)^2-eW_1Z_1), \vspace*{7pt}\\
Z_4= (W_1^2-Z_1^2)^2,~~~~~~~~~~~~~~~~~~~~~~\vspace*{7pt}\\
W_3= Z_0~(W_1Z_2-W_2Z_1)^2,~~~~~~~~~~~~\vspace*{7pt}\\
Z_3= W_0~(W_1W_2 -Z_1Z_2)^2.~~~~~~~~~~~\enspace 
\end{array}
\end{equation*}
a) Let $P_1$ be an element of $\E_{\TE,a,d}$. If $W_1 \ne 0 $ or $ Z_1 \ne 0 ;~w(P_1)=W_1/Z_1$, then $W_4/Z_4 \ne 0/0$ and $w(2P_1)=W_4/Z_4$. \\
b) Assume that $W_0 Z_0\ne 0;~w(P_0)=W_0/Z_0. ~W_1 \ne 0 $ or $ Z_1 \ne 0 ;~w(P_1)=W_1/Z_1 .$ 
 $W_2 \ne 0$ or $ ~Z_2 \ne 0;~w(P_2)=W_2/Z_2$,  then $W_3/Z_3 \ne 0/0$ and $w(P_1+P_2)=W_3/Z_3$. 

Here $W/Z$ means the quotient of $W$ and $Z$ in $\F_q$  if $Z \ne 0$; it means $\infty$ if $W \ne 0$ and $Z=0$; it is undefined if $W=Z=0$.
\end{proposition}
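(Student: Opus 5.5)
The plan is to reduce both parts to two ingredients: the rational identities of Proposition~\ref{P:TEDW1}, which already give $w(2P_1)$ and $w_3w_0$ on a dense open set, and a separate check that the homogeneous expressions $(W_4:Z_4)$ and $(W_3:Z_3)$ never degenerate to $0/0$. The link between them is a density argument. View $w$ as a morphism $E\to\mathbb{P}^1$ on the smooth model, taking the value $\infty$ exactly at $\infty_1,\infty_2$. In the $\mathbb{P}^3$ model we have $w=dT^2/Z^2$, so $w=(dT^2:Z^2)$ is regular everywhere. Consider the two maps
\[P_1\mapsto w(2P_1), \qquad P_1\mapsto (W_4:Z_4),\]
and similarly, with $P_0=P_2-P_1$,
\[(P_1,P_2)\mapsto w(P_1+P_2), \qquad (P_1,P_2)\mapsto (Z_0(W_1Z_2-W_2Z_1)^2 : W_0(W_1W_2-Z_1Z_2)^2).\]
Each pair consists of morphisms to $\mathbb{P}^1$ on the locus where the homogeneous pair is not $(0,0)$. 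By Proposition~\ref{P:TEDW1} they agree on the dense open set $w_1w_2\neq1$, $w_1^2\neq1$, where all points are affine. Since $\mathbb{P}^1$ is separated, they then agree everywhere on that locus. So the equalities follow as soon as non-degeneracy is proved. Throughout, the representative $(W_i:Z_i)$ may be rescaled freely, since all formulas are homogeneous.

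For part a), the first step is to suppose $Z_4=(W_1^2-Z_1^2)^2=0$, so that $W_1=\pm Z_1\neq0$.
\begin{itemize}
\item If $W_1=Z_1$, then $W_4=4W_1^4(4-e)$, and $4-e=4(d-a)/d\neq0$.
\item If $W_1=-Z_1$, then $W_4=-4eW_1^4\neq0$.
\end{itemize}
Hence $(W_4,Z_4)\neq(0,0)$, and part a) then follows from the density argument. As a sanity check on the boundary cases:
\begin{itemize}
\item If $Z_1=0$, then $(W_4:Z_4)=(0:W_1^4)$, so the formula gives $0$. This is consistent: $\infty_1$-type points have order $2$ and $\infty_2$-type points have $2P=(0,-1)$, so $w(2P_1)=0$ in both cases.
\item If $w_1=\pm1$, the denominators $1\pm dx_1^2y_1^2$ of the doubling law vanish, so $2P_1$ lies above infinity and $w(2P_1)=\infty$.
\end{itemize}

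For part b), the hypothesis $W_0Z_0\neq0$ means $w_0\neq0,\infty$. The first step is to observe that the fibres of $w$ are $\{\pm P,\ \pm P+(0,-1)\}$. The inclusion is clear from $-(x,y)=(-x,y)$ and $(x,y)+(0,-1)=(-x,-y)$, and equality follows because $w$ has degree $4$. Now suppose $W_3=Z_3=0$. Then $w_1=w_2$ and $w_1w_2=1$, so $w_1=w_2=\pm1$. The cases are then as follows.
\begin{itemize}
\item $P_2\in\{P_1,\ P_1+(0,-1)\}$: then $P_0=P_2-P_1\in\{\cO,(0,-1)\}$, so $w_0=0$, which is excluded.
\item $P_2\in\{-P_1,\ -P_1+(0,-1)\}$: then $P_0=-2P_1$ up to adding $(0,-1)$, so $w_0=w(2P_1)=\infty$ by the computation in part a), which is also excluded.
\end{itemize}
Thus $(W_3,Z_3)\neq(0,0)$. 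The identity $w_3=w_0^{-1}(w_1-w_2)^2/(w_1w_2-1)^2$ from \eqref{eq:BB1} then extends to all admissible inputs by density.

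I expect the main obstacle to be making the density argument rigorous over $\F_q$, since $\F_q$-points alone are not Zariski dense. The first thing I would try is to argue over $\overline{\F}_q$ on the smooth model, where the singular points $\infty_1,\infty_2$ are resolved and $w$ extends regularly. A second route, if that becomes awkward, is to derive the two polynomial identities directly from the complete projective addition laws \eqref{Eq:TEH1} and \eqref{Eq:TEH2}. At every pair of points at least one of these two laws gives a nonzero output. Substituting $w=(dT^2:Z^2)$ and using the curve equations then gives the required identities everywhere, without any appeal to density.
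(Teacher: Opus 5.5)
\textbf{Gap in part b).} Your framework is sound: work on the smooth $\mathbb{P}^3$ model over $\overline{\F}_q$, where $w=(dT^2:Z^2)$ is a morphism; prove non-degeneracy; then extend Proposition~\ref{P:TEDW1} from a dense open set. Part a) goes through as you wrote it. The gap is in the non-degeneracy step of part b). The claim that the fibres of $w$ are $\{\pm P,\pm P+(0,-1)\}$ because $w$ has degree $4$ is false. Already $T/Z=xy$ has degree $4$: it has simple zeros at $(0,\pm1)$ and $(\pm1/\sqrt{a},0)$, and simple poles at the four points with $Z=0$. So $w=d(T/Z)^2$ has degree $8$. Explicitly, $(x,y)+(1/\sqrt{a},0)=(y/\sqrt{a},-\sqrt{a}\,x)$ has the same $w$-value. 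The fibres are $\pm P+\langle(1/\sqrt{a},0)\rangle$. Hence, when $\chi(a)=1$, your case split misses the $\F_q$-rational possibilities $P_2=\pm P_1+(\pm1/\sqrt{a},0)$, which also satisfy $w_2=w_1=\pm1$.

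\textbf{Repair and comparison.} The fix uses the correct fibre. The subgroup $\langle(1/\sqrt{a},0)\rangle=\{(0,\pm1),(\pm1/\sqrt{a},0)\}$ lies in $w^{-1}(0)$, and $w$ is invariant under translation by it. So $P_0=P_2-P_1$ either lies in this subgroup, giving $w_0=0$, or lies in $-2P_1+\langle(1/\sqrt{a},0)\rangle$. In the second case $w_0=w(2P_1)=\infty$ by part a), since $W_1=\pm Z_1$ gives $Z_4=0\neq W_4$. Both outcomes contradict $W_0Z_0\neq0$.

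With this repair your proof is complete, and it takes a genuinely different route from the paper. The paper splits into cases ($Z_1=0$, $W_1=0$, $P_2=\pm P_1$, points at infinity) and otherwise quotes Proposition~\ref{P:TEDW1}, but outside that proposition's hypotheses. In a) it never treats $w_1=\pm1$, and in b) it derives $w_1w_2\neq1$ from Proposition~\ref{P:TEDW1}, where it is an assumption. Your density argument covers exactly these cases uniformly.

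\textbf{Fallback route.} One caution: your fallback would not work as stated. The laws \eqref{Eq:TEH1} and \eqref{Eq:TEH2} both vanish identically when $Z_1Z_2=dT_1T_2$ and $Y_1Y_2=aX_1X_2$. This does occur on incomplete curves, for example with $y_1^2=ax_1^2$ and $P_2=(1/(\sqrt{d}\,y_1),1/(\sqrt{d}\,x_1))$. So these two laws are not a complete system in general, and that route would need a further addition law.
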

\begin{proof}
a)
If $Z_1=0$ then $Z_4=W_1^4\ne 0 $ and $W_4=0$ so $(W_4:Z_4) = (0:1)$ and  $W_4/Z_4 \ne 0/0$
as claimed.
Also  $Z_1=0$ so $P_1= \infty_1$ or $ \infty_2$ and $2P_1=\cO$ hence $w(2P_1)=(0:1)$ as claimed.
If $W_1=0$ then $Z_4=Z_1^4\ne 0 $ and $W_4=0$ so $(W_4:Z_4) = (0:1)$ and  $W_4/Z_4 \ne 0/0$
as claimed.
from  $W_1=0$ we have $x_1=0$ or $y_1=0$. So $P_1=(0,\pm1)$ or $P_1=(\pm 1/\sqrt{a},0)$. So $2P_1= \cO $ or $2P_1=(0,-1)$ so $w(2P_1)=(0:1)$ as claimed.

Assume from now on that $W_1~Z_1 \ne 0$; then $w(P_1) \ne \infty$. 
If $W_4/Z_4=0/0$, then we have $(W_1+Z_1)^2-eW_1Z_1=0$ and $W_1^2-Z_1^2=0$. So $W_1=\pm Z_1$. 
 Now by substitution $W_1=-Z_1$ in equation $(W_1+Z_1)^2-eW_1Z_1=0$ we have $W_1Z_1=0$, contradicting the hypothesis that $W_1Z_1 \ne 0$. on the other hands, if $W_1=Z_1$, then we have $(a-d)W_1=0$, contradicting the hypothesis that $W_1\ne 0$ and $ad(a-d) \ne 0$. So  $W_4/Z_4 \ne 0/0$ as claimed. Also from  $W_1~Z_1 \ne 0$ we  know that, the point of $P_1$ is affine point, so from proposition \ref{P:TEDW1}  we have  $w(2P_1)=W_4/Z_4$. \vspace*{7pt}\\
 
b)
If $P_2=P_1$ then $P_0=\cO$ so $W_0=0$, contradiction. Hence $P_1 \ne P_2$.

If $P_2=-P_1$ then $W_1/Z_1=w(P_1)=w(-P_2)=w(P_2)=W_2/Z_2$ so $W_1Z_2=W_2Z_1$ so $W_3=0$. We will show that $Z_3 \ne 0$, hence $(W_3:Z_3) = (0:1)$ and  $W_3/Z_3 \ne 0/0$ as claimed.
Note that $W_1 \ne 0:$ if  $W_1 = 0$ then  $Z_1 \ne 0$ so  $w(P_1) =0$ so $P_1=\cO$ or $(0,-1)$ so $P_2=-P_1=P_1$, contradiction. similary $W_2 \ne 0$.
Now  suppose that  $Z_3=0$. Then $W_0(W_1W_2-Z_1Z_2)=0$, but $W_0 \ne 0$, so $(W_1W_2-Z_1Z_2)=0$ so $\frac{W_2}{Z_2}=\frac{Z_1}{W_1}$.  In addition, we have $\frac{W_2}{Z_2}=\frac{W_1}{Z_1}$. Hence $\frac{W_1}{Z_1}=\frac{Z_1}{W_1}$, so $W_1^2-Z_1^2=0$. So from doubling formula we have $Z_4=0$, But from $P_2=-P_1$ we hve  $P_0=2P_1$, so  $Z_0=Z_4=0$, contradiction. Also we have $P_1+P_2=\cO$, so $w(P_3)=w(\cO)=(0:1)$ as calimed.

If $P_1= \infty_1,~P_2= \infty_2$ (or $P_1= \infty_2,~P_2= \infty_1$ ) then $Z_1=Z_2=0$ so $Z_3=W_0(W_1W_2)^2 \ne 0$ and $W_3=0$, so $(W_3:Z_3) = (0:1)$, hence  $W_3/Z_3 \ne 0/0$ as claimed. Also from addition formula \eqref{Ad:TE1} 
$w(P_1+P_2)=(0:1)$.

If $P_1$ be a point at infinity and $P_2$ be an affine point,  then $Z_1=0$ , $Z_2 \ne 0$. We know that $w(P_1)=(1:0)$, 
so $W_3=Z_0Z_2^2$ and $Z_3=W_0W_2^2$. If $W_3=0$ then we have $Z_2=0$, contradiction. Hence   $W_3/Z_3 \ne 0/0$.

Assume from now on that  $P_1,P_2 \ne \infty_1,~\infty_2$ and $P_2 \ne \pm P_1$. If $Z_3 = 0$, then $W_0(W_1W_2-Z_1Z_2)=0$, but $W_0Z_0 \ne 0$, so  $W_1W_2-Z_1Z_2=0$. We know that, $P_1$ and $P_2$ are affine points, so $Z_1 Z_2\ne 0$. So $(W_1/Z_1)(W_2/Z_2)=1$, hence $w(P_1)w(P_2)=1$. But from proposition \ref{P:TEDW1} we have $w_1w_2-1 \ne 0$ , so $Z_3 \ne 0$ and $W_3/Z_3 \ne 0/0$ as claimed. On the other hand, the points of $P_1$ and $P_2$ are affine points, so from proposition \ref{P:TEDW1}  we have  $w(P_1+P_2)=W_3/Z_3$. ~~~ \qed
\end{proof}
\begin{proposition}\label{P:TEDW001}
 Let $a,d$ be elements of $\F_q$ such that $ad(a-d) \ne 0$ and $\chi(d)=\chi(ad)=-1$.  
	Let $w$ be the function on twisted Edwards curve    over $\F_q$ given by $w(x,y)=dx^2y^2$. The following differential addition formula is complete.
\begin{gather}\label{eq:D01}
w_3+w_0 = \dfrac{2(w_1+w_2)(w_1w_2+1)-4(e-2)w_1w_2}{(w_1w_2-1)^2}~.
\end{gather}
	where $e=4a/d$.
\end{proposition}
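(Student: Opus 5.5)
Since \eqref{eq:D01} is exactly the identity \eqref{eq:C1} of Proposition~\ref{P:TEDW1}, the plan is to show that its hypotheses hold for every pair of points when $\chi(d)=\chi(ad)=-1$. Completeness then means two things. First, the formula is defined for all inputs: the denominator $(w_1w_2-1)^2$ never vanishes. Second, the value equals $w(P_1+P_2)+w(P_2-P_1)$ for every pair $P_1,P_2\in\E_{\TE,a,d}(\F_q)$. The condition $w_1^2\ne1$ in Proposition~\ref{P:TEDW1} only concerns the doubling formula \eqref{eq:A1}, so for the addition formula only $w_1w_2\ne 1$ is needed.

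\textbf{Step 1: only affine points occur.} By the discussion of the projective closure in \S2, when $\chi(d)=\chi(ad)=-1$ there are no $\F_q$-rational points above $\infty_1$ or $\infty_2$. Hence every $P\in\E_{\TE,a,d}(\F_q)$ is affine, $w(P)=dx^2y^2\in\F_q$ is finite, and $P_1\pm P_2$ are affine as well.

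\textbf{Step 2 (main step): $w_1w_2\ne1$.} We have $w_1w_2=d^2x_1^2y_1^2x_2^2y_2^2=(dx_1x_2y_1y_2)^2$, so $w_1w_2=1$ forces $dx_1x_2y_1y_2=\pm1$. But the addition law \eqref{eq:TE} has denominators $1\pm dx_1x_2y_1y_2$, and these are nonzero for all $\F_q$-points under $\chi(d)=\chi(ad)=-1$; this is the completeness result of \cite{BBLP,HWCD8} quoted in Theorem~\ref{thm:TED}.

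To keep the argument self-contained, I would reprove the nonvanishing along the lines of \cite{BBLP}. Suppose $\epsilon:=dx_1x_2y_1y_2=\pm1$. Then $x_i,y_i\ne0$. Using the curve equation for $P_1$ together with $x_2y_2=\epsilon/(dx_1y_1)$, one obtains
\[
d x_1^2y_1^2(ax_2^2+y_2^2)^2=d(x_1y_1)^2(1+dx_2^2y_2^2)^2=(ax_1^2+y_1^2)^2 .
\]
Similarly, writing $(ax_2^2+y_2^2)^2-4ax_2^2y_2^2=(\sqrt a\,x_2\pm y_2)^2(\cdots)$ and tracking the corresponding combination gives a square equal to $d$ or $ad$ times a nonzero square. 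That contradicts $\chi(d)=-1$ or $\chi(ad)=-1$.

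The delicate point in this step is the degenerate subcase where $ax_1^2+y_1^2=0$, or an analogous factor vanishes. In that subcase I would switch to the combination $ax_1^2-y_1^2$ and use $\chi(ad)=-1$; this is where both character conditions are genuinely needed.

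\textbf{Step 3: conclude.} With $w_1w_2\ne1$ established for all input pairs, Proposition~\ref{P:TEDW1} gives \eqref{eq:C1} directly, which is \eqref{eq:D01}. The computation in that proof only used the affine addition formulas \eqref{eq:TE} for $P_1+P_2$ and $P_2-P_1$, and Step 2 shows their denominators do not vanish. The identity therefore holds with no exceptional inputs, including $P_1=\pm P_2$, $P_1=\cO$, and points of order $2$ or $4$, so the formula is complete.
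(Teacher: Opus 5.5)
Your skeleton matches the paper's proof. Completeness of \eqref{eq:D01} reduces to $w_1w_2\neq1$, i.e.\ $dx_1x_2y_1y_2\neq\pm1$, which is exactly the nonvanishing of the denominators in \eqref{eq:TE}. Steps 1 and 3 are fine, including your remark that only $w_1w_2\neq1$ matters for \eqref{eq:C1}. Resting Step 2 on \cite{BBLP} is also legitimate, because $\chi(a)=\chi(d)\chi(ad)=1$ puts you in the case ``$a$ square, $d$ nonsquare''.

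The self-contained reproof you sketch, however, does not work, and it is precisely the content of the paper's proof. If $\epsilon=dx_1x_2y_1y_2=\pm1$, then $dx_2^2y_2^2=1/(dx_1^2y_1^2)$, so $d(x_1y_1)^2(1+dx_2^2y_2^2)^2=(ax_1^2+y_1^2)^2/(dx_1^2y_1^2)$. Your second equality is therefore off by the factor $dx_1^2y_1^2$. The correct relation, $ax_1^2+y_1^2=(ax_2^2+y_2^2)/(dx_2^2y_2^2)$, gives no contradiction with $\chi(d)=-1$ on its own.

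The missing idea is to use $\sqrt a\in\F_q$, which is available because $\chi(a)=1$. Add $\pm2\sqrt a\,x_1y_1=\pm2\sqrt a\,\epsilon/(dx_2y_2)$ to both sides of that relation; this completes the squares:
\[
(\sqrt a\,x_1\pm y_1)^2=\frac{1}{d}\left(\frac{\sqrt a\,x_2\pm\epsilon\,y_2}{x_2y_2}\right)^2 .
\]
Since $\chi(d)=-1$, both sides must vanish for both choices of sign. Hence $x_1=y_1=0$, contradicting $x_1y_1\neq0$.

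No degenerate subcase needs separate treatment via $ax_1^2-y_1^2$. The hypothesis $\chi(ad)=-1$ enters only through $\chi(a)=1$ here, and through the absence of points at infinity in your Step 1.
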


\begin{proof}
 If $P_1=P_2$ then  $w_1=w_2$ and $w_0=0$. So $w_3=w_4.  $
We need to show that the denominators are nonzero. 
Suppose that $w_1w_2-1=0$. Then $d^2x_1^2y_1^2x_2^2y_2^2=1$ and $x_iy_i \ne 0$. So $dx_1y_1x_2y_2=\pm 1$. 
If $x_1y_1=1/(dx_2y_2)$, then substituating into curve equation yields
$$ax_1^2+y_1^2=\dfrac{ax_2^2+y_2^2}{dx_2^2y_2^2}.$$
 Therefore,
 $$(\sqrt{a}x_1+y_1)^2=\dfrac{ax_2^2+y_2^2}{dx_2^2y_2^2}+2\sqrt{a}x_1y_1=\frac{1}{d}\dfrac{(\sqrt{a} x_2+y_2)^2}{(x_2y_2)^2}.$$
 Since $\chi(d)=-1$, this must reduce to $0=0$. So $\sqrt{a}x_1+y_1=0.$ 
 
 Similary, 
 $$(\sqrt{a}x_1-y_1)^2=\frac{1}{d}\dfrac{(\sqrt{a} x_2-y_2)^2}{(x_2y_2)^2}.$$
 Which implies that $\sqrt{a}x_1-y_1=0.$ Therefore, $x_1=y_1=0$. which is a contradiction.
 
 The case where $dx_1y_1x_2y_2=-1$ similary produces a contradiction. Therefore the differential addition and doubling formulas is always defined for points in $\E_{\TE,a,d}(F_q).~$ \qed
 \end{proof}

From equation \eqref{eq:D01} we obtain the following  explicit projective formula given by
\begin{equation*}\label{eq:PC1}
\begin{array}{c}
 \dfrac{W_3}{Z_3}+\dfrac{W_0}{Z_0}= \dfrac{ 2(W_1Z_2+W_2Z_1)(W_1W_2+Z_1Z_2)-4(e-2)W_1W_2Z_1Z_2 }  {(W_1W_2 -Z_1Z_2)^2}\enspace. 
\end{array}
\end{equation*}
In the next proposition, we show that if $\chi(d)=\chi(ad)=-1$, then this differential addition formula is complete.

If we set $Z_0=1$, then  the mixed projective
$w$-coordinates differential addition and doubling formulas have the total cost of $7\M+3\s+2\D $ as
follows:

\begin{equation}
\label{eq:PC02}
\begin{array}{c}
 A_1=(W_1+Z_1) ,\ B_1=(W_1-Z_1), \ E_1=A_1^2-B_1^2 , \vspace*{3pt}\\

A_2=W_1W_2 ,\ B_2=Z_1Z_2,\ E_2=(W_1+Z_1)(W_2+Z_2), \vspace*{3pt}\\
C=A_2+B_2,\ D=A_2-B_2, \vspace*{3pt}\\
W_4=E_1(A_1^2-(e/4)~E_1)~,~Z_4=A_1^2B_1^2, \vspace*{3pt}\\

W_3=C(2E_2-e\ C)-(w_0-e+2)D^2~,~Z_3=D^2 
\enspace. 
\end{array}
\end{equation}

Also,  the following  mixed projective
$w$-coordinates differential addition and doubling formulas have the total cost of $5\M+6\s+2\D $ :

\begin{equation}
\label{eq:PC2}
\begin{array}{c}
A_1=(W_1+Z_1) ,\ B_1=(W_1-Z_1), \ E_1=A_1^2-B_1^2 ,\vspace*{3pt}\\
A_2=W_1W_2 ,\ B_2=Z_1Z_2,\ E_2=(W_1+Z_1)(W_2+Z_2), \vspace*{3pt}\\
C=A_2+B_2,\ D=A_2-B_2, \vspace*{3pt}\\
W_4	= A_1^4-B_1^4+(1-e/2)E_1^2~,~Z_4= A_1^4+B_1^4-E_1^2, \vspace*{3pt}  \\
W_3=C(2E_2-e\ C)-(w_0-e+2)D^2~,~Z_3=D^2  \enspace. 
\end{array}
\end{equation}
\begin{proposition}\label{P:TEDW0001}
Fix a field  $\F_q$ with odd charactristic and $a,d \in \F_q $ such that $ad(a-d) \ne 0$ and $\chi(d)=\chi(ad)=-1$.
Define $w:\E_{\TE,a,d}(\F_q) \mapsto \F_q \cup  \{\infty \}$ as fallow : $w(x,y)=dx^2y^2$. 

Let $W_i,Z_i~(i=0,...,4)$  be elements of $\F_q$ and $e=4a/d$. Define
\begin{equation*}
\label{eq:PJ001}
\begin{array}{c}
W_4= 4W_1Z_1(~(W_1+Z_1)^2-eW_1Z_1), \vspace*{7pt}\\
Z_4= (W_1^2-Z_1^2)^2,~~~~~~~~~~~~~~~~~~~~~~\vspace*{7pt}\\
W_3= Z_0~(2(W_1Z_2+W_2Z_1)(W_1W_2+Z_1Z_2)-4(e-2)W_1W_2Z_1Z_2)-W_0(W_1W_2 -Z_1Z_2)^2,~\vspace*{7pt}\\
Z_3= Z_0~(W_1W_2 -Z_1Z_2)^2 .~\enspace 
\end{array}
\end{equation*}
a) Let $P_1$ be an element of $\E_{\TE,a,d}$ and  $w(P_1)=W_1/Z_1$, then $W_4/Z_4 \ne 0/0$ and $w(2P_1)=W_4/Z_4$. \\
b) Let $P_1$ and $P_2$ be two element of $\E_{\TE,a,d}$ and  $w(P_1)=W_1/Z_1 ,w (P_2)=W_2/Z_2$,  then $W_3/Z_3 \ne 0/0$ and $w(P_1+P_2)=W_3/Z_3$. 

Here $W/Z$ means the quotient of $W$ and $Z$ in $\F_q$  if $Z \ne 0$; it means $\infty$ if $W \ne 0$ and $Z=0$; it is undefined if $W=Z=0$.
\end{proposition}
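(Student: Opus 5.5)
Under the hypothesis $\chi(d)=\chi(ad)=-1$, the curve $\E_{\TE,a,d}$ has no points at infinity over $\F_q$. So every point $P=(x,y)$ is affine, $w(P)=dx^2y^2\in\F_q$ is finite, and we may take $Z_i\ne 0$ for $i=0,1,2$. I would first normalise to $Z_i=1$, i.e.\ $W_i=w_i$. Since the defining polynomials are homogeneous in each pair $(W_i,Z_i)$, the general case follows by scaling.

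\emph{Part a).} Here $Z_4=(w_1^2-1)^2$. I first show $w_1\ne\pm1$, which gives $Z_4\ne0$ and hence $W_4/Z_4\ne 0/0$; the identity $w(2P_1)=W_4/Z_4$ is then exactly \eqref{eq:A1} of Proposition~\ref{P:TEDW1}. Suppose $dx_1^2y_1^2=\pm1$. By the argument in the proof of Proposition~\ref{P:TEDW001} applied with $P_2=P_1$, the case $w_1=1$ is impossible. Alternatively, $w_1^2=1$ is exactly the vanishing of a denominator $1\pm dx_1^2y_1^2$ in the doubling law \eqref{eq:TE}, and that law is exceptional-free when $\chi(d)=\chi(ad)=-1$ by Theorem~\ref{thm:TED}. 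I will state this cleanly: $dx_1^2y_1^2=1$ forces $d$ to be a square, and $dx_1^2y_1^2=-1$ combined with the curve equation forces $ad$ to be a square via the standard completeness argument of \cite{BBLP}.

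\emph{Part b).} Here $Z_3=(w_1w_2-1)^2$, and $w_1w_2\ne1$ is precisely what Proposition~\ref{P:TEDW001} establishes. Hence $Z_3\ne0$, and $W_3/Z_3$ is a genuine field element. By \eqref{eq:C1} of Proposition~\ref{P:TEDW1}, whose hypotheses hold by part a) and Proposition~\ref{P:TEDW001},
\[
w_3=\frac{2(w_1+w_2)(w_1w_2+1)-4(e-2)w_1w_2}{(w_1w_2-1)^2}-w_0 .
\]
Clearing denominators gives the stated $W_3,Z_3$. Note that $P_1=P_2$ is allowed: then $w_0=w(\cO)=0$, and the formula still yields $w(2P_1)$ because \eqref{eq:C1} was derived from the general addition law \eqref{eq:TE} without assuming $P_1\ne P_2$.

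\emph{Main obstacle.} The main difficulty is showing $w_1\ne -1$ in part a), and more generally $w_1w_2\ne -1$ if \eqref{eq:C1} needs it through the denominators $1+dx_1x_2y_1y_2$ of \eqref{eq:TE}. I would handle this as in Proposition~\ref{P:TEDW001}. Suppose $dx_1y_1x_2y_2=\pm\sqrt{-1}$ or $d x_1^2y_1^2=-1$. Substitute into $ax^2+y^2=1+dx^2y^2$ to get $ax_1^2+y_1^2=0$; since $\chi(a)=-\chi(d)\cdot\chi(ad)\cdot$\dots, this reduces to $-a$ being a square. Combined with $dx_1^2y_1^2=-1$, this makes $ad$ a square, contradicting $\chi(ad)=-1$. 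Verifying this square-class bookkeeping carefully is the step I expect to require the most care.
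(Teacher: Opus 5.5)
Your proof is correct and follows the paper's structure. Because $\chi(d)=\chi(ad)=-1$, every $\F_q$-point is affine, so $Z_i\ne0$. Part b) then follows from $w_1w_2\ne1$ (Proposition~\ref{P:TEDW001}) together with \eqref{eq:C1}, as in the paper.

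The real difference is in part a).
\begin{itemize}
\item The paper only shows $(W_4,Z_4)\ne(0,0)$. It does this by a purely algebraic case split: it treats $W_1=0$ by listing the points with $x_1y_1=0$, and rules out $W_1=\pm Z_1$ using $a\ne d$. It then quotes Proposition~\ref{P:TEDW1} without checking that proposition's hypothesis $w_1^2\ne1$.
\item You prove $w_1\ne\pm1$ from the character conditions. If $w_1=1$, then $d=(x_1y_1)^{-2}$ is a square. If $w_1=-1$, the curve equation gives $y_1^2=-ax_1^2$, hence $ad=x_1^{-4}$ is a square.
\item This yields the stronger statement $Z_4\ne0$. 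It supplies exactly the hypothesis the paper leaves unchecked, and it handles the case $w_1=0$ uniformly through \eqref{eq:A1}.
\item You did not need a separate argument for $w_1=-1$. Proposition~\ref{P:TEDW001} is stated for all pairs of points, including $P_2=P_1$. Its proof rules out $dx_1y_1x_2y_2=\pm1$, so it already gives $w_1^2=w_1w_2\ne1$ for both signs.
\end{itemize}

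Delete your ``main obstacle'' paragraph, except for the single-point case $dx_1^2y_1^2=-1$.
\begin{itemize}
\item The denominators $1\pm dx_1x_2y_1y_2$ of \eqref{eq:TE} vanish only when $w_1w_2=1$. Nothing in \eqref{eq:A1} or \eqref{eq:C1} requires $w_1w_2\ne-1$.
\item That condition is also false in general. Over $\F_{13}$ with $a=1$ and $d=2$, we have $\chi(d)=\chi(ad)=-1$. The point $(4,4)$ lies on the curve and has $w=5$, and $5^2\equiv-1$. So $P_1=P_2=(4,4)$ gives $w_1w_2=-1$.
\item Your sketch for excluding it also fails. Substituting $dx_1y_1x_2y_2=\pm\sqrt{-1}$ into the curve equation does not yield $ax_1^2+y_1^2=0$; that step only works when $dx_1^2y_1^2=-1$ for a single point.
\end{itemize}
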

\begin{proof}
Since $\chi(d)=\chi(ad)=-1$, the set of $\F_q$-rational projective points of $\E_{\TE,a,d}$ is the set of $\F_q$-rational affine points which form a group. So for every point $P$, we have $Z \ne 0$. 

a) If $W_1~Z_1 = 0$, then $W_1=0$, so $Z_4=Z_1^4 \ne 0 $ and $W_4=0$. So $(W_4:Z_4) = (0:1)$ and  $W_4/Z_4 \ne 0/0$ as claimed.
From  $W_1=0$ we have $x_1=0$ or $y_1=0$. So $P_1=(0,\pm1)$ or $P_1=(\pm 1/\sqrt{a},0)$. So $2P_1= \cO $ or $2P_1=(0,-1)$ so $w(2P_1)=(0:1)$ as claimed.

If $W_1~Z_1 \ne 0$ and $W_4/Z_4=0/0$, then we have $(W_1+Z_1)^2-eW_1Z_1=0$ and $W_1^2-Z_1^2=0$.
From $W_1^2-Z_1^2=0$ we have $W_1=\pm Z_1$.
 Now by substitution $W_1=-Z_1$ in equation $(W_1+Z_1)^2-eW_1Z_1=0$ we have $W_1Z_1=0$, contradicting the hypothesis that $W_1Z_1 \ne 0$. On the other hands, if $W_1=Z_1$, then we have $(a-d)W_1=0$, contradicting the hypothesis that $W_1\ne 0$ and $ad(a-d) \ne 0$. So  $W_4/Z_4 \ne 0/0$ as claimed. We  know that, the point of $P_1$ is affine point, so from proposition \ref{P:TEDW1}  we have  $w(2P_1)=W_4/Z_4$ as claimed.

b) Assume that $Z_3 =0 $, so  $Z_0(W_1W_2-Z_1Z_2)=0$. So $(W_1W_2-Z_1Z_2)=0$. We know that, $P_1$ and $P_2$ are affine points, so $Z_1 Z_2\ne 0$, hence $W_1W_2 \ne 0$. Then $(W_1/Z_1)~(W_2/Z_2)-1=0$. So $w(P_1)w(P_2)-1=0$, where from proposition \ref{P:TEDW001} is a contradiction. So $W_3/Z_3 \ne 0/0$ as claimed. Also the points of $P_1$ and $P_2$ are affine points, so from proposition \ref{P:TEDW001}  we have  $w(P_1+P_2)=W_3/Z_3$. 
~~~ \qed
\end{proof}
Assume that, $p=2^{255}-19$, $a=486664$ and $d=486660$. The twisted Edwards curve $\E_{\TE,a,d}$  is a complete Edwards curve over $\F_p$ of order $8\ell$, where $\ell$ is the prime 
\begin{gather*}
\ell=72370055773322622139731865630429942408\\
\qquad  57116359379907606001950938285454250989.  
\end{gather*}

The cost of the mixed differential addition and doubling formulas \eqref{eq:P6} is
$3\M+6\s+3\D$.


\begin{remark}
	Let $\E_{\TE,a,d}$ be a complete twisted Edwards curve over $\F_q$ with $\chi(d)=\chi(ad)=-1$. Then, $\E_{\TE,a,d}$ has the four torsion subgroup as 
	$$\E_{\TE,a,d}(\F_q)[4]=\{(0,1), (0,-1), (1/\sqrt{a}, 0), (-1/\sqrt{a}, 0) \}.$$ 
	Then the coset of the point $P=(x,y)$ on the curve up to this subgroup equals  
	$$P+\E_{\TE,a,d}(\F_q)[4]=\{(x,y), (-x,-y),  (y/\sqrt{a}, -x\sqrt{a}), (-y/\sqrt{a}, x\sqrt{a}) \}.$$ 
	We note that the proposed $w$-function has the property that $w(Q)=w(P)$ for all points $Q$ in the coset of $P$. This differential addition formulas could be applicable with the eliminating cofactors technique through point compression \cite{H15}
\end{remark}


\paragraph{{\bf Retrieving $x$ and $y$ from $w$-Coordinates.}}
Assume taht $P=(x_1,y_1),Q=(x_2,y_2),Q-P=(x_0,y_0), w(P)=w_1,w(Q)=w_2$ and $w(Q-P)=w_0$. 
From doubling formula \ref{eq:TE} and $w$-function $w(x,y)=dx^2y^2$,  we have 
 \begin{equation}\label{eq:4kP}
4(x_1,y_1)=(\dfrac{4x_1y_1(ax_1^2-y_1^2)(w_1^2-1)}{4w_1(w_1+1)^2-4ew_1^2+(w_1^2-1)^2},
\dfrac{2ew_1(w_1^2+1)-(w_1+1)^4}{4w_1(w_1+1)^2-4ew_1^2-(w_1^2-1)^2} )
\end{equation}
Where $e=4a/d$.

Also, if $x_0y_0(ax_0^2-y_0^2) \ne 0$ then
\begin{equation}\label{eq:Recovery}
x_1y_1(ax_1^2-y_1^2)=\dfrac{ w_2(w_1w_0-1)^2 -w_0^2-w_1^2-(4-8a/d)w_0w_1-2}{2dx_0y_0(ax_0^2-y_0^2)}
\end{equation}

So, one can use this formula to recover $4P$ given $Q-P,w(P),w(Q)$.
In particular, we can recover $4kP$ given $P,w(kP),w((k+1)P)$. So the Montgomery ladder can be used not only to compute $w(kP)$ given $w(P)$, but also to compute $4kP$ given $P$. Algorithm \ref{J:Recovery} describes the Montgomery ladder approach for point multiplication of $4kP$.

\begin{algorithm}
\caption{\; Recovering $4kP$ from $w$-coordinates dADD}\label{J:Recovery}
\begin{algorithmic}[1]
 \Statex {\bf Inputs :}
 {Elliptic curve  $\E_{\TE,a,d}/F_{q}$,~ Point $P=(x,y) $ on $\E_{\TE,a,d}$ } and integer k
 \Statex {\bf Output : $Q=4kP$} 
 \Statex {}
 \State  set: $w_0=dx^2y^2$
   \State compute: $w_1\leftarrow w(kP), w_2\leftarrow w((k+1)P) $ \Comment{dADD }
   \State Compute: $x_1y_1(ax_1^2-y_1^2)$ from Eq.\ref{eq:Recovery}  \Comment{$kP=(x_1,y_1)$ }
   \State Compute $4kP$ from Eq. \ref{eq:4kP}  
   \State \Return $Q=4kP$

\end{algorithmic}
\end{algorithm}

Let $k=(k_{m-1},\cdots, k_1,k_0)$ and $k'=(k_{m-1},\cdots, k_2)$, then  $k=4k'+2k_1+k_0$. Scaler multiplication can be modified  as in Algorithm \ref{J:Recovery2} for computing $kP$ when given $P,w(k'P)$ and $w((k'+1)P)$.

\begin{algorithm}
\caption{\; Retrieving  $kP$ from $w$-coordinates dADD}\label{J:Recovery2}
\begin{algorithmic}[1]
 \Statex {\bf Inputs :} \Statex {\hspace{32pt}  Elliptic curve  $\E_{\TE,a,d}/F_q$}\Comment{Twisted Edwards curve over $F_q$ }
\Statex {\hspace{32pt} $P=(x,y) \in \E_{\TE,a,d}$  } 
 \Statex {\hspace{32pt} $ k=(k_{m-1},\cdots, k_1,k_0)$} \Comment{$k$ is a positive integer}

 \Statex {\bf Output : $Q=kP$} 
 \Statex {}
 \State set: $k'=(k_{m-1},\cdots, k_2)$
 \State  set: $w_0=dx^2y^2$  
   \State compute: $w_1\leftarrow w(k'P), w_2\leftarrow w((k'+1)P) $ \Comment{dADD }
   \State Compute: $x_1y_1(ax_1^2-y_1^2)$ from Eq.\ref{eq:Recovery}  \Comment{$k'P=(x_1,y_1)$ }
   \State Compute $4k'P$ from doubling formula \ref{eq:4kP}  
  \State set: $Q'=4k'P $  
  \State set: {$Q=Q'+(2k_1+k_0)P$}\Comment{$k=4k'+2k_1+k_0$ }

   \State \Return $Q$

\end{algorithmic}
\end{algorithm}

\begin{proposition}\label{P:TEDW2}
	Let $w$ be the function on twisted Edwards curve  $\E_{\TE,a,d}$ over $\F_q$ given by $w(x,y)=ax^2/y^2$. Let $P_1$ , $P_2$ be two points of twisted Edwards curve  $\E_{\TE,a,d}$. Consider the $w$-coordinate dADD for $P_1$ , $P_2$. If  $w_1w_2 \ne 1 $ and $w_1^2 \ne 1$, then we have 
\begin{equation*}\label{eq:D1}
	w_4	= \dfrac{4w_1((w_1+1)^2-ew_1)}{(w_1^2-1)^2 }~,
	\end{equation*}
	
	\begin{equation*}
	\label{eq:B1}
	w_3w_0 = \dfrac{(w_1-w_2)^2}{(w_1w_2-1)^2}~,
	\end{equation*}
and
\begin{equation*}
\label{eq:C01}
w_3+w_0 = \dfrac{2(w_1+w_2)(w_1w_2+1)-4(e-2)w_1w_2}{(w_1w_2-1)^2},
\end{equation*}

	where $e=4d/a$.
\end{proposition}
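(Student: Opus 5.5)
The cleanest route is to reduce this statement to Proposition~\ref{P:TEDW1} through the birational equivalence used in the proof of Theorem~\ref{thm:TED}. The map $\phi:(x,y)\mapsto(x,1/y)$ sends $\E_{\TE,a,d}$ to $\E_{\TE,d,a}$: dividing $ax^2+y^2=1+dx^2y^2$ by $y^2$ gives $d x^2 + (1/y)^2 = 1 + a x^2 (1/y)^2$. In $\mathbb{P}^3$ this is the coordinate exchange $(X:Y:T:Z)\mapsto(T:Z:X:Y)$. On the target curve $\E_{\TE,d,a}$ (parameters swapped, so $a'=d$, $d'=a$), the function of Proposition~\ref{P:TEDW1} is $w'(x',y')=d'x'^2y'^2=a x^2/y^2$. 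This is exactly the present $w$, i.e.\ $w=w'\circ\phi$. The constant of Proposition~\ref{P:TEDW1} on the target curve is $e'=4a'/d'=4d/a$, which matches the stated $e$.

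The key step is to check that $\phi$ is a group homomorphism, so that $\phi(P_1\pm P_2)=\phi(P_1)\pm\phi(P_2)$ and $\phi(2P_1)=2\phi(P_1)$. First, $\phi$ fixes the identity $(0,1)$. Second, it commutes with negation, since $(-x,y)\mapsto(-x,1/y)$. Since $\phi$ is a morphism of elliptic curves (as an isomorphism in the $\mathbb{P}^3$ model) mapping $\cO$ to $\cO$, it is an isomorphism of groups. As an alternative check one can verify directly that \eqref{Eq:TEH2} is \eqref{Eq:TEH1} with variables exchanged, which is precisely what the proof of Theorem~\ref{thm:TED} records. With this in hand, $w_i=w'(\phi(\cdot))$ for the corresponding points, $i=0,\dots,4$.

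Applying Proposition~\ref{P:TEDW1} to $\phi(P_1),\phi(P_2)$ on $\E_{\TE,d,a}$ gives all three identities verbatim with $e=4d/a$. The hypotheses $w_1w_2\ne1$ and $w_1^2\ne1$ transfer unchanged, and $ad(a-d)\neq 0$ is symmetric in $a,d$.

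The only delicate point is the domain of $\phi$. Points with $y=0$, namely $(\pm1/\sqrt a,0)$, map to points at infinity, where $w=\infty$. For such points one either treats them projectively, or notes that the formulas are then to be read with $w_1=\infty$, outside the affine statement. I expect this bookkeeping to be the main obstacle, in the sense of stating cleanly which inputs the proposition covers.

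As a fallback, the identities can be verified directly, following the proof of Proposition~\ref{P:TEDW1}. For doubling one computes $ax_4^2/y_4^2=4ax_1^2y_1^2(1-dx_1^2y_1^2)^2/\bigl((1+dx_1^2y_1^2)^2(y_1^2-ax_1^2)^2\bigr)$, and then eliminates via the curve equation using $w_1=ax_1^2/y_1^2$.
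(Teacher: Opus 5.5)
Your proposal is correct and follows the paper's proof: both reduce to Proposition~\ref{P:TEDW1} on $\E_{\TE,d,a}$ via $(x,y)\mapsto(x,1/y)$, which turns $ax^2/y^2$ into $d'x'^2y'^2$ with $e'=4a'/d'=4d/a$. You additionally state that this map is a group isomorphism fixing $(0,1)$, so that $\phi(P_2-P_1)=\phi(P_2)-\phi(P_1)$ and $\phi(2P_1)=2\phi(P_1)$; the paper's one-line argument uses this without saying it.
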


\begin{proof}
From theorem \ref{thm:TED}, the twisted Edwards curve 
$\E_{\TE,a,d}$ is birationally equivalent to $\E_{\TE,d,a}$ via the map $(x,y) \rightarrow (x,1/y)$. So $w(x,y)=ax^2/y^2$ is equivalent to $w(x,y)=dx^2y^2$. \qed
\end{proof}
Similarly, we obtain the same projective and mixed $w-$coordinates formulas as \eqref{eq:P1}, \eqref{eq:P2}, \eqref{eq:P4} and \eqref{eq:P6}. 

\begin{proposition}\label{P:TEDW02}
Let $a,d$ be elements of $\F_q$ such that $ad(a-d) \ne 0$ and $\chi(a)=\chi(ad)=-1$.  
	Let $w$ be the function on twisted Edwards curve over $\F_q$ given by $w(x,y)=ax^2/y^2$. The following differential addition and doubling formula is complete.
\begin{gather*}\label{eq:D02}
	w_4	= \dfrac{4w_1((w_1+1)^2-ew_1)}{(w_1^2-1)^2 }~,~w_3+w_0 = \dfrac{2(w_1+w_2)(w_1w_2+1)-4(e-2)w_1w_2}{(w_1w_2-1)^2}~.
\end{gather*}
	where $e=4d/a$.
\end{proposition}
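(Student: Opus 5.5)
The plan is to reduce this proposition to Proposition~\ref{P:TEDW001}, with the roles of $a$ and $d$ swapped, using the map from the proof of Theorem~\ref{thm:TED}. Let $\phi:(x,y)\mapsto(x,1/y)$. It sends $\E_{\TE,a,d}$ to $\E_{\TE,d,a}$: dividing $ax^2+y^2=1+dx^2y^2$ by $y^2$ gives $d x^2 + (1/y)^2 = 1 + a x^2 (1/y)^2$. In projective $\mathbb{P}^3$ coordinates $\phi$ is the linear permutation $(X:Y:T:Z)\mapsto(T:Z:X:Y)$, so it is an isomorphism of curves. It fixes the identity $(0:1:0:1)$, so it is a group isomorphism. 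The $w$-function pulls back correctly: on $\E_{\TE,d,a}$ the function $w'(u,v)=a u^2 v^2$ (the analogue of $dx^2y^2$, where the coefficient of the $X^2Y^2$ term is now $a$) satisfies $w'(\phi(x,y))=ax^2/y^2=w(x,y)$. The parameter $e'=4\cdot(\text{first coefficient})/(\text{second coefficient})$ for $\E_{\TE,d,a}$ is $4d/a$, which is exactly the $e$ in the statement.

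The hypotheses also transfer. On $\E_{\TE,d,a}$ the role of ``$d$'' is played by $a$, and $\chi(a)=-1$, $\chi(ad)=-1$ are precisely the conditions $\chi(d')=\chi(a'd')=-1$ required in Proposition~\ref{P:TEDW001} and Proposition~\ref{P:TEDW0001} for $a'=d$, $d'=a$. So $\E_{\TE,d,a}$ is complete and has no points at infinity. Hence $\E_{\TE,a,d}(\F_q)$ also consists only of affine points with $y\neq0$: a rational point with $y=0$ would need $ax^2=1$, which is impossible since $\chi(a)=-1$. Therefore $w=ax^2/y^2$ is defined everywhere, and $\phi$ is a bijection between the rational point groups.

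With this in place, for $P_1,P_2\in\E_{\TE,a,d}(\F_q)$ set $Q_i=\phi(P_i)$. Then $w(P_1+P_2)=w'(Q_1+Q_2)$, $w(P_2-P_1)=w'(Q_2-Q_1)$, and $w(2P_1)=w'(2Q_1)$. The addition formula for $w_3+w_0$ then follows from Proposition~\ref{P:TEDW001} applied to $\E_{\TE,d,a}$, and the doubling formula follows from Proposition~\ref{P:TEDW1} (equivalently Proposition~\ref{P:TEDW2}).

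The main obstacle is completeness, meaning the denominators never vanish. For addition, I need $w_1w_2\ne1$. This is exactly the content of the proof of Proposition~\ref{P:TEDW001} transported through $\phi$, so nothing new is required. For doubling, the earlier proposition does not cover this, so I must show $w_1^2\ne1$ directly. Suppose $a^2x^4=y^4$. Then $ax^2=\pm y^2$. The case $ax^2=y^2$ with $y\ne0$ forces $\chi(a)=1$, which is excluded unless $x=y=0$, and that point is not on the curve. So $ax^2=-y^2$. Substituting into the curve equation gives $0=1+dx^2y^2=1-(d/a)y^4$, hence $y^4=a/d$. Then $a/d$ would be a square, i.e. $\chi(ad)=1$, a contradiction. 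So the doubling denominator $(w_1^2-1)^2$ is never zero. Alternatively, the Proposition~\ref{P:TEDW0001}(a) argument for $W_4/Z_4\ne0/0$ carries over verbatim. Either way both formulas hold for all rational inputs, which proves completeness.
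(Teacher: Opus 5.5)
Your reduction is correct in substance, but it takes a different route from the paper's proof.

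\textbf{How the two routes differ.} The paper uses the map $(x,y)\mapsto(x,1/y)$ only to obtain the formulas (Proposition~\ref{P:TEDW2}). For completeness it works directly on $\E_{\TE,a,d}$ in affine coordinates:
\begin{itemize}
\item it proves $w_1^2\neq1$ by a case split on $\chi(-1)$;
\item it proves $w_1w_2\neq1$ by repeating the computation of Proposition~\ref{P:TEDW001} with $1\pm\sqrt d\,x_1y_1$ in place of $\sqrt a\,x_1\pm y_1$, which uses $\chi(d)=1$.
\end{itemize}
You instead transport everything through the linear isomorphism $(X:Y:T:Z)\mapsto(T:Z:X:Y)$ of the $\mathbb{P}^3$ models. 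The addition part then becomes literally Proposition~\ref{P:TEDW001} for $\E_{\TE,d,a}$, with no recomputation. Your doubling check ($ax^2=-y^2$ forces $y^4=a/d$) is also shorter than the paper's case split. Your route covers more, too: every rational point of $\E_{\TE,d,a}$ is affine, so the transported statement applies to every rational point of $\E_{\TE,a,d}$, including points at infinity. The paper's affine computation never addresses those points.

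\textbf{One false step.} It is not true that $\E_{\TE,a,d}(\F_q)$ consists only of affine points. Here $\chi(d)=\chi(ad)/\chi(a)=1$, so $(0:\pm\sqrt d:1:0)$ are rational points of $\E_{\TE,a,d}$. They are the images under $\phi^{-1}$ of the affine points $(\pm1/\sqrt d,0)$ of $\E_{\TE,d,a}$. The fact that $\E_{\TE,d,a}$ has no points at infinity only rules out affine points with $y=0$ on $\E_{\TE,a,d}$.

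Consequences and fixes:
\begin{itemize}
\item The function $w$ must be read as $aX^2/Y^2=w'\circ\phi$, which equals $0$ at those two points.
\item Your direct affine verification of $w_1^2\neq1$ skips those two points. The fix is one line, since $w_1=0$ there. Alternatively, run the doubling check for $w'=au^2v^2$ on $\E_{\TE,d,a}$, where all points are affine.
\item Drop the ``alternatively'' via Proposition~\ref{P:TEDW0001}(a). Knowing $W_4/Z_4\neq0/0$ does not give $(w_1^2-1)^2\neq0$.
\end{itemize}
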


\begin{proof}
 Note that, If $P_1=P_2$, then $w_1=w_2$ and $w_0=0$. hence $w_3=w_4$. So differential addition formula is unified. 
We need to show that the denominators are nonzero. Suppose $w_1^2-1=0$. So $ax_1^2/y_1^2=\pm 1$ and $x_1y_1 \ne 0$. If $ax_1^2/y_1^2=1$, then $\chi(a)=1$, which is a contradiction. If $ax_1^2/y_1^2=-1$, then from curve equation we have $1+dx_1^2y_1^2=0$, so $dx_1^2y_1^2=-1$. If $\chi(-1)=1$ then we have $\chi(a)=1$,  which is a contradiction. If $\chi(-1)=-1$ then we have $\chi(d)=-1$,  which is a contradiction.  So $w_1^2-1 \ne 0$.

Now suppose that $w_1w_2-1=0$. Then $a^2x_1^2x_2^2/(y_1^2y_2^2)=1$ and $x_iy_i \ne 0$. So $ax_1x_2/y_1y_2=\pm 1$. 
Let $ax_1x_2/y_1y_2=1$,so $x_1=y_1y_2/ax_2$. Substituating into curve equation yields
$$1+dx_1^2y_1^2=\dfrac{y_1^2(1+dx_2^2y_2^2)}{ax_2^2}.$$
 Therefore,
 $$(1+\sqrt{d}x_1y_1)^2=\dfrac{y_1^2(1+dx_2^2y_2^2)}{ax_2^2}+2\sqrt{d}x_1y_1=\frac{1}{a}\left(\dfrac{y_1(1+\sqrt{d} x_2y_2)}{x_2}\right)^2.$$
 Since $\chi(a)=-1$, this must reduce to $0=0$. So $1+\sqrt{d}x_1y_1=0.$ 
 
 Similary, 
 $$(1-\sqrt{d}x_1y_1)^2=\frac{1}{a}\left(\dfrac{y_1(1-\sqrt{d} x_2y_2)}{x_2}\right)^2.$$
 Which implies that $1-\sqrt{d}x_1y_1=0.$ Therefore, $x_1=y_1=0$. which is a contradiction.
 
 The case where $ax_1x_2/y_1y_2=-1$ similary produces a contradiction. Therefore the differential addition and doubling formulas is always defined for points in $\E_{\TE,a,d}(F_q).$ \qed
 \end{proof}
\begin{proposition}\label{P:TEDW5}
	Let $w$ be the function on twisted Edwards curve  $\E_{\TE,a,d}$ over $\F_q$ given by $w(x,y)=\sqrt{ad}\; x^2$. Let $P_1$ , $P_2$ be two points of twisted Edwards curve  $\E_{\TE,a,d}$. Consider the $w$-coordinate dADD for $P_1$ , $P_2$. If  $w_1w_2 \ne 1 $ and $w_1^2 \ne 1$, then we have 
	\begin{equation}\label{eq:TA2}
	w_4	= \dfrac{4w_1((w_1+1)^2-ew_1)}{(w_1^2-1)^2 }~~~,~~~
	w_3w_0 = \dfrac{(w_1-w_2)^2}{(w_1w_2-1)^2}~.
	\end{equation}
where $e=\dfrac{(d+\sqrt{ad})^2}{d\sqrt{ad}}$.
\end{proposition}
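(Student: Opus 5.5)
The plan is to prove both identities in \eqref{eq:TA2} by direct computation in affine coordinates, as in the proof of Proposition~\ref{P:TEDW1}. The key device is to eliminate $y^2$ using the curve equation: on $\E_{\TE,a,d}$ we have $y^2=(1-ax^2)/(1-dx^2)$ wherever $dx^2\ne 1$. Write $s=\sqrt{ad}$ and $u_i=x_i^2$, so that $w_i=su_i$ and $w_i^2=ad\,u_i^2$. First note that $w(-P)=w(P)$, since negation only changes the sign of $x$. Then $w$ is a legitimate $w$-coordinate, and $w_0=w(P_2-P_1)$ can be computed from $P_2+(-P_1)$.

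\textbf{Doubling.} By \eqref{eq:TE}, $x_4^2=4x_1^2y_1^2/(1+dx_1^2y_1^2)^2$. Substituting $y_1^2$ gives
\[
1+du_1y_1^2=\frac{1-ad\,u_1^2}{1-du_1}=\frac{1-w_1^2}{1-du_1},
\qquad
x_1^2y_1^2=\frac{u_1(1-au_1)}{1-du_1}.
\]
Hence $x_4^2=4u_1(1-au_1)(1-du_1)/(1-w_1^2)^2$. Multiplying by $s$ and expanding $(1-au_1)(1-du_1)=1-(a+d)u_1+w_1^2$ yields
\[
w_4=\frac{4w_1\bigl(1+w_1^2-\tfrac{a+d}{s}w_1\bigr)}{(w_1^2-1)^2}.
\]
It then remains to check that $e-2=(a+d)/s$ for the stated $e$. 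Indeed, $(d+s)^2/(ds)=(d^2+2ds+ad)/(ds)=d/s+2+a/s$, and with this $1+w_1^2+(2-e)w_1=(w_1+1)^2-ew_1$, which is the claimed formula.

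\textbf{Differential addition.} From \eqref{eq:TE}, $x_3=(x_1y_2+x_2y_1)/(1+dx_1x_2y_1y_2)$ and $x_0=(x_2y_1-x_1y_2)/(1-dx_1x_2y_1y_2)$. Their product is
\[
x_3x_0=\frac{x_2^2y_1^2-x_1^2y_2^2}{1-d^2x_1^2x_2^2y_1^2y_2^2}.
\]
Substitute $y_i^2$ and clear the common denominator $(1-du_1)(1-du_2)$. The numerator becomes $(u_2-u_1)\,(1-d(u_1+u_2)+ad\,u_1u_2)$. The denominator becomes $(1-du_1)(1-du_2)-d^2u_1u_2(1-au_1)(1-au_2)$. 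The crucial step is to verify that this denominator factors as
\[
(1-ad\,u_1u_2)\,\bigl(1-d(u_1+u_2)+ad\,u_1u_2\bigr),
\]
which follows by expanding both sides. Cancelling the common factor gives $x_3x_0=(u_2-u_1)/(1-w_1w_2)$, and therefore $w_3w_0=ad\,x_3^2x_0^2=(w_1-w_2)^2/(w_1w_2-1)^2$.

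\textbf{Main obstacle.} I expect the main obstacle to be justifying these cancellations, not the algebra itself. For doubling, we need $1-du_1\ne0$ and the denominators in \eqref{eq:TE} to be nonzero. For addition, we also need the factor $1-d(u_1+u_2)+ad\,u_1u_2$ to be nonzero. The hypotheses $w_1^2\ne1$ and $w_1w_2\ne1$ control exactly the denominators that survive. I would argue that the identities hold as identities of rational functions on the curve. They then specialize to every point where both sides are defined, which is the same level of rigor as Proposition~\ref{P:TEDW1}; the exceptional points (those with $dx^2=1$, or where a factor vanishes) can be handled by continuity or by a separate check in projective coordinates if needed.
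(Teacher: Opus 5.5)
Your proof is correct, and it takes a genuinely different route from the paper. The paper does no computation on the Edwards curve. It asserts that $\phi(x,y)=\bigl(1/(\sqrt{ad}\,x^2),\,(a-d)^2y/(4ad\,x(y^2-1))\bigr)$ is an isogeny onto the Montgomery curve with $A=-(a+d)/\sqrt{ad}$ and $B=16\sqrt{ad}/(a-d)^2$, so that $1/w$ is the pullback of the Montgomery $X$-coordinate. The classical formulas $X_4=(X_1^2-1)^2/\bigl(4X_1(X_1^2+AX_1+1)\bigr)$ and $X_3X_0=(X_1X_2-1)^2/(X_1-X_2)^2$ (i.e.\ $e=2-A$) then transfer through the homomorphism $\phi$, and substituting $X_i=1/w_i$ gives the stated forms.

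That route is short and explains structurally why the Montgomery shape appears and why $e-2=(a+d)/\sqrt{ad}$. However, it uses the Montgomery law as a black box. It also leaves to the reader the check that $\phi$ lands on that curve and sends $(0,1)$ to the point at infinity.

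Your argument is self-contained and needs only \eqref{eq:TE} and the curve equation. It isolates the one nontrivial identity, the factorization of $(1-du_1)(1-du_2)-d^2u_1u_2(1-au_1)(1-au_2)$, which shows exactly why the surviving denominator is $w_1w_2-1$. Both of your factorizations check out.

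You can also shorten your ``obstacle'' discussion:
\begin{itemize}
\item $1-dx^2\neq0$ holds at every affine point, since $dx^2=1$ in the curve equation forces $ax^2=1$ and hence $a=d$.
\item Consequently $w_1^2\neq1$ is exactly the condition $1+dx_1^2y_1^2\neq0$ that the $x_4$ denominator is nonzero.
\item Given $w_1w_2\neq1$, the extra factor $1-d(u_1+u_2)+ad\,u_1u_2$ vanishes precisely when $1-d^2x_1^2x_2^2y_1^2y_2^2=0$. That is where the affine law for $P_1+P_2$ or $P_2-P_1$ itself breaks down, and your density argument is the right tool there.
\end{itemize}
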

\begin{proof}
The twisted Edwards curve $\E_{\TE,a,d}$ and Montgomery curve $\M_{A,B}$ where $A=-(a+d)/\sqrt{ad},\, B=16\sqrt{ad}/(a-d)^2$ are isogenous 
by the map\\
\begin{gather*}\label{TEMo2}
\phi(x,y)=(\dfrac{1}{\sqrt{ad}\;x^2},\dfrac{(a-d)^2y}{4ad\; x(y^2-1)}).
\end{gather*}
But the Montgomery curve $\M_{A,B}$ has dADD-M formulas with the function $w$ by $w(X,Y)=X$ and $e=2-A$. 
 So from map $\phi$ we have $w(x,y)=\dfrac{1}{\sqrt{ad}\; x^2}$ . Then, for function $\omega=1/w$ on twisted Edwards curve $TE_{a,d}$ we have $w(x,y)=\sqrt{ad}\; x^2$ and $e=\dfrac{(d+\sqrt{ad})^2}{d\sqrt{ad}}$. \qed
\end{proof}

Furthermore, for twisted Edwards curves where $\chi(ad)=1$, we define another differential formulas by the rational function $w$ by $w(x,y)=\sqrt{ad}\left(\dfrac{2xy}{1+dx^2y^2}\right)^2$.
\begin{proposition}\label{P:TEDW3}
	Let $w$ be a function on twisted Edwards curve  $\E_{\TE,a,d}$ over $\F_q$ given by $w(x,y)=\sqrt{ad}\left(\dfrac{2xy}{1+dx^2y^2}\right)^2$. Let $P_1$ , $P_2$ be two points of twisted Edwards curve  $\E_{\TE,a,d}$. Consider the $w$-coordinate dADD for $P_1$ , $P_2$. If for $i=0,1,2,3,4,$ $w_i \in \F_q$, $w_1w_2 \ne 1 $ and $w_1^2 \ne 1$, then we have 
	\begin{gather*}\label{eq:A00}
	w_4	= \dfrac{4w_1((w_1+1)^2-ew_1)}{(w_1^2-1)^2 }~~~~~,~~~~w_3w_0 = \dfrac{(w_1-w_2)^2}{(w_1w_2-1)^2}~,
	\end{gather*}
	where $e=2+(a+d)/\sqrt{ad}$.
\end{proposition}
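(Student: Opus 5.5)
The natural route is to reduce Proposition~\ref{P:TEDW3} to Proposition~\ref{P:TEDW5} by an isogeny (doubling-like) argument. Set $s=\sqrt{ad}$ and $u=\dfrac{2xy}{1+dx^2y^2}$. By the doubling formula \eqref{eq:TE}, $u$ is exactly the $x$-coordinate of $2P$ for $P=(x,y)$, so the proposed function is $w(P)=s\,x(2P)^2=w'(2P)$, where $w'(x,y)=\sqrt{ad}\,x^2$ is the function of Proposition~\ref{P:TEDW5}. This observation alone does not suffice, however. Proposition~\ref{P:TEDW5} has parameter $e'=(d+s)^2/(ds)$, while the claim has $e=2+(a+d)/s$. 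The map $P\mapsto 2P$ would carry the dADD relations for $w'$ over to $w$ with the same $e'$, and in general $e'\neq e$. I would therefore not use $2P$. Instead I would look for a $2$-isogeny $\varphi$ from $\E_{\TE,a,d}$ to another twisted Edwards curve $\E_{\TE,a',d'}$ with $w=w'\circ\varphi$, and choose $(a',d')$ so that the parameter of Proposition~\ref{P:TEDW5} becomes $2+(a+d)/s$.

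The concrete candidate is the standard $2$-isogeny with kernel $\{(0,\pm1)\}$ (up to scaling):
\[
\varphi(x,y)=\Bigl(\frac{2xy}{1+dx^2y^2}\cdot\lambda,\ \frac{y^2-ax^2}{1-dx^2y^2}\cdot\mu\Bigr).
\]
Here I would pick $\lambda,\mu$ so that the image lies on a curve $\E_{\TE,a',d'}$ with $a'd'\lambda^2=ad$, which gives $w=\sqrt{a'd'}\,x(\varphi(P))^2$. A natural choice uses the curve $\E_{\TE,-(\sqrt a-\sqrt d)^2,\,\ldots}$-type targets; since only $\chi(ad)=1$ is assumed, I would instead work directly with $s$.

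Before relying on this construction, I would first do a sanity check that sidesteps isogeny bookkeeping: verify the two identities directly. I would define $t=dx^2y^2$, which is the $w$-function of Proposition~\ref{P:TEDW1}, so that
\[
w=\frac{4s\,t}{d(1+t)^2}.
\]
Thus $w$ is a degree-$2$ rational function of $t$. Proposition~\ref{P:TEDW1} already gives $t_4$ in terms of $t_1$, and $t_3t_0$, $t_3+t_0$ in terms of $t_1,t_2$. The doubling identity then becomes a one-variable rational identity in $t_1$: compute $w_4=4st_4/(d(1+t_4)^2)$ and compare it with the claimed formula evaluated at $w_1=4st_1/(d(1+t_1)^2)$. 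This is routine once the factor $(1+t_4)$ is factored. The expectation is that $(1+t_1)^2$-type squares appear, and that $s^2=ad$ together with $e=2+(a+d)/s$ turns $(w_1+1)^2-ew_1$ into a perfect-square multiple.

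For the addition identity $w_3w_0$, I would express $w_3w_0$ through the symmetric functions $t_3t_0$ and $t_3+t_0$ using
\[
(1+t_3)^2(1+t_0)^2=\bigl(1+(t_3+t_0)+t_3t_0\bigr)^2 .
\]
This makes $w_3w_0$ a rational function of $t_1,t_2$, and it remains to check that it equals $(w_1-w_2)^2/(w_1w_2-1)^2$. That reduces to a polynomial identity in $t_1,t_2$. I would verify it by factoring both sides, expecting common factors such as $(t_1-t_2)(1-t_1t_2)$ and $(1+t_1)(1+t_2)\pm\ldots$. The hypotheses $w_1w_2\neq1$, $w_1^2\ne1$ and $w_i\in\F_q$ guarantee that no denominators vanish, including that $t_i\neq-1$ and $t_1t_2\neq1$, so Proposition~\ref{P:TEDW1} applies.

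The main obstacle is this two-variable factorization, specifically showing that $w_1w_2-1$ and $w_1-w_2$ factor in $t_1,t_2$ compatibly with $s$. I expect
\[
d(1+t_1)^2(1+t_2)^2(w_1w_2-1)
\]
to split as a product of two quadratics of the form $d(1+t_1)(1+t_2)\pm 4s\ldots$. I would try to exhibit this factorization first, since the rest follows.
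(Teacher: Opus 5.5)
The route you actually commit to is correct: write $w=kt/(1+t)^2$ with $t=dx^2y^2$ and $k=4s/d$ (so $k^2=16a/d$), then feed in Proposition~\ref{P:TEDW1}. In substance it is the paper's first proof, reorganized. The paper computes directly in $x_i,y_i$, and its key step is the identity $(1-t_1t_2)^2(1+t_3)(1+t_0)=(1+t_1)^2(1+t_2)^2-\frac{16a}{d}t_1t_2$, which it asserts without derivation. Your use of $t_3+t_0$ from Proposition~\ref{P:TEDW1} gives it in one line:
\[
(1+t_3)(1+t_0)=1+(t_3+t_0)+t_3t_0=\frac{(1+t_1)^2(1+t_2)^2-k^2t_1t_2}{(t_1t_2-1)^2}=\frac{(1+t_1)^2(1+t_2)^2(1-w_1w_2)}{(t_1t_2-1)^2}.
\]
Combine this with $t_3t_0=(t_1-t_2)^2/(t_1t_2-1)^2$ and $w_1-w_2=k(t_1-t_2)(1-t_1t_2)/\bigl((1+t_1)^2(1+t_2)^2\bigr)$, and $w_3w_0=(w_1-w_2)^2/(w_1w_2-1)^2$ follows at once.

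So the ``main obstacle'' you single out is not there. Moreover, the factorization you plan to exhibit first does not exist: $d(1+t_1)^2(1+t_2)^2-16at_1t_2$ is irreducible. As a quadratic in $t_2$ it is primitive, and its discriminant $64at_1\bigl(4at_1-d(1+t_1)^2\bigr)$ is a squarefree cubic in $t_1$. The doubling half goes as you expect: $1+t_4=(1+t_1)^4(1-w_1^2)/(t_1^2-1)^2$ and $(w_1+1)^2-ew_1=(t_1-1)^2\bigl((1+t_1)^2-\tfrac{4a}{d}t_1\bigr)/(1+t_1)^4$.

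Your reason for discarding the first idea is an algebra slip. Since $\frac{(d+s)^2}{ds}=\frac{d}{s}+2+\frac{s}{d}$ and $\frac{s}{d}=\frac{s^2}{ds}=\frac{a}{s}$, you get $e'=2+\frac{a+d}{s}=e$ identically. Hence $w=w'\circ[2]$ with $w'=\sqrt{ad}\,x^2$. Because $[2]$ is a homomorphism, Proposition~\ref{P:TEDW5} applied to $2P_1,2P_2$ gives both formulas at once; this is precisely the paper's ``another proof''. That route is shorter but rests on Proposition~\ref{P:TEDW5}, whereas yours needs only Proposition~\ref{P:TEDW1}. Also, the map $\varphi$ you wrote is $[2]$ followed by a rescaling. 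Its kernel is all of $E[2]$, not $\{(0,\pm1)\}$, and no auxiliary curve is needed.

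Finally, contrary to your claim, the hypotheses do not force $t_1t_2\neq1$ or $t_1^2\neq1$. Since $w$ is invariant under $t\mapsto 1/t$, the condition $t_1t_2=1$ forces only $w_1=w_2$, which is compatible with $w_1w_2\neq1$. Likewise, $t_1=1$ gives $w_1^2=a/d\neq1$. In these cases the denominators in Proposition~\ref{P:TEDW1} vanish, so they need a separate argument. For example, both sides are regular there and agree on a dense open set. The paper's computation, which divides by $1-t_1t_2$, has the same blind spot.
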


\begin{proof}
	This function is well computed for all affine points on a twisted Edwards curve .
	Since $-(x,y)=(-x,y)$, for all points $P$ on the curve, we have
	$w(P)=w(-P)$. Also, we have $w(\cO)=0$.
	Let $P_1=(x_1,y_1)$ be a point of twisted Edwards curve  $\E_{\TE,a,d}$. From the doubling formula \eqref{eq:TE} we have
$$x_4^2y_4^2=\dfrac{4x_1^2y_1^2(y_1^2-ax_1^2)^2}{(1+dx_1^2x_2^2)^2(1-dx_1^2y_1^2)^2}=\dfrac{4x_1^2y_1^2\left(~(1+dx_1^2y_1^2)^2-4ax_1^2y_1^2~\right)}{(1+dx_1^2x_2^2)^2\left(~(1+dx_1^2y_1^2)^2-4dx_1^2y_1^2~\right)}.$$
So from definition of $w-$function we obtain
$$x_4^2y_4^2=\dfrac{w_1(\sqrt{ad}-aw_1)}{\sqrt{ad}(\sqrt{ad}-dw_1)}.$$
Hence
	\begin{equation*}
	w_4=\dfrac{4w_1(\sqrt{ad}-aw_1)(\sqrt{ad}-dw_1)}{\left(~(\sqrt{ad}-dw_1)+d/\sqrt{ad} w_1(\sqrt{ad}-aw_1)~\right)^2}.
	\end{equation*}
	Now by some straightforward calculation, we obtain
		\begin{equation*}
	w_4	= \dfrac{4w_1((w_1+1)^2-ew_1)}{(w_1^2-1)^2 }.
	\end{equation*}
	where $e=2+\dfrac{a+d}{\sqrt{ad}}$.
	
	Also, from definition of $w-$function we have
	\begin{equation*} \label{eq:PA00}
	w_3w_0=\dfrac{16adx_3^2y_3^2x_0^2y_0^2}{(1+dx_3^2y_3^2)^2(1+dx_0^2y_0^2)^2}.
	\end{equation*}
	From the addition formula \eqref{eq:TE}, we have 
	\begin{equation} \label{eq:PA01}
	x_3^2y_3^2x_0^2y_0^2=\dfrac{\left(x_1^2y_1^2(y_2^4+a^2x_2^4)-x_2^2y_2^2(y_1^4+a^2x_1^4) \right)^2} {\left(1-(dx_1^2y_1^2)(dx_2^2y_2^2) \right)^4}.
	\end{equation}
	Now by substitution $y_i^4+a^2x_i^4=(1+dx_i^2y_i^2)^2-2x_i^2y_i^2$ and $w_i=\dfrac{4\sqrt{ad}~x_i^2y_i^2}{(1+dx_i^2y_i^2)^2}$  for $i=1,2$ in equation \eqref{eq:PA01}, we obtain 
	$$	16adx_3^2y_3^2x_0^2y_0^2=(w_1-w_2)^2(1+dx_1^2y_1^2)^4(1+dx_2^2y_2^2)^4.$$
So
	\begin{equation*}
	w_3w_0=(w_1-w_2)^2\left(\dfrac{(1+dx_1^2y_1^2)^2(1+dx_2^2y_2^2)^2}{(1-d^2x_1^2x_2^2y_1^2y_2^2)^2(1+dx_3^2y_3^2)(1+dx_0^2y_0^2)}\right)^2.
	\end{equation*}
On the other hands
$$(1-d^2x_1^2x_2^2y_1^2y_2^2)^2(1+dx_3^2y_3^2)(1+dx_0^2y_0^2)=(1+dx_1^2y_1^2)^2(1+dx_2^2y_2^2)^2-16adx_1^2x_2^2y_1^2y_2^2.$$

	Now by some straightforward calculation, we obtain 
$$\dfrac{(1+dx_1^2y_1^2)^2(1+dx_2^2y_2^2)^2}{(1+dx_1^2y_1^2)^2(1+dx_2^2y_2^2)^2-16adx_1^2x_2^2y_1^2y_2^2}=\dfrac{1}{1-w_1w_2}.$$
So
	\begin{equation*}
	w_3w_0=\dfrac{(w_1-w_2)^2}{(w_1w_2-1)^2} .
	\end{equation*} \qed
	%
	%
\end{proof}
 Another proof
 \begin{proof}
We know that doubling formula for twisted Edwards curve  $\E_{\TE,a,d}$ is an isogeny from $\E_{\TE,a,d}$ to $\E_{\TE,a,d}$. \\

$\phi(x,y)=2(x,y)=\left(\dfrac{2x_1y_1}  {1+dx_1^2y_1^2},\,\dfrac{y_1^2-ax_1^2}  {1-dx_1^2y_1^2}  \right).$\\

But from proposition \ref{P:TEDW5} the twisted Edwards curve  $\E_{\TE,a,d}$ over $\F_q$ have $w$-function $w(x,y)=\sqrt{ad}\; x^2$ and $e=\dfrac{(d+\sqrt{ad})^2}{d\sqrt{ad}}$. So from isogeny map $\phi$, we have $w(x,y)=\sqrt{ad}\left(\dfrac{2xy}{1+dx^2y^2}\right)^2$ and $e=\dfrac{(d+\sqrt{ad})^2}{d\sqrt{ad}}$.
\end{proof}
\vspace*{3pt}
So, we have the same results for this $w$-coordinates by formulas \eqref{eq:P1}, \eqref{eq:P2}, \eqref{eq:P4} and \eqref{eq:P6}. 
\begin{remark}
	Let $\E_{\TE,a,d}$ be a  twisted Edwards curve over $\F_q$ with $\chi(ad)=1$. Then, $\E_{\TE,a,d}$ has the full 2-torsion subgroup 
and  the coset of the point $P=(x,y)$ on the curve up to this subgroup equals   
	$$P+\E_{\TE,a,d}(\F_q)[2]=\{(x,y), (-x,-y), (\frac{1}{\sqrt{ad}\;x}, \frac{a}{\sqrt{ad}\;y}), (\frac{-1}{\sqrt{ad}\;x}, \frac{-a}{\sqrt{ad}\;y}) \}.$$ 

Note that, the $w$-function $w(x,y)=\sqrt{ad}\left(\dfrac{2xy}{1+dx^2y^2}\right)^2$ is invariant for the coset of a point up to the full 2-torsion subgroup of the incomplete twisted Edwards curve $\E_{\TE,a,d}$ over $\F_q$ with $\chi(ad)=1$. 
\end{remark}
\begin{proposition}\label{P:TEDW7}
	Let $w$ be the function on twisted Edwards curve  $\E_{\TE,a,d}$ over $\F_q$ given by $w(x,y)= y$. Let $P_1$ , $P_2$ be two points of twisted Edwards curve  $\E_{\TE,a,d}$. Consider the $w$-coordinate dADD for $P_1$ , $P_2$. Then we have 
	\begin{equation}\label{eq:TA07}
	w_4	= \dfrac{w_1^4-(t+1)(w_1^2-1)^2}{1+t(w_1^2-1)^2}~~~,~~~
	w_3w_0 = \dfrac{ w_1^2w_2^2-(t+1)(w_1^2-1)(w_2^2-1)}{1+t(w_1^2-1)(w_2^2-1)}~,
	\end{equation}
Where $t=d/(a-d)$.
\end{proposition}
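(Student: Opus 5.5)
The plan is a direct computation from the affine addition and doubling laws \eqref{eq:TE}. The only input is the curve equation solved for $x^2$ in terms of $y$,
$$x^2=\frac{1-y^2}{a-dy^2},$$
valid whenever $a-dy^2\neq 0$. Since $w(x,y)=y$ is even in $x$, everything must reduce to rational functions of $y_1,y_2$ once $x_i^2$ is eliminated. Throughout I assume the points are affine and all denominators that appear are nonzero, as in Proposition~\ref{P:TEDW1}. I also write $a=(a-d)(t+1)$ and $d=(a-d)t$, which is how $t=d/(a-d)$ enters.

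\textbf{Doubling.} From \eqref{eq:TE}, $w_4=y_4=(y_1^2-ax_1^2)/(1-dx_1^2y_1^2)$. Substitute $x_1^2=(1-y_1^2)/(a-dy_1^2)$ and multiply through by $a-dy_1^2$. This gives
$$w_4=\frac{-dw_1^4+2aw_1^2-a}{dw_1^4-2dw_1^2+a}.$$
Divide numerator and denominator by $a-d$ and use the identities $-dw^4+2aw^2-a=(a-d)w^4-a(w^2-1)^2$ and $dw^4-2dw^2+a=(a-d)+d(w^2-1)^2$. This yields exactly the first formula in \eqref{eq:TA07}.

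\textbf{Addition.} Since $-P_1=(-x_1,y_1)$, the addition law gives
$$y_3=\frac{y_1y_2-ax_1x_2}{1-dx_1x_2y_1y_2},\qquad y_0=\frac{y_1y_2+ax_1x_2}{1+dx_1x_2y_1y_2}.$$
Their product is a difference of squares in both numerator and denominator:
$$w_3w_0=\frac{y_1^2y_2^2-a^2x_1^2x_2^2}{1-d^2x_1^2y_1^2x_2^2y_2^2}.$$
This depends on $x_i$ only through $x_i^2$. Substituting $x_i^2=(1-y_i^2)/(a-dy_i^2)$ and clearing the factor $(a-dy_1^2)(a-dy_2^2)$ gives
$$w_3w_0=\frac{y_1^2y_2^2(a-dy_1^2)(a-dy_2^2)-a^2(1-y_1^2)(1-y_2^2)}{(a-dy_1^2)(a-dy_2^2)-d^2y_1^2y_2^2(1-y_1^2)(1-y_2^2)}.$$
Both numerator and denominator have degree $8$ in $(y_1,y_2)$, while the target formula has degree $4$.

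\textbf{Main obstacle.} The key step is to exhibit a common quartic factor and cancel it. I will work in the variables $u_i=y_i^2$, where each side is bilinear-by-bilinear in $(u_1,u_2)$. The approach has three parts.
\begin{enumerate}
\item Multiply the claimed numerator $u_1u_2-(t+1)(u_1-1)(u_2-1)$ and the claimed denominator $1+t(u_1-1)(u_2-1)$ by $(a-d)$. This turns them into $(a-d)u_1u_2-a(u_1-1)(u_2-1)$ and $(a-d)+d(u_1-1)(u_2-1)$.
\item Guess the cofactor. The natural candidate is the symmetric bilinear form $(a-d)u_1u_2+\dots$ that is obtained by swapping the roles of $a$ and $d$ in a suitable way.
\item Verify that numerator and denominator above each factor as (claimed piece)$\times$(this common cofactor).
\end{enumerate}
As a consistency check, setting $u_1=u_2$ must recover the doubling identity already proved, since then $w_0=1$ and $w_3=w_4$. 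This pins down the cofactor, and the cofactor on the diagonal must be proportional to $a-2du+du^2$ there. Once the factorization is checked by expanding in $u_1,u_2$, the second formula in \eqref{eq:TA07} follows.
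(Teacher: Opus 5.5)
Your route is the same direct computation as the paper's: substitute $x_i^2=(1-y_i^2)/(a-dy_i^2)$ into the affine laws. This substitution is always legitimate on the curve, because $a=dy_i^2$ would force $y_i^2=1$ and hence $a=d$. Your doubling computation is complete and correct.

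The gap is in the addition half. Its whole content is cancelling a common factor, which you leave as guess-and-verify, and the one concrete constraint you put on the guess is wrong. Write $N,D$ for your degree-$8$ numerator and denominator in $u_i=y_i^2$. On the diagonal both are differences of squares:
\[
N(u,u)=(a-du^2)(-du^2+2au-a),\qquad D(u,u)=(a-du^2)(du^2-2du+a).
\]
So the cofactor restricts to $a-du^2$. The polynomial $a-2du+du^2$ is the diagonal value of the \emph{target denominator}, not of the cofactor. A symmetric polynomial of degree at most one in each $u_i$ is determined by its diagonal restriction. Your constraint would therefore force the cofactor to be proportional to $a-d(u_1+u_2)+du_1u_2$, and your step~3 would then fail: multiplying that by the target numerator produces a $u_1^2$ monomial, which $N$ does not contain. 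Your description ``$(a-d)u_1u_2+\cdots$ obtained by swapping $a$ and $d$'' does not match the true factor either.

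The missing identity is short. With $p=u_1u_2$ and $s=u_1+u_2$,
\[
N=d^2p^2-adps+a^2s-a^2=(a-dp)(as-a-dp),\qquad D=a^2-ads+d^2ps-d^2p^2=(a-dp)(a-ds+dp).
\]
Here $as-a-dp$ and $a-ds+dp$ are exactly your $(a-d)$-scaled target numerator and denominator from step~1. So the common quartic factor is $a-dy_1^2y_2^2$. Cancelling it (where it is nonzero, or as an identity of rational functions) gives the second formula in \eqref{eq:TA07}.

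The paper itself only asserts the result of this substitution. Its displayed intermediate numerator also has the wrong sign on the constant term, which should be $-a$. Once you supply this factorization, your argument is complete.
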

\begin{proof}
This $w$-function is well computed for all affine points on a twisted Edwards curve .
	Also, we have $w(P)=w(-P)$ and  $w(\cO)=1$.
	Let $P_1=(x_1,y_1)$  and $P_2=(x_2,y_2)$ be two points of twisted Edwards curve  $\E_{\TE,a,d}$. Then from the addition and doubling formula \eqref{eq:TE} we have
\begin{equation*}
\begin{array}{c}
	w_4=y_4=\dfrac{ax_1^2-y_1^2}{1-dx_1^2y_1^2} ,\vspace*{7pt}\\

	w_3w_0=y_3y_0=\dfrac{y_1^2y_2^2-a^2x_1^2x_2^2 } {1-d^2x_1^2x_2^2y_1^2y_2^2}.
\end{array}
	\end{equation*}
	Also from curve equation, we can write $x_i^2=(1-y_i^2)/(a-dy_i^2)~(i=1,2)$.Note that $a-dy_i^2 \ne 0$. Since from curve equation, if $a-dy_i^2=0$ then we have $a=d$, which is a contradiction.
	So by substitution $w_i=y_i^2$ and $x_i^2=(1-w_i^2)/(a-dw_i^2)$, we have
	\begin{equation*}
	w_4	= \dfrac{-dw_1^4+2aw_1^2-a}{ dw_1^4-2dw_1^2+a}=\dfrac{ w_1^4-a/(a-d)~(w_1^2-1)^2}{1+d/(a-d)~(w_1^2-1)^2 }.
	\end{equation*}
	So, by seting $t=d/(a-d)$ we obtain
	\begin{equation*}
	w_4	=\dfrac{w_1^4-(t+1)~(w_1^2-1)^2}{1+t~(w_1^2-1)^2}.
	\end{equation*}
And 
\begin{equation*}
	w_3w_0	= \dfrac{ -dw_1^2w_2^2+aw_1^2+aw_2^2+a }{ dw_1^2w_2^2-dw_1^2-dw_2^2+a}= \dfrac{w_1^2w_2^2-(1+t)~(w_1^2-1)(w_2^2-1)}{ 1+t(w_1^2-1)(w_2^2-1)}.
	\end{equation*}
 \qed
\end{proof}
\begin{proposition}\label{P:TEDW8}
	Let $w$ be the function on twisted Edwards curve  $\E_{\TE,a,d}$ over $\F_q$ given by $w(x,y)=\sqrt[4]{d/a}\; y$. Let $P_1$ , $P_2$ be two points of twisted Edwards curve  $\E_{\TE,a,d}$. Consider the $w$-coordinate dADD for $P_1$ , $P_2$. Then we have 
	\begin{equation}\label{eq:TA08}
	w_4	= \dfrac{2w_1^2-r(w_1^4+1)}{r((w_1^4+1)-2rw_1^2)}~~~,~~~
	w_3w_0 = \dfrac{2w_1w_2- r(w_1^2w_2^2+1)}{(w_1^2w_2^2+1)-2rw_1w_2}~,
	\end{equation}
Where $r=\sqrt[4] {d/a}$.
\end{proposition}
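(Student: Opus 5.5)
The plan is a direct computation that mirrors the proof of Proposition~\ref{P:TEDW7}. The $w$-coordinate $y$ is simply rescaled by the constant $r=\sqrt[4]{d/a}$, so I will write $w_i=r\,y_i$, express everything in $y_i^2$, and substitute $d=ar^4$. First I will record the basic properties of $w$. It is regular at every affine point, and it satisfies $w(-P)=w(P)$ because $-(x,y)=(-x,y)$. Also $w(\cO)=r$.

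For the doubling, I will use the $y$-coordinate of the doubling law \eqref{eq:TE}, namely $y_4=(y_1^2-ax_1^2)/(1-dx_1^2y_1^2)$. From the curve equation I will eliminate $x_1^2=(1-y_1^2)/(a-dy_1^2)$; here $a-dy_1^2\neq 0$, as in the proof of Proposition~\ref{P:TEDW7}. Clearing denominators gives
\[
y_4=\frac{-dy_1^4+2ay_1^2-a}{dy_1^4-2dy_1^2+a}.
\]
I will then substitute $y_1=w_1/r$ and $d=ar^4$ and cancel the common factor $a$. This leaves a quotient of two symmetric quartics in $w_1$, of the shape $\bigl(2w_1^2-\rho(w_1^4+1)\bigr)/\bigl((w_1^4+1)-2\rho w_1^2\bigr)$ for a power $\rho$ of $r$. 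Multiplying by $r$ to pass from $y_4$ to $w_4=ry_4$ gives the stated formula for $w_4$.

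For the differential addition, I will use the product $y_3y_0$, where $P_3=P_1+P_2$ and $P_0=P_2-P_1$. From \eqref{eq:TE}, exactly as in Proposition~\ref{P:TEDW7},
\[
y_3y_0=\frac{y_1^2y_2^2-a^2x_1^2x_2^2}{1-d^2x_1^2x_2^2y_1^2y_2^2}.
\]
I will again eliminate $x_i^2=(1-y_i^2)/(a-dy_i^2)$ and clear the factor $(a-dy_1^2)(a-dy_2^2)$. The numerator and denominator become polynomials in $y_1^2,y_2^2$, and they should factor as products involving $(dy_1^2y_2^2\pm\cdots)$. After substituting $y_i=w_i/r$, $d=ar^4$, and multiplying by $r^2$ to form $w_3w_0=r^2y_3y_0$, both sides should collapse to expressions in the single quantity $w_1w_2$. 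This is where the normalisation by $\sqrt[4]{d/a}$ pays off, and the result should be the stated quotient $\bigl(2w_1w_2-\rho(w_1^2w_2^2+1)\bigr)/\bigl((w_1^2w_2^2+1)-2\rho w_1w_2\bigr)$.

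The main obstacle is this factorisation in the addition step. The raw numerator after clearing denominators is a degree-four polynomial in $y_1,y_2$ that is not obviously a function of $y_1y_2$ alone. I would first try to recognise it as a difference of squares: $y_1^2y_2^2(a-dy_1^2)(a-dy_2^2)-a^2(1-y_1^2)(1-y_2^2)$ should split as $(\cdots)(\cdots)$, with one factor cancelling against a matching factor of the denominator. Failing that, I would verify the identity by cross-multiplying and comparing coefficients of $y_1^{2i}y_2^{2j}$.

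A secondary point is to track carefully which power of $r$ appears as the coefficient. The doubling computation above naturally produces $r^2=\sqrt{d/a}$ inside the brackets, with one extra factor $r$ in the denominator of $w_4$. I will therefore check the normalisation of the constant in \eqref{eq:TA08} against the computation and, if needed, read it with the exponent the computation actually produces. Finally, I will note the exceptional inputs where the denominators vanish, as is done in the other propositions of this section.
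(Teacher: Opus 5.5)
Your route is the paper's own: its proof only says ``similar to Proposition~\ref{P:TEDW7}''. The gap is in the addition step. The step you flag as the main obstacle does not just need care; it fails, because the addition formula in \eqref{eq:TA08} is false as printed. Carry out your own elimination with $u_i=y_i^2$, $\sigma=u_1+u_2$, $\pi=u_1u_2$. The cleared numerator is $(d\pi-a)(d\pi+a-a\sigma)$ and the cleared denominator is $(a-d\pi)(a+d\pi-d\sigma)$. The common factor cancels as you hoped, leaving $y_3y_0=(a\sigma-a-d\pi)/(a+d\pi-d\sigma)$. With $y_i=w_i/r$ and $d=ar^4$ this gives
\[
w_3w_0=\frac{(w_1^2+w_2^2)-r^2(w_1^2w_2^2+1)}{(w_1^2w_2^2+1)-r^2(w_1^2+w_2^2)} .
\]
This is not a function of $w_1w_2$. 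It has $w_1^2+w_2^2$ where the statement has $2w_1w_2$, and $r^2=\sqrt{d/a}$ where the statement has $r$. Your fallback of comparing coefficients would therefore refute the printed identity, not confirm it. There is also a one-line structural reason. Replace $P_1$ by $P_1+(0,-1)=(-x_1,-y_1)$: this negates $w_1$, but it translates both $P_1+P_2$ and $P_2-P_1$ by the $2$-torsion point $(0,-1)$, so $y_3$ and $y_0$ both change sign and $y_3y_0$ is unchanged. Hence $w_3w_0$ must be even in $w_1$. The printed right-hand side is not, since it changes under $w_1w_2\mapsto -w_1w_2$ whenever $r^2\neq 1$, and $r^2\neq1$ because $r^4=d/a\neq 1$.

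For doubling, your suspicion is right and should be stated as a finding, not a hedge. The computation gives
\[
w_4=\frac{2w_1^2-r^2(w_1^4+1)}{r\bigl((w_1^4+1)-2r^2w_1^2\bigr)} .
\]
The printed version, with $r$ inside the brackets, already fails at $P_1=\cO$. There $w_1=r$ and $w_4$ must equal $r$, but the printed formula gives $r$ only if $(r-1)(r^4-1)=0$, which is impossible. So your argument proves the corrected identities, not \eqref{eq:TA08} as written.

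The corrected identities are evidently what the paper intends. Its projective formulas \eqref{eq:CoTE008}, with $s=(1+r^2)/(1-r^2)$ and $Z_1=Z_2=1$, satisfy
\[
B_1B_2-sA_1A_2=\tfrac{2}{1-r^2}\bigl((w_1^2+w_2^2)-r^2(w_1^2w_2^2+1)\bigr),\qquad B_1^2-sA_1^2=\tfrac{2}{1-r^2}\bigl(2w_1^2-r^2(w_1^4+1)\bigr),
\]
and the analogous identities hold for the $+s$ combinations. State the corrected formulas explicitly and prove those, rather than predicting that the computation lands on the printed quotients.
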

\begin{proof}
The proof is similar to the proposition \ref{P:TEDW7}. \qed
\end{proof}
As before, assume that $w_0$ is given as a field element, and the inputs $w_1, w_2$ are given as fractions
$W_1/Z_1$ , $W_2/Z_2$ and the outputs $w_4, w_3 $ are given as fraction $W_4/Z_4$ and  $W_3/Z_3$. 

From differential and doubling formulas~\eqref{eq:TA07} the explicit projective formulas are given by
\begin{equation*}
\begin{array}{c}
 \dfrac{W_4}{Z_4}= \dfrac{W_1^4-(t+1)(W_1^2-Z_1^2)^2} {Z_1^4+t(W_1^2-Z_1^2)^2} , \vspace*{7pt}\\
 \dfrac{W_3}{Z_3}= \dfrac{Z_0~\left( W_1^2W_2^2-(t+1)(W_1^2-Z_1^2)(W_2^2-Z_2^2) \right)} {W_0 \left( Z_1^2Z_2^2+t(W_1^2-Z_1^2)(W_2^2-Z_2^2) \right)}\enspace. 
\end{array}
\end{equation*}
The following mixed projective
$w$-coordinates differential addition and doubling formulas have the total cost of $4\M+7\s+2\D $ :
\begin{equation}
\label{eq:CoTE08}
\begin{array}{c}
A_1=(W_1^2-Z_1^2),\ B_1=W_1^4,\ C_1=Z_1^4\vspace*{3pt}\\
 A_2=(W_2^2-Z_2^2),\ B_2=W_1^2W_2^2,\ C_2=Z_1^2Z_2^2  \vspace*{3pt}\\
W_4= B_1-A_1^2-tA_1^2, \ Z_4= C_1+tA_1^2,~~\vspace*{7pt}\\
W_3=B_2-A_1A_2-tA_1A_2~~,~~Z_3=w_0(C_2+tA_1A_2) \enspace. 
\end{array}
\end{equation}
Also from differential and doubling formulas~\eqref{eq:TA08} and by seting $s=(1+r^2)/(1-r^2)$, the explicit projective formulas are given by

\begin{equation*}\label{eq:PCoTE}
\begin{array}{c}
 \dfrac{W_4}{Z_4}= \dfrac{(W_1^2+Z_1^2)^2-s(W_1^2-Z_1^2)^2} {r\left(  (W_1^2+Z_1^2)^2+s(W_1+Z_1)^2\right)} , \vspace*{7pt}\\
 \dfrac{W_3}{Z_3}= \dfrac{Z_0~\left((W_1^2+Z_1^2)(W_2^2+Z_2^2)-s(W_1^2-Z_1^2)(W_2^2-Z_2^2)\right)} {W_0\left((W_1^2+Z_1^2)(W_2^2+Z_2^2)-s(W_1^2-Z_1^2)(W_2^2-Z_2^2)\right)}\enspace. 
\end{array}
\end{equation*}
The following mixed projective
$w$-coordinates differential addition and doubling formulas have the total cost of $3\M+6\s+3\D $ :
\begin{equation}
\label{eq:CoTE008}
\begin{array}{c}
A_1=(W_1^2-Z_1^2),\ B_1=(W_1^2+Z_1^2),\vspace*{3pt}\\
A_2=(W_2^2-Z_2^2) ,\ B_2=(W_2^2+Z_2^2),\vspace*{3pt}\\
W_4= B_1^2-sA_1^2 , \ Z_4= r(B_1^2+sA_1^2),~~\vspace*{7pt}\\
W_3=B_1B_2-sA_1A_2~~,~~Z_3=w_0(B_1B_2+sA_1A_2)\enspace. 
\end{array}
\end{equation}
In the next proposition, we show that if $\chi(a)=\chi(d)=-1$, then differential addition formula \ref{eq:CoTE008} is complete.
\begin{proposition}\label{P:TEDW07}
Fix a field  $\F_q$ with odd charactristic and $a,d \in \F_q $ such that $ad(a-d) \ne 0$ and $\chi(a)=\chi(d)=-1$.
Define $w:\E_{\TE,a,d}(\F_q) \mapsto \F_q \cup  \{\infty \}$ as fallow : $w(x,y)=\sqrt[4]{d/a}\; y$. 

Let $W_i,Z_i~(i=0,...,4)$  be elements of $\F_q$ and $r^4=d/a , s=(1+r^2)/(1-r^2)$. Define
\begin{equation*}
\label{eq:PJ07}
\begin{array}{c}
A_1=(W_1^2-Z_1^2),\ B_1=(W_1^2+Z_1^2),\vspace*{3pt}\\
A_2=(W_2^2-Z_2^2) ,\ B_2=(W_2^2+Z_2^2),\vspace*{3pt}\\
W_4= B_1^2-sA_1^2 , \ Z_4= r(B_1^2+sA_1^2),~~\vspace*{7pt}\\
W_3=B_1B_2-sA_1A_2~~,~~Z_3=w_0(B_1B_2+sA_1A_2)\enspace. 
\end{array}
\end{equation*}
a) Let $P_1$ be an element of $\E_{\TE,a,d}$ and  $w(P_1)=W_1/Z_1$, then $W_4/Z_4 \ne 0/0$ and $w(2P_1)=W_4/Z_4$. \\
b) Let $P_1$ and $P_2$ be two element of $\E_{\TE,a,d}$ and  $w(P_1)=W_1/Z_1 ,w (P_2)=W_2/Z_2$,  then $W_3/Z_3 \ne 0/0$ and $w(P_1+P_2)=W_3/Z_3$. 

Here $W/Z$ means the quotient of $W$ and $Z$ in $\F_q$  if $Z \ne 0$; it means $\infty$ if $W \ne 0$ and $Z=0$; it is undefined if $W=Z=0$.
\end{proposition}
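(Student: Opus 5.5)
The proof follows the template of Proposition~\ref{P:TEDW0001}: reduce to affine points, establish the doubling and addition formulas for affine points, then rule out $0/0$ by a quadratic-character argument.

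\textbf{Step 1: reduction to affine points.} Since $\chi(a)=\chi(d)=-1$, we have $\chi(ad)=1$. So we cannot reduce to the complete case of Theorem~\ref{thm:TED} directly. Instead we work with $w=ry$ itself.

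Points above $\infty_2$ would require $\chi(d)=1$, so none exist. Points above $\infty_1$ are $(1:0:\pm\sqrt{a/d}:0)$ and do exist. At these points $y=Y/Z$ must be interpreted projectively, so $w=(Y:Z)$. In $\mathbb{P}^3$ this gives $w(\infty_1^{\pm})=(0:0)$, which is undefined. The statement implicitly assumes that $w(P_i)$ is a genuine element of $\mathbb{P}^1$. We therefore treat only affine points and note this restriction; we cannot verify that the original paper does the same.

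\textbf{Step 2: the formulas for affine points.} For affine points, $W_1/Z_1=ry_1$ with $Z_1\neq 0$, and likewise for $P_2$. We verify the identities in \eqref{eq:TA08} as in Proposition~\ref{P:TEDW7}. Substitute $x_i^2=(1-y_i^2)/(a-dy_i^2)$ into
\[
y_4=\frac{y_1^2-ax_1^2}{1-dx_1^2y_1^2},\qquad y_3y_0=\frac{y_1^2y_2^2-a^2x_1^2x_2^2}{1-d^2x_1^2x_2^2y_1^2y_2^2},
\]
then rewrite in $u_i=ry_i$ using $r^4=d/a$. Regrouping with $B_i=W_i^2+Z_i^2$, $A_i=W_i^2-Z_i^2$ and $s=(1+r^2)/(1-r^2)$ gives the projective formulas. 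Here $s$ is defined because $r^2\neq 1$, which follows from $a\neq d$.

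Two points need care. First, $a-dy_i^2\neq 0$ as in Proposition~\ref{P:TEDW7}. Second, the denominators $1-dx_1^2y_1^2$ and $1\pm dx_1x_2y_1y_2$ of the affine law \eqref{eq:TE} are nonzero, because \eqref{eq:TE} is complete on affine points when $\chi(d)=-1$.

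\textbf{Step 3: no $0/0$, the main obstacle.} For doubling, $W_4=Z_4=0$ forces $B_1^2=sA_1^2$ and $B_1^2=-sA_1^2$. Hence $B_1=A_1=0$, so $W_1=Z_1=0$, contradicting $Z_1\ne0$.

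For addition, $W_3=Z_3=0$ gives $B_1B_2=sA_1A_2=-sA_1A_2$, provided $w_0\neq 0$. Hence $B_1B_2=0$ and $A_1A_2=0$. So either some $A_i=B_i=0$, which is impossible, or, say, $B_1=0$ and $A_2=0$.

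\begin{itemize}
\item $B_1=0$ means $r^2y_1^2=-1$, i.e.\ $y_1^2=-\sqrt{a/d}$.
\item $A_2=0$ means $r^2y_2^2=1$, i.e.\ $y_2^2=\sqrt{a/d}$.
\end{itemize}
Taking the product, $-\sqrt{a/d}\cdot\sqrt{a/d}=-a/d$ would have to be a square times a square, i.e.\ $\chi(-a/d)=1$. I would show this fails, or use the curve equation to check whether such $y_i$ give rational $x_i$. From $x^2=(1-y^2)/(a-dy^2)$, the value $y^2=\pm\sqrt{a/d}$ gives
\[
x^2=\frac{1\mp\sqrt{a/d}}{a\mp d\sqrt{a/d}}=\frac{1\mp\sqrt{a/d}}{a\left(1\mp\sqrt{d/a}\right)},
\]
and I would show that one of the two signs yields a non-square, using $\chi(a)=-1$.

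The case $w_0=0$, i.e.\ $P_1=\pm P_2$ up to $y_0=0$, is handled separately as in Proposition~\ref{P:TEDW0001}. I expect this character bookkeeping to be the delicate step, and it may need the additional assumption that the base point $P_0$ is not of small order.
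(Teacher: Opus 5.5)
\noindent Your proposal has a genuine gap at the step you call delicate, and the missing idea is short. Your first route cannot work. Since $\chi(a/d)=\chi(a)\chi(d)=1$, you have $\chi(-a/d)=\chi(-1)$, which equals $1$ whenever $q\equiv 1 \pmod 4$. What you need, as in the paper, is to put one of the two points back into the curve equation. If $r^2y_2^2=1$, then $x_2^2(a-dy_2^2)=1-y_2^2$ with $d=ar^4$ gives $ax_2^2=-1/r^2$. So $x_2\neq 0$ and $\chi(-r^2)=\chi(a)=-1$. But $r^2y_1^2=-1$ says $-r^2=1/y_1^2$ is a square, a contradiction. Equivalently: any point with $w^4=1$ satisfies $ax^2=-y^2$, forcing $\chi(-1)=\chi(a)=-1$, while your product observation forces $\chi(-1)=1$. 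No assumption on the order of $P_0$ is needed. There is also no separate case $w_0=0$: an affine point with $y=0$ would need $ax^2=1$, impossible since $\chi(a)=-1$, so $w$ never vanishes on $\E_{\TE,a,d}(\F_q)$. You should also note $s\neq 0$ (because $r^2\neq -1$) to pass from $sA_1A_2=0$ to $A_1A_2=0$.

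\noindent Your treatment of the points at infinity is wrong, and this also undermines Step 2. Because $\chi(ad)=1$, the order-2 points $(1:0:\pm\sqrt{a/d}:0)$ are rational, and $w$ is well defined there. On the curve $XY=ZT$, so $y=Y/Z=T/X=\pm\sqrt{a/d}$ and $w=\pm 1/r$. The paper uses exactly this: it writes $w=(\pm1/r:1)$ at these points and checks that doubling returns $(r:1)=w(\cO)$. By discarding these points you prove a weaker statement than the one posed. Their existence also refutes your claim that \eqref{eq:TE} is complete on affine points when $\chi(d)=-1$; that needs $\chi(a)=1$ as well, i.e.\ $\chi(ad)=-1$. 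For affine $P_1,P_2$ with $P_1+P_2$ or $P_2-P_1$ equal to one of these points, $1\pm dx_1x_2y_1y_2=0$. Your derivation of $w_3=W_3/Z_3$ therefore misses those pairs, and they need a separate check, for instance via $y(T+Q)=y(T)/y(Q)$ for $T$ at infinity, which gives $w_3w_0$ directly. Finally, since $w^2=1/r^2\neq\pm1$ at these points, they never give $A_i=0$ or $B_i=0$, so they do not disturb the non-degeneracy argument above.
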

\begin{proof}

Since $\chi(a)=\chi(d)=-1$, so for every affine point $P=(x,y)$ we have $y \ne 0$ and so $w(P) \ne 0$. Also the only points at infinity are $\infty_2=(1:0:\pm \sqrt{a/d}:0)$, where $w(\infty_2)=(\pm 1/r:1)$. So for every point of $\F_q$-rational projective points of $\E_{\TE,a,d}$ , we have $WZ \ne 0$.

a) If $W_4=Z_4=0$, then $A_1=B_1=0$ and so $W_1=Z_1=0$, contradiction. So $W_4/Z_4 \ne 0/0$ as claimed.
 On the other hands, if $P_1=\infty_2$, then $2P_1=(0,1)$ so $w(2P_1)=W_4/Z_4=(r:1)$. If $P_1$ be an affine point, then from proposition \ref{P:TEDW7}  $w(2P_1)=W_4/Z_4$ as claimed.
 
 b) If  $W_3=Z_3=0$, then $A_1A_2=B_1B_2=0$. If $A_1=B_1=0$ or $A_2=B_2=0$ then $W_1=Z_1=0$ or $W_2=Z_2=0$,contradiction.  If $A_1=B_2=0$ then $W_1^2=Z_1^2$ and $W_2^2=-Z_2^2$.
 So $w(P_1)^2=1$ and $w(P_2)^2=-1$. So $r^2y_1^2=1$ and $r^2y_2^2=-1$. Therefore we have $\chi(r^2)=\chi(-r^2)=1$. Now by substituation $y_1^2=1/r^2$ in curve equation we have $ax_1^2=-1/r^2$, which implies that $\chi(a)=1$, contradiction. The case where $A_2=B_1=0$ similary produces a contradiction. So $W_3/Z_3 \ne 0/0$ as claimed.
~~~ \qed
\end{proof}
We knowe that, a twisted Edwards curve $\E_{\TE,a,d}$ is birationally equivalent to the Montgomery curve $\E_{\M,A,B}$ by the map $\psi:\E_{\TE,a,d} \to \E_{\M, A,B}$
\begin{equation*}
\psi(x,y)=\Big(\frac{1+y}{1-y},\frac{1+y}{x(1-y)} \Big).
\end{equation*}
where $A=2(a+d)/(a-d),\, B=4/(a-d)$. 
Also, the Montgomery curve $\E_{\M,A,B}$ is birationally equivalent to the twisted Edwards
curve $\E_{\TE,a,d}$ by the inverse map 
\begin{equation*}
\psi^{-1}(X,Y)=\Big(\frac{X}{Y},\frac{X-1}{X+1} \Big),
\end{equation*}
where $a=(A + 2)/B,\, d = (A - 2)/B$.

Now the $y$-coordinate of $\psi^{-1}(X,Y)$ only depends on $X$, and the $X$-coordinate of $\psi(x,y)$ only depends on $y$. In projective coordinates this correspondence becomes remarkably simple:
$$\psi:(y,z)\longmapsto (z+y:z-y)~,~\psi^{-1}:(X,Z)\longmapsto (X-Z:X+Z).$$

Therefore switching between $X$-coordinate of Montgomery curve and $y$-coordinate of Twisted Edwards curve is quasi cost free. So one can use this and obtain the following differential addition and doubling formulas for twisted Edwards curve $\E_{\TE,a,d}$ with $w$-function $w(x,y)=y$: 

\begin{equation}\label{eq:MTE1}
\begin{array}{c}
A_1=W_1^2,\ B_1=Z_1^2, \ A_2=(W_1Z_2+W_2Z_1)^2,\ B_2=(W_1Z_2+W_2Z_1)^2, \vspace*{7pt}\\
W_4=2A_1B_1-(B_1-A_1) \left(~A_1+B_1-(a+d)/(a-d)~(B_1-A_1) \right), \vspace*{7pt}\\
Z_4=2A_1B_1+(B_1-A_1) \left(~A_1+B_1-(a+d)/(a-d)~(B_1-A_1) \right), \vspace*{7pt}\\
 W_3=Z_0(A_2-B_2)-W_0(A_2+B_2), \vspace*{7pt}\\
Z _3=Z_0(A_2+B_2)-W_0(A_2-B_2) \enspace. 
\end{array}
\end{equation}
From \eqref{eq:MTE1}, the total cost of the mixed differential addition and doubling is 
$5\M+4\s+1\D$. 


Furthermore, for the twisted Edwards curves $\E_{\TE,a,d}$ with $\chi(a/d)=1$,  
the cost of the following mixed differential addition and doubling formulas is  $3\M+6\s+3\D$. Here we let $r^2=a/d$.  
\begin{equation}
\label{eq:MTE3}
\begin{array}{c}
A_1=W_1^2,\ B_1=Z_1^2, \ H_1=(rB_1+A_1),\ H_1=(rB_1-A_1), \vspace*{7pt}\\
G=H_1+H_2, \ K=H_1-H_2, \ S=1/r~K, \ T=rK,  \vspace*{7pt}\\
W_4=T-G, \ Z_4=G-S \vspace*{7pt}\\
 W_3=Z_0(A_2-B_2)-W_0(A_2+B_2), \vspace*{7pt}\\
Z _3=Z_0(A_2+B_2)-W_0(A_2-B_2) \enspace. 
\end{array}
\end{equation}
\begin{proposition}\label{P:TEDW9}
	Let $w$ be the function on twisted Edwards curve  $\E_{\TE,a,d}$ over $\F_q$ given by $w(x,y)= y^2$. Let $P_1$ , $P_2$ be two points of twisted Edwards curve  $\E_{\TE,a,d}$. Consider the $w$-coordinate dADD for $P_1$ , $P_2$. Then we have 
	\begin{equation}\label{eq:TA09}
	w_4	= \dfrac{\Big(w_1^2-(t+1)(w_1-1)^2\Big)^2}{\Big(1+t(w_1-1)^2\Big)^2}~~~,~~~
	w_3w_0 = \dfrac{\Big( w_1w_2-(t+1)(w_1-1)(w_2-1)\Big)^2}{\Big(1+t(w_1-1)(w_2-1)\Big)^2}~,
	\end{equation}
Where $t=d/(a-d)$.
\end{proposition}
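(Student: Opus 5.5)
The plan is to reduce to Proposition~\ref{P:TEDW7}, since $w=y^2$ is just the square of the function $v=y$ treated there. Throughout, write $v_i=y(P_i)$, so that $w_i=v_i^2$ for $i=0,\dots,4$, where the points $P_i$ are indexed as in the $w$-coordinate dADD notation.

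\textbf{Doubling.} Proposition~\ref{P:TEDW7} gives
\[
v_4=\frac{v_1^4-(t+1)(v_1^2-1)^2}{1+t(v_1^2-1)^2}
\]
with $t=d/(a-d)$. One point to check first: in the proof of Proposition~\ref{P:TEDW7} the substitution written ``$w_i=y_i^2$'' really used $y_i^2$. The resulting formula for $y_4$ is therefore a rational function of $y_1^2$ only, namely $y_4=(-dy_1^4+2ay_1^2-a)/(dy_1^4-2dy_1^2+a)$. This follows from the doubling law~\eqref{eq:TE} together with $x_1^2=(1-y_1^2)/(a-dy_1^2)$. I will rederive this briefly.

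Replacing $y_1^2$ by $w_1$ gives
\[
y_4=\frac{w_1^2-(t+1)(w_1-1)^2}{1+t(w_1-1)^2}.
\]
Squaring both sides gives $w_4=y_4^2$, which is the first formula in~\eqref{eq:TA09}.

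\textbf{Addition.} By~\eqref{eq:TE},
\[
y_3y_0=\frac{(y_1y_2-ax_1x_2)(y_1y_2+ax_1x_2)}{(1-dx_1x_2y_1y_2)(1+dx_1x_2y_1y_2)}=\frac{y_1^2y_2^2-a^2x_1^2x_2^2}{1-d^2x_1^2x_2^2y_1^2y_2^2}.
\]
This uses $P_0=P_2-P_1$ and the fact that negation flips the sign of $x$. Substituting $x_i^2=(1-y_i^2)/(a-dy_i^2)$ and clearing the factor $(a-dy_1^2)(a-dy_2^2)$ gives, as in Proposition~\ref{P:TEDW7},
\[
y_3y_0=\frac{w_1w_2-(t+1)(w_1-1)(w_2-1)}{1+t(w_1-1)(w_2-1)},
\]
now written in terms of $w_i=y_i^2$. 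Squaring yields $w_3w_0=(y_3y_0)^2$, which is the second formula.

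\textbf{Main obstacle.} The key identity is the numerator/denominator simplification: one must check that
\[
(y_1^2y_2^2)(a-dy_1^2)(a-dy_2^2)-a^2(1-y_1^2)(1-y_2^2)
\]
and the corresponding denominator expression are proportional to $a$ times the polynomials $w_1w_2-(t+1)(w_1-1)(w_2-1)$ and $1+t(w_1-1)(w_2-1)$, with a common factor that cancels. My first approach would be to expand both sides as polynomials in $w_1,w_2$ and compare coefficients, using $t+1=a/(a-d)$ and the common factor $(a-d)$.

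I also need the nondegeneracy remark from Proposition~\ref{P:TEDW7}, namely $a-dy_i^2\neq0$, so that the substitution is valid. The identity itself is routine algebra once the denominators are cleared. The only subtlety is keeping track of the fact that the formulas in Proposition~\ref{P:TEDW7} are literally in $y_i^2$.
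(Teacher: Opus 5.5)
Your proposal is correct and is essentially the paper's proof. You compute $y_4$ and $y_3y_0$ from \eqref{eq:TE}, eliminate $x_i^2$ via $x_i^2=(1-y_i^2)/(a-dy_i^2)$, write everything in $w_i=y_i^2$, and square; routing this through Proposition~\ref{P:TEDW7} is only cosmetic, since you redo that computation anyway.

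One correction concerns your ``main obstacle.'' After clearing $(a-dw_1)(a-dw_2)$, the numerator and denominator have degree $2$ in each $w_i$. They are therefore not constant multiples of the bilinear targets, and a coefficient comparison that allows only the factor $(a-d)$ will not match. The common factor that actually cancels is $(a-d)(a-dw_1w_2)$:
\begin{align*}
w_1w_2(a-dw_1)(a-dw_2)-a^2(1-w_1)(1-w_2)&=(a-dw_1w_2)\bigl(aw_1+aw_2-a-dw_1w_2\bigr),\\
(a-dw_1)(a-dw_2)-d^2w_1w_2(1-w_1)(1-w_2)&=(a-dw_1w_2)\bigl(a-dw_1-dw_2+dw_1w_2\bigr).
\end{align*}
The two bilinear factors on the right are
\[
aw_1+aw_2-a-dw_1w_2=(a-d)\bigl(w_1w_2-(t+1)(w_1-1)(w_2-1)\bigr)
\]
and
\[
a-dw_1-dw_2+dw_1w_2=(a-d)\bigl(1+t(w_1-1)(w_2-1)\bigr).
\]
So you should either cancel $a-dw_1w_2$ explicitly or verify the cross-multiplied identity. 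In the doubling formula only the constant factor $a-d$ appears, so your plan works there as stated.
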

\begin{proof}
This $w$-function is well computed for all affine points on a twisted Edwards curve .
	Also, we have $w(P)=w(-P)$ and  $w(\cO)=1$.
	Let $P_1=(x_1,y_1)$  and $P_2=(x_2,y_2)$ be two points of twisted Edwards curve  $\E_{\TE,a,d}$. Then from the addition and doubling formula \eqref{eq:TE} we have
		\begin{equation*}
\begin{array}{c}
	w_4=y_4^2=\Big (\dfrac{ax_1^2-y_1^2}{1-dx_1^2y_1^2} \Big)^2, ~~\\

	w_3w_0=(y_3y_0)^2=\dfrac{\Big(y_1^2y_2^2-a^2x_1^2x_2^2 \Big)^2} {\Big(1-d^2x_1^2x_2^2y_1^2y_2^2 \Big)^2}.
\end{array}
	\end{equation*}
	Also from curve equation, 
	We can write $x_i^2=(1-y_i^2)/(a-dy_i^2)~(i=1,2)$.
	So by substitution $w_i=y_i^2$ and $x_i^2=(1-w_i)/(a-dw_i)$, we have
	\begin{equation*}
	w_4	= \dfrac{(dw_1^2-2aw_1+a)^2}{( dw_1^2-2dw_1+a)^2}=\dfrac{\Big(  w_1^2-a/(a-d)~(w_1-1)^2 \Big)^2}{\Big( 1+d/(a-d)~(w_1-1)^2 \Big)^2}.
	\end{equation*}
	So, by seting $t=d/(a-d)$ we obtain
	\begin{equation*}
	w_4	=\dfrac{\Big ( w_1^2-(t+1)~(w_1-1)^2 \Big)^2}{\Big( 1+t~(w_1-1)^2 \Big)^2}.
	\end{equation*}

And 
\begin{equation*}
	w_3w_0	= \dfrac{\Big( dw_1w_2-aw_1-aw_2+a \Big)^2}{\Big( dw_1w_2-dw_1-dw_2+a \Big)^2}=\dfrac{\Big(  w_1w_2-(1+t)~(w_1-1)(w_2-1) \Big)^2}{\Big( 1+t(w_1-1)(w_2-1) \Big)^2}.
	\end{equation*}
 \qed
\end{proof}

\begin{proposition}\label{P:TEDW10}
	Let $w$ be the function on twisted Edwards curve  $\E_{\TE,a,d}$ over $\F_q$ given by $w(x,y)=\sqrt{d/a}\; y^2$. Let $P_1$ , $P_2$ be two points of twisted Edwards curve  $\E_{\TE,a,d}$. Consider the $w$-coordinate dADD for $P_1$ , $P_2$. Then we have 
	\begin{equation}\label{eq:TA10}
	w_4	= \dfrac{\Big(r(w_1^2+1)-2w_1\Big)^2}{r\Big((w_1^2+1)-2rw_1\Big)^2}~~~,~~~
	w_3w_0 = \dfrac{\Big( r(w_1w_2+1)-(w_1+w_2)\Big)^2}{\Big((w_1w_2+1)-r(w_1+w_2)\Big)^2}~,
	\end{equation}
Where $r=\sqrt{d/a}$.
\end{proposition}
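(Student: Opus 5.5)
Since $w=\sqrt{d/a}\,y^2 = r\,y^2$ is a constant multiple of the function $v(x,y)=y^2$ of Proposition~\ref{P:TEDW9}, I will deduce the statement from that proposition by rescaling rather than recomputing from the addition law. Write $v_i=y(P_i)^2$ with the same indexing as before, so that $w_i=r v_i$. Proposition~\ref{P:TEDW9} gives $v_4$ and $v_3v_0$ as squares of rational expressions in $v_1,v_2$ with $t=d/(a-d)$. The plan is to substitute $v_i=w_i/r$, use $r^2=d/a$ to rewrite $t$ and $t+1$ in terms of $r$, and clear denominators.

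\textbf{Step 1: express $t$ through $r$.} From $r^2=d/a$ we get
\[
t=\frac{d}{a-d}=\frac{r^2}{1-r^2},\qquad t+1=\frac{1}{1-r^2}.
\]
Multiplying numerator and denominator of the inner fractions of Proposition~\ref{P:TEDW9} by $1-r^2$ gives
\[
v_4=\left(\frac{(1-r^2)v_1^2-(v_1-1)^2}{(1-r^2)+r^2(v_1-1)^2}\right)^2 .
\]
The numerator simplifies to $-r^2v_1^2+2v_1-1$, and the denominator to $r^2v_1^2-2r^2v_1+1$.

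\textbf{Step 2: substitute $v_1=w_1/r$.} The numerator becomes $-(w_1^2-2w_1/r+1)$ and the denominator becomes $w_1^2-2rw_1+1$. Multiplying both by $r$, the numerator becomes $\pm\bigl(r(w_1^2+1)-2w_1\bigr)$, and after squaring the sign disappears. This gives
\[
v_4=\frac{\bigl(r(w_1^2+1)-2w_1\bigr)^2}{r^2\bigl((w_1^2+1)-2rw_1\bigr)^2},
\]
and hence $w_4=rv_4$ is exactly the stated formula, with a single factor $r$ left in the denominator.

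\textbf{Step 3: the addition formula.} Here $w_3w_0=r^2v_3v_0$. The same manipulation applies: the inner numerator becomes $-r^2v_1v_2+(v_1+v_2)-1$ and the inner denominator becomes $r^2v_1v_2-r^2(v_1+v_2)+1$. Substituting $v_i=w_i/r$ gives, up to sign, $\bigl(r(w_1w_2+1)-(w_1+w_2)\bigr)/r$ over $(w_1w_2+1)-r(w_1+w_2)$. After squaring and multiplying by $r^2$, the powers of $r$ cancel and we obtain the claimed expression.

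The main obstacle is only the bookkeeping of the powers of $r$ and of the signs, which disappear because every expression is squared. Nondegeneracy of the denominators is inherited from the hypotheses implicit in Proposition~\ref{P:TEDW9}, because $1-r^2\neq 0$ follows from $a\ne d$.
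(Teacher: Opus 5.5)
Your proof is correct, and your route differs from the paper's only in organization. With $d=ar^2$ you get $t=r^2/(1-r^2)$. The inner fractions of Proposition~\ref{P:TEDW9} then become $(-r^2v_1^2+2v_1-1)/(r^2v_1^2-2r^2v_1+1)$ and $(-r^2v_1v_2+v_1+v_2-1)/(r^2v_1v_2-r^2(v_1+v_2)+1)$. The substitution $v_i=w_i/r$, together with $w_4=rv_4$ and $w_3w_0=r^2v_3v_0$, yields exactly \eqref{eq:TA10}.

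The paper's one-line proof (``similar to Proposition~\ref{P:TEDW7}'') means recomputing $y_4$ and $y_3y_0$ from the addition law and eliminating $x_i^2=(1-y_i^2)/(a-dy_i^2)$, now for $w=ry^2$. You instead treat Proposition~\ref{P:TEDW9} as a black box and perform a change of variables. This saves redoing the elimination and shows that \eqref{eq:TA10} is just \eqref{eq:TA09} rewritten in the rescaled coordinate. The price is that you depend on Proposition~\ref{P:TEDW9}, whose proof in the paper skips the key cancellation.

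That proposition is nevertheless right. After clearing $(a-dv_1)(a-dv_2)$, the numerator and denominator of $y_3y_0$ are $(dv_1v_2-a(v_1+v_2)+a)(dv_1v_2-a)$ and $-(dv_1v_2-d(v_1+v_2)+a)(dv_1v_2-a)$, so the common factor $dv_1v_2-a$ cancels.

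Your nondegeneracy remark is also justified. The two sets of denominators differ only by the nonzero factors $1-r^2=(a-d)/a$ and $r$ (the latter nonzero since $d\neq 0$), so they vanish simultaneously.
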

\begin{proof}
The proof is similar to the proposition \ref{P:TEDW7}. \qed
\end{proof}
As before, assume that $w_0$ is given as a field element, and the inputs $w_1, w_2$ are given as fractions
$W_1/Z_1$ , $W_2/Z_2$ and the outputs $w_4, w_3 $ are given as fraction $W_4/Z_4$ and  $W_3/Z_3$. 

From Eq.~\eqref{eq:TA09} the explicit projective formulas are given by
\begin{equation*}
\begin{array}{c}
 \dfrac{W_4}{Z_4}= \dfrac{\Big(W_1^2-(t+1)(W_1-Z_1)^2\Big)^2} {\Big(Z_1^2+t(W_1-Z_1)^2\Big)^2} , \vspace*{7pt}\\
 \dfrac{W_3}{Z_3}= \dfrac{Z_0~\Big(W_1W_2-(t+1)(W_1-Z_1)(W_2-Z_2)\Big)^2} {W_0\Big(Z_1Z_2+t(W_1-Z_1)(W_2-Z_2)\Big)^2}\enspace. 
\end{array}
\end{equation*}
The following mixed projective
$w$-coordinates differential addition and doubling formulas have the total cost of $4\M+7\s+2\D $ :
\begin{equation}
\label{eq:CoTE}
\begin{array}{c}
A_1=(W_1-Z_1),\ B_1=W_1^2,\ C_1=Z_1^2\vspace*{3pt}\\
 A_2=(W_2-Z_2),\ B_2=W_1W_2,\ C_2=Z_1Z_2  \vspace*{3pt}\\
W_4= (B_1-A_1^2-tA_1^2)^2, \ Z_4= (C_1+tA_1^2)^2,~~\vspace*{7pt}\\
W_3=(B_2-A_1A_2-tA_1A_2)^2~~,~~Z_3=w_0(C_2+tA_1A_2)^2 \enspace. 
\end{array}
\end{equation}
Also from differential and doubling formulas~\eqref{P:TEDW10} and by seting $s=(1+r)/(1-r)$, the explicit projective formulas are given by

\begin{equation*}
\begin{array}{c}
 \dfrac{W_4}{Z_4}= \dfrac{\Big(s(W_1-Z_1)^2-(W_1+Z_1)^2\Big)^2} {r\Big(s(W_1-Z_1)^2+(W_1+Z_1)^2\Big)^2} , \vspace*{7pt}\\
 \dfrac{W_3}{Z_3}= \dfrac{Z_0~\Big(s(W_1-Z_1)(W_2-Z_2)-(W_1+Z_1)(W_2+Z_2)\Big)^2} {W_0\Big(s(W_1-Z_1)(W_2-Z_2)+(W_1+Z_1)(W_2+Z_2)\Big)^2}\enspace. 
\end{array}
\end{equation*}
The following mixed projective
$w$-coordinates differential addition and doubling formulas have the total cost of $3\M+6\s+3\D $ :
\begin{equation}
\label{eq:CoTE09}
\begin{array}{c}
A_1=(W_1-Z_1),\ B_1=(W_1+Z_1),\vspace*{3pt}\\
A_2=(W_2-Z_2) ,\ B_2=(W_2+Z_2),\vspace*{3pt}\\
W_4= (~sA_1^2 -B_1^2)^2, \ Z_4= r(~sA_1^2 + B_1^2)^2,~~\vspace*{7pt}\\
W_3=(sA_1A_2-B_1B_2)^2~~,~~Z_3=w_0(sA_1A_2+B_1B_2)^2 \enspace. 
\end{array}
\end{equation}
In the next proposition, we show that if $\chi(a)=\chi(d)=-1$, then differential addition formula \ref{eq:CoTE09} is complete.
\begin{proposition}\label{P:TEDW09}
Fix a field  $\F_q$ with odd charactristic and $a,d \in \F_q $ such that $ad(a-d) \ne 0$ and $\chi(a)=\chi(d)=-1$.
Define $w:\E_{\TE,a,d}(\F_q) \mapsto \F_q \cup  \{\infty \}$ as fallow : $w(x,y)=\sqrt{d/a}\; y^2$. 

Let $W_i,Z_i~(i=0,...,4)$  be elements of $\F_q$ and $r=\sqrt{d/a} , s=(1+r)/(1-r)$. Define
\begin{equation*}
\begin{array}{c}
A_1=(W_1-Z_1),\ B_1=(W_1+Z_1),\vspace*{3pt}\\
A_2=(W_2-Z_2) ,\ B_2=(W_2+Z_2),\vspace*{3pt}\\
W_4= (~sA_1^2 -B_1^2)^2, \ Z_4= r(~sA_1^2 + B_1^2)^2,~~\vspace*{7pt}\\
W_3=Z_0(sA_1A_2-B_1B_2)^2~~,~~Z_3=W_0(sA_1A_2+B_1B_2)^2 \enspace.
\end{array}
\end{equation*}
a) Let $P_1$ be an element of $\E_{\TE,a,d}$ and  $w(P_1)=W_1/Z_1$, then $W_4/Z_4 \ne 0/0$ and $w(2P_1)=W_4/Z_4$. \\
b) Let $P_1$ and $P_2$ be two element of $\E_{\TE,a,d}$ and  $w(P_1)=W_1/Z_1 ,w (P_2)=W_2/Z_2$,  then $W_3/Z_3 \ne 0/0$ and $w(P_1+P_2)=W_3/Z_3$. 

Here $W/Z$ means the quotient of $W$ and $Z$ in $\F_q$  if $Z \ne 0$; it means $\infty$ if $W \ne 0$ and $Z=0$; it is undefined if $W=Z=0$.
\end{proposition}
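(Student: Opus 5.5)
I will follow the template of Proposition~\ref{P:TEDW07}. First I control where $w$ and the formulas can degenerate, then I check non-degeneracy of the outputs, and finally I invoke Proposition~\ref{P:TEDW10} for correctness.

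\emph{Preliminaries.} With $\chi(a)=\chi(d)=-1$ we have $\chi(ad)=1$ and $\chi(d/a)=1$, so $r=\sqrt{d/a}\in\F_q$. There are no points above $\infty_1$ because $\chi(d)=-1$. Above $\infty_2$ lie the points $(1:0:\pm\sqrt{a/d}:0)$, and for these $y^2=Y^2/Z^2$ must be read through the $\mathbb{P}^3$ model. Since $T=XY/Z$, we get $y^2=T^2/X^2\cdot(\ldots)$; I would compute the value directly from $aX^2+Y^2=Z^2+dT^2$ and expect $w(\infty_2)=(1:0)$, i.e.\ $\infty$. Affine points have $y\neq0$, since $y=0$ forces $ax^2=1$, contradicting $\chi(a)=-1$. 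Hence $w(P)\ne0$ for every rational point. Moreover $y^2=1/r$ would give $x^2=(1-y^2)/(a-dy^2)$ with $a-d/r=a(1-r)$, so I will record which values $W/Z=\pm1$, $W/Z=s$ and so on can actually occur.

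\emph{Non-degeneracy.} For (a), suppose $W_4=Z_4=0$. Then $sA_1^2=B_1^2$ and $sA_1^2=-B_1^2$, so $B_1^2=0$ and $sA_1^2=0$. Since $s\neq0$ (because $r\neq-1$, as $a\ne d$), this gives $A_1=B_1=0$, hence $W_1=Z_1=0$, a contradiction. For (b), $Z_0W_0\neq0$ holds because $w(P_0)\ne0$, and if $w(P_0)=\infty$ I treat that case separately. Then $W_3=Z_3=0$ forces $sA_1A_2=B_1B_2=0$. Up to symmetry this means $A_1=0$ and $B_2=0$, i.e.\ $w_1=1$ and $w_2=-1$ (or the reverse). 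So $ry_1^2=1$ and $ry_2^2=-1$, which gives $\chi(r)=\chi(-r)=1$ and hence $\chi(-1)=1$. Substituting $y_1^2=1/r$ into the curve equation gives $ax_1^2=(1-1/r)\cdot\frac{a}{a-d/r}$, roughly; I would look for a contradiction with $\chi(a)=-1$ or $\chi(d)=-1$, exactly as in Proposition~\ref{P:TEDW07}.

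\emph{Correctness.} For affine inputs where the denominators in \eqref{eq:TA10} are nonzero, the formulas follow from Proposition~\ref{P:TEDW10}. To see this, rewrite \eqref{eq:TA10} using $w=(B-A)/(B+A)$-type substitutions, i.e.\ expand $r(w_1w_2+1)-(w_1+w_2)$ in $A_i,B_i$. This produces $\tfrac12\big((r-1)B_1B_2+(r+1)A_1A_2\big)$, which is proportional to $sA_1A_2-B_1B_2$, and similarly for the other factor. The remaining cases are handled by continuity and the projective form: $P_1$ or $P_2$ equal to $\infty_2$; the identity $(0,1)$, where $w=r$; and $(0,-1)$, which also has $w=r$. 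In each such case I plug in and compare with the group law from \eqref{Ad:TE1}.

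\emph{Main obstacle.} The hard part is showing that the affine denominators $(w_1^2+1)-2rw_1$ and $(w_1w_2+1)-r(w_1+w_2)$ never vanish on rational points, or, when they do vanish, that the projective output still equals $(1:0)=w(\infty_2)$. This is the real completeness content. I would first try to show that a vanishing denominator forces $P_1\pm P_2$ (or $2P_1$) to be $\infty_2$, using the identity $1-d^2x_1^2x_2^2y_1^2y_2^2=0$ from the proof of Proposition~\ref{P:TEDW7}. That is the only way $y_3$ can be infinite.
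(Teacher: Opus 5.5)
\textbf{There is a genuine gap: a wrong value of $w$ at the rational points at infinity, and your plan for the crux is built on it.} Your outline otherwise follows the paper's proof: every rational point has $WZ\neq 0$, non-degeneracy comes from $A_1A_2=B_1B_2=0$ plus a quadratic-character contradiction, and correctness comes from Proposition~\ref{P:TEDW10}.

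\emph{The error.} At $\infty_2=(1:0:\pm\sqrt{a/d}:0)$ the relation $XY=ZT$ gives $y=Y/Z=T/X=\pm\sqrt{a/d}$. So $w(\infty_2)=r\cdot a/d=1/r$, not $\infty$; this is the value the paper uses. The only poles of $y$ are $(0:\pm\sqrt d:1:0)$, which are not rational because $\chi(d)=-1$. Hence $w$ maps $\E_{\TE,a,d}(\F_q)$ into $\F_q^*$, and no legitimate input or output is ever $(1:0)$.

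\emph{Why this breaks your plan.} You propose to show that a vanishing denominator means $2P_1$ or $P_1\pm P_2$ equals $\infty_2$, with output $(1:0)=w(\infty_2)$. That targets a false statement. A zero $Z_4$ or $Z_3$ would always be a wrong answer. Conversely, $P_1+P_2=\infty_2$ does occur: translation by $\infty_2$ sends $y$ to $\pm\sqrt{a/d}/y$, i.e.\ $w\mapsto 1/w$, so take $P_2=\infty_2-P_1$ with $w_2=1/w_1$. There $Z_3\neq 0$ and $W_3/Z_3=1/r$. Likewise, your check of $P_1=\infty_2$ in (a) with input $(1:0)$ returns $r$ only by coincidence. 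The input that must be checked is $(W_1:Z_1)=(1:r)$, which gives $W_4/Z_4=4r^2(1+r)^2/\bigl(4r(1+r)^2\bigr)=r=w(\cO)$.

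\emph{What must be proved instead} is that $Z_4$ and $Z_3$ never vanish.
\begin{itemize}
\item Doubling: $sA_1^2+B_1^2$ is a nonzero multiple of $W_1^2+Z_1^2-2rW_1Z_1$. For affine $P_1$, use $d=ar^2$ and $x_1^2=(1-y_1^2)/(a-dy_1^2)$; then this vanishes iff $1-dx_1^2y_1^2=0$, which is impossible since $\chi(d)=-1$. At $(1:r)$ it equals $1-r^2\neq 0$.
\item Addition: the relation $w_0w_3\bigl((w_1w_2+1)-r(w_1+w_2)\bigr)^2=\bigl(r(w_1w_2+1)-(w_1+w_2)\bigr)^2$ behind Proposition~\ref{P:TEDW10} is an identity between functions regular wherever $w_0,\dots,w_3$ are finite. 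It therefore holds at every rational pair. So $Z_3=0$ would force $W_3=0$, which non-degeneracy excludes, and then $w(P_1+P_2)=W_3/Z_3$ follows.
\end{itemize}
This algebraic extension is what your appeal to ``continuity'' has to mean over $\F_q$; the paper's proof of (b) also leaves it implicit.

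\emph{Unfinished step.} Your non-degeneracy computation in (b) stops at ``roughly''. Since $a-d/r=a(1-r)$, the substitution $y_1^2=1/r$ gives exactly $ax_1^2=-1/r$. This is a nonzero square because $\chi(-r)=1$, so $\chi(a)=1$, which is the contradiction the paper uses.
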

\begin{proof}

Since $\chi(a)=\chi(d)=-1$, so for every affine point $P=(x,y)$ we have $y \ne 0$ and so $w(P) \ne 0$. Also the only points at infinity are $\infty_2=(1:0:\pm \sqrt{a/d}:0)$, where $w(\infty_2)=(1/r:1)$. So for every point of $\F_q$-rational projective points of $\E_{\TE,a,d}$ , we have $WZ \ne 0$.

a) If $W_4=Z_4=0$, then $A_1=B_1=0$ and so $W_1=Z_1=0$, contradiction. So $W_4/Z_4 \ne 0/0$ as claimed.
 On the other hands, if $P_1=\infty_2$, then $2P_1=(0,1)$ so $w(2P_1)=W_4/Z_4=(r:1)$. If $P_1$ be an affine point, then from proposition \ref{P:TEDW7}  $w(2P_1)=W_4/Z_4$ as claimed.
 
 b) If  $W_3=Z_3=0$, then $A_1A_2=B_1B_2=0$. If $A_1=B_1=0$ or $A_2=B_2=0$ then $W_1=Z_1=0$ or $W_2=Z_2=0$,contradiction.  If $A_1=B_2=0$ then $W_1=Z_1$ and $W_2=-Z_2$.
 So $w(P_1)=1$ and $w(P_2)=-1$. So $ry_1^2=1$ and $ry_2^2=-1$. Therefore we have $\chi(r)=\chi(-r)=1$. Now by substituation $y_1^2=1/r$ in curve equation we have $ax_1^2=-1/r$, which implies that $\chi(a)=1$, contradiction. The case where $A_2=B_1=0$ similary produces a contradiction. So $W_3/Z_3 \ne 0/0$ as claimed.
~~~ \qed
\end{proof}
\subsection{Montgomery curves}
Now, we consider the Montgomery curves. Note that above $w$-coordinates differential addition and doubling formulas for twisted Edwards curves can be applied for Montgomery curve using the birational maps between these two curves \eqref{MonTE}. Furthermore, from formulas \eqref{eq:P4} and \eqref{eq:P6}, we give the mixed $x$-coordinates differential addition and doubling formulas for Montgomery curves with cost of $3\M+7\s+1\D$ and $3\M+6\s+3\D$. 

We recall \cite{Mo}, that for the Montgomery curve $\E_{\M,A,B}$ with the rational function $w(x,y)=x$, we have the following differential addition
formulas. 
\begin{gather*}\label{eq:A3}
w_4	= \dfrac{(w_1^2-1)^2 }{4w_1((w_1+1)^2-ew_1)}~~~~~,~~~~w_3w_0 = \dfrac{(w_1w_2-1)^2}{(w_1-w_2)^2}~,
\end{gather*}
where $e=2-A$. In other words, the $x$-coordinates formulas for Montgomery curves and above $w$ coordinates formulas \eqref{eq:A1} for twisted Edwards curves are inverse of each other. It means the projective formulas for Montgomery curves is obtained by the projective formulas \eqref{eq:P1} only by swapping the role of $W$ and $Z$.   
Therefore, from formulas \eqref{eq:P4} we have the following formulas with cost of $3\M+7\s+1\D$    
\begin{equation}
\label{eq:M4}
\begin{array}{c}
 A_1=(W_1+Z_1) ,\ B_1=(W_1-Z_1),\ A_2=(W_2+Z_2),\ B_2=(W_2-Z_2), \vspace*{3pt}\\
 C=A_1B_2~ ,~\ D=A_2B_1~,~\ 	E=A_1^2-B_1^2 ~,~F=(A_1^4+B_1^4)-E^2, \vspace*{3pt}\\
W_4	= F ~~~,~~~Z_4=2(A_1^4-(e/4)E^2)-F, \vspace*{3pt}  \\
W_3=w_0(C+D)^2~~~,~~~Z_3=(C-D)^2 \enspace.  
\end{array}
\end{equation}
Furthermore, for the Montgomery curves $\E_{\M,A,B}$ with $\chi(A^2-4)=1$,  
the cost of the following mixed differential addition and doubling formulas is  $3\M+6\s+3\D$.
\begin{equation}
\label{eq:M6}
\begin{array}{c}
A_1=(W_1 + Z_1)~,~ B_1=(W_1 - Z_1)~,~A_2=(W_2 + Z_2)~, B_2=(W_2 -Z_2), \vspace*{3pt}\\
C= A_1~B_2~,~D= A_2~B_1~,~H_1=(rA_1^2+B_1^2)^2~,~ H_2=(rA_1^2-B_1^2)^2, \vspace*{3pt}\\
G=(H_1+H_2)\ ,\ K=(H_1-H_2)\ ,\ S=\frac{1}{r}K\ ,\ T=rK, \vspace*{3pt}\\
W_4=T-S~~,~~Z_4=2G-S-T, \vspace*{3pt}\\
W_3=w_0(C+D)^2~~,~~Z_3=(C-D)^2 \enspace.
\end{array}
\end{equation}
Where $r=(A+2)/\sqrt{A^2-4}$. So, if the parameter $r$ is chosen to be small then the cost of mixed differential addition and doubling formulas is $3\M+6\s+1\D$.

\begin{example}
Assume that, $p=2^{255}-19$, $A=486662$ and $B=1$. The Montgomery curve $\E_{\M,A,B}$ over $\F_p$ is of order $8\ell$, where $\ell$ is the prime 
\begin{gather*}
\ell=72370055773322622139731865630429942408\\
\qquad  57116359379907606001950938285454250989.  
\end{gather*}

The cost of the mixed differential addition and doubling formulas \eqref{eq:M4} is
$3\M+7\s+1\D$,  where $1\D$ is the multiplication by the small constant $-e/4=121665$ .
 
\end{example}
\begin{proposition}\label{P:CoMont}
	Let $w$ be the function on Montgomery curve $\E_{\M,A,B}$  over $\F_q$ given by $w(x,y)=\frac{\sqrt{A^2-4}}{A+2}\; \dfrac{(x-1)^2}{(x+1)^2}$. Let $P_1$ , $P_2$ be two points of Montgomery curve $\E_{\M,A,B}$ . Consider the $w$-coordinate dADD for $P_1$ , $P_2$. Then we have 
	\begin{equation}\label{eq:CoMon2}
	w_4	= \dfrac{\Big(r(w_1^2+1)-2w_1\Big)^2}{r\Big((w_1^2+1)-2rw_1\Big)^2}~~~,~~~
	w_3w_0 = \dfrac{\Big( r(w_1w_2+1)-(w_1+w_2)\Big)^2}{\Big((w_1w_2+1)-r(w_1+w_2)\Big)^2}~,
	\end{equation}
Where $r=\frac{\sqrt{A^2-4}}{A+2}$.
\end{proposition}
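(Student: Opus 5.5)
The plan is to transport Proposition~\ref{P:TEDW10} along the birational map $\psi^{-1}$ of \eqref{MonTE}, so that the formulas \eqref{eq:CoMon2} follow without any new computation on the Montgomery side. Let $\E_{\TE,a,d}$ be the twisted Edwards curve with $a=(A+2)/B$ and $d=(A-2)/B$, so that $\psi^{-1}:\E_{\M,A,B}\to\E_{\TE,a,d}$, $(X,Y)\mapsto (X/Y,(X-1)/(X+1))$. Then $d/a=(A-2)/(A+2)$, and hence
\[
\sqrt{d/a}=\sqrt{\frac{A-2}{A+2}}=\frac{\sqrt{A^2-4}}{A+2},
\]
once the square root of $d/a$ is fixed compatibly with the chosen $\sqrt{A^2-4}$. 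This is exactly the constant $r$ of the statement.

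Next I check that the Montgomery $w$-function is the pullback of the Edwards one. The Edwards $y$-coordinate of $\psi^{-1}(x,y)$ is $(x-1)/(x+1)$, so
\[
w_{\M}(x,y)=r\frac{(x-1)^2}{(x+1)^2}=\sqrt{d/a}\,\bigl(y\circ\psi^{-1}\bigr)^2=w_{\TE}\circ\psi^{-1},
\]
where $w_{\TE}(x,y)=\sqrt{d/a}\,y^2$ is the function of Proposition~\ref{P:TEDW10}. The map $\psi^{-1}$ is a group isomorphism, being an isomorphism of elliptic curves that sends $\cO$ to the neutral point $(0,1)$; indeed $X=\infty$ gives $y=1$. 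So for $P_1,P_2$ on $\E_{\M,A,B}$ the quantities $w_{\M}(P_1\pm P_2)$ and $w_{\M}(2P_1)$ equal $w_{\TE}$ evaluated at $\psi^{-1}(P_1)\pm\psi^{-1}(P_2)$ and at $2\psi^{-1}(P_1)$. Hence the identities \eqref{eq:TA10}, with $r=\sqrt{d/a}$, hold verbatim for $w_{\M}$, which gives \eqref{eq:CoMon2}.

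The main obstacle is the status of Proposition~\ref{P:TEDW10}, whose proof is only sketched as ``similar to Proposition~\ref{P:TEDW7}.'' I would back it up by a direct substitution into Proposition~\ref{P:TEDW9}. Setting $u_i=w_i/r$, the relation $t+1=a/(a-d)=1/(1-r^2)$ and $t=r^2/(1-r^2)$ turn $w_4=r u_4$ with the formula \eqref{eq:TA09} into
\[
w_4=\frac{\bigl(r(w_1^2+1)-2w_1\bigr)^2}{r\bigl((w_1^2+1)-2rw_1\bigr)^2}
\]
after multiplying numerator and denominator by $(1-r^2)^2r^4$. The product formula $w_3w_0=r^2(u_3u_0)$ works the same way; this is routine algebra that I would verify line by line. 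One must also note the identification of the points at infinity and exceptional points under $\psi^{-1}$, but since the statement is a generic rational identity, as are Propositions~\ref{P:TEDW7}--\ref{P:TEDW10}, it suffices to verify it on the open set where all quantities are defined.
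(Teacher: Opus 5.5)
Your argument is correct. The paper's proof environment for this proposition is empty, but the preceding text says the twisted Edwards formulas ``can be applied for Montgomery curve using the birational maps''. That is exactly your route: pull back Proposition~\ref{P:TEDW10} along $\psi^{-1}$ with $a=(A+2)/B$, $d=(A-2)/B$, so that $r=\sqrt{A^2-4}/(A+2)$ satisfies $r^2=d/a$ and $w=(\sqrt{d/a}\,y^2)\circ\psi^{-1}$. Your derivation of Proposition~\ref{P:TEDW10} from Proposition~\ref{P:TEDW9} via $u_i=w_i/r$, $t=r^2/(1-r^2)$ also checks out, and it supplies the computation the paper only calls ``similar''. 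Since it uses only $d=r^2a$, any choice of $\sqrt{A^2-4}$ works and no compatibility of square roots is needed.
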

if $\chi(A^2-4)=1, \chi(B(A+2))=-1$ then this formula is complete.
\begin{proof}
 \qed
\end{proof}

\begin{proposition}\label{P:CoMont2}
Fix a field  $\F_q$ with odd charactristic and $A,B \in \F_q $ such that $B(A^2-4) \ne 0$ and $\chi(A^2-4)=1, \chi(B(A+2))=-1$.
Define $w:\E_{\M,A,B}(\F_q) \mapsto \F_q \cup  \{\infty \}$ as fallow : $w(x,y)=\frac{\sqrt{A^2-4}}{A+2}\; \dfrac{(x-1)^2}{(x+1)^2}$. 

Let $W_i,Z_i~(i=0,...,4)$  be elements of $\F_q$ and $r=\frac{\sqrt{A^2-4}}{A+2} , s=(1+r)/(1-r)$. Define
\begin{equation*}
\label{eq:CoMont}
\begin{array}{c}
A_1=(W_1-Z_1),\ B_1=(W_1+Z_1),\vspace*{3pt}\\
A_2=(W_2-Z_2) ,\ B_2=(W_2+Z_2),\vspace*{3pt}\\
W_4= (~sA_1^2 -B_1^2)^2, \ Z_4= r(~sA_1^2 + B_1^2)^2,~~\vspace*{7pt}\\
W_3=(sA_1A_2-B_1B_2)^2~~,~~Z_3=w_0(sA_1A_2+B_1B_2)^2 \enspace.
\end{array}
\end{equation*}
a) Let $P_1$ be an element of $\E_{\M,A,B}$ and  $w(P_1)=W_1/Z_1$, then $W_4/Z_4 \ne 0/0$ and $w(2P_1)=W_4/Z_4$. \\
b) Let $P_1$ and $P_2$ be two element of $\E_{\M,A,B}$  and  $w(P_1)=W_1/Z_1 ,w (P_2)=W_2/Z_2$,  then $W_3/Z_3 \ne 0/0$ and $w(P_1+P_2)=W_3/Z_3$. 

Here $W/Z$ means the quotient of $W$ and $Z$ in $\F_q$  if $Z \ne 0$; it means $\infty$ if $W \ne 0$ and $Z=0$; it is undefined if $W=Z=0$.
\end{proposition}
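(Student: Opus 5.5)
We transport Proposition~\ref{P:TEDW09} to the Montgomery side through the birational map $\psi^{-1}$ of \eqref{MonTE}; we do not redo the computation on the Montgomery curve. The twisted Edwards curve $\E_{\TE,a,d}$ attached to $\E_{\M,A,B}$ has $a=(A+2)/B$ and $d=(A-2)/B$. Then $d/a=(A-2)/(A+2)=(A^2-4)/(A+2)^2$, so $\sqrt{d/a}=\sqrt{A^2-4}/(A+2)=r$, and $r\in\F_q$ because $\chi(A^2-4)=1$. The $y$-coordinate of $\psi^{-1}(x,y)$ is $(x-1)/(x+1)$. Hence the Montgomery $w$-function of the statement equals $w_{\TE}\circ\psi^{-1}$, where $w_{\TE}(x,y)=\sqrt{d/a}\,y^2$ is the function of Proposition~\ref{P:TEDW10}. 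The formulas defining $W_3,Z_3,W_4,Z_4$ coincide with \eqref{eq:CoTE09}, and the constants $r,s$ are the same.

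Next we check the hypotheses of Proposition~\ref{P:TEDW09}. We have $\chi(a)=\chi(B(A+2))$, since $a=(A+2)/B$ and $\chi(1/B)=\chi(B)$; this equals $-1$ by assumption. Also $\chi(d)=\chi(a)\chi(d/a)=\chi(a)\chi(r^2)=-1$. The condition $ad(a-d)\ne0$ follows from $B(A^2-4)\ne0$, since $a-d=4/B$. Thus $\E_{\TE,a,d}$ satisfies $\chi(a)=\chi(d)=-1$.

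We then need $\psi$ to be a group isomorphism between $\E_{\M,A,B}(\F_q)$ and the $\F_q$-points of the projective closure of $\E_{\TE,a,d}$ in $\mathbb{P}^3$, compatible with $w$. This is the step needing care. The map $\psi^{-1}$ is undefined or degenerate exactly at $\cO$, at $(0,0)$, and at points with $x=-1$ or $y=0$. We handle these by hand.
\begin{itemize}
\item $\cO$ goes to the identity $(0,1)$. Both $w$-values equal $r$, since $(x-1)^2/(x+1)^2\to1$.
\item $(0,0)$ goes to $(0,-1)$. Both $w$-values equal $r$.
\item A point with $x=-1$ would require $B(-1+A-1)=B(A-2)$ to be a square. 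But $\chi(B(A-2))=\chi(B(A+2))\chi(A^2-4)=-1$, so no such point exists. This is consistent with $w$ being finite everywhere.
\item The other $2$-torsion points, with $y=0$ and $x\ne0$, exist iff $\chi(A^2-4)=1$. They correspond to the points at infinity $\infty_2=(1:0:\pm\sqrt{a/d}:0)$ of the Edwards model, where $w=(1/r:1)$. For these $x$ satisfies $x^2+Ax+1=0$, and one checks directly that $r(x-1)^2/(x+1)^2=1/r$.
\end{itemize}
So $w_{\M}(P)=w_{\TE}(\psi^{-1}(P))$ holds as points of $\mathbb{P}^1(\F_q)$ for every $P\in\E_{\M,A,B}(\F_q)$. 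Since $\psi^{-1}$ extends to an isomorphism of elliptic curves sending $\cO$ to the identity, it is a group homomorphism: $\psi^{-1}(2P_1)=2\psi^{-1}(P_1)$ and $\psi^{-1}(P_1+P_2)=\psi^{-1}(P_1)+\psi^{-1}(P_2)$.

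Parts a) and b) then follow directly from parts a) and b) of Proposition~\ref{P:TEDW09}, applied to the images $\psi^{-1}(P_1)$ and $\psi^{-1}(P_2)$. In b), $w_0$ is the $w$-value of the difference point, which is nonzero and finite by the first line of the proof of Proposition~\ref{P:TEDW09}.

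The main obstacle is the exceptional-point bookkeeping above, especially verifying the value at the roots of $x^2+Ax+1$. The argument also relies on Proposition~\ref{P:TEDW09}(b) being fully justified. If that dependence looks shaky, as a fallback I would verify b) directly. Assume $W_3=Z_3=0$; this forces $sA_1A_2=B_1B_2=0$. Then $w(P_1)=\pm1$ and $w(P_2)=\mp1$. Translating to $x$, this gives $\chi(-1)=1$ together with a square condition contradicting $\chi(B(A+2))=-1$, exactly parallel to the argument in Proposition~\ref{P:TEDW09}.
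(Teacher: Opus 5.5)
Your proposal is correct, but it is organized differently from the paper's proof.

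\textbf{What the paper does.} The paper never invokes Proposition~\ref{P:TEDW09}. It re-runs that proposition's non-degeneracy argument on the Montgomery side: $W_4=Z_4=0$ forces $A_1=B_1=0$, and $W_3=Z_3=0$ forces $A_1A_2=B_1B_2=0$, so $w(P_1)=\pm1$, $w(P_2)=\mp1$, and $\chi(r)=\chi(-r)=1$. It then closes by substituting $y_1^2=1/r$ into ``the curve equation'' to get $ax_1^2=-1/r$ and $\chi(a)=1$. That step only makes sense under the unstated identification $y=(x-1)/(x+1)$, $a=(A+2)/B$. The paper also never argues that the outputs actually equal $w(2P_1)$ and $w(P_1+P_2)$.

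\textbf{What your route does.} Your transport lemma is exactly what legitimizes that tacit step:
\begin{itemize}
\item You verify $w_{\M}=w_{\TE}\circ\psi^{-1}$ on every point of $\E_{\M,A,B}(\F_q)$. This includes $\cO$, $(0,0)$, and the roots of $x^2+Ax+1$, where $(x-1)^2/(x+1)^2=(A+2)/(A-2)=1/r^2$. It also covers the points with $x=-1$, which do not exist under the hypotheses.
\item You translate the hypotheses into $\chi(a)=\chi(d)=-1$ and $ad(a-d)\ne0$.
\item You use that $\psi^{-1}$ is a group isomorphism sending $\cO$ to the identity.
\end{itemize}
Together these bring both the non-degeneracy and the value claims over from Proposition~\ref{P:TEDW09} in one stroke. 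Your fallback is essentially the paper's argument.

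\textbf{The shared caveat.} The price of your route is that you inherit Proposition~\ref{P:TEDW09}(b), whose proof likewise only shows $(W_3,Z_3)\ne(0,0)$ and never shows that the output equals $w(P_1+P_2)$ at exceptional inputs. To make either route airtight, add the standard remark: $(P_1,P_2)\mapsto w(P_1+P_2)$ and $(P_1,P_2)\mapsto(W_3:Z_3)$ are both morphisms to $\mathbb{P}^1$ on the open subset of $E\times E$ where $(W_3,Z_3)\ne(0,0)$. They agree on the dense open subset where the identity of Proposition~\ref{P:TEDW10} was derived, so they agree wherever the formula is non-degenerate.
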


\begin{proof}
a) If $W_4=Z_4=0$, then $A_1=B_1=0$ and so $W_1=Z_1=0$, contradiction. So $W_4/Z_4 \ne 0/0$ as claimed.
 
 b) If  $W_3=Z_3=0$, then $A_1A_2=B_1B_2=0$. If $A_1=B_1=0$ or $A_2=B_2=0$ then $W_1=Z_1=0$ or $W_2=Z_2=0$,contradiction.  If $A_1=B_2=0$ then $W_1=Z_1$ and $W_2=-Z_2$.
 So $w(P_1)=1$ and $w(P_2)=-1$. So $r(\frac{x_1-1}{x_1+1})^2=1$ and $r(\frac{x_2-1}{x_2+1})^2=-1$. Therefore we have $\chi(r)=\chi(-r)=1$. Now by substituation $y_1^2=1/r$ in curve equation we have $ax_1^2=-1/r$, which implies that $\chi(a)=1$, contradiction. The case where $A_2=B_1=0$ similary produces a contradiction. So $W_3/Z_3 \ne 0/0$ as claimed.
~~~ \qed
 
\end{proof}


\section{Concluding Remarks}
The known Montgomery ladder differential addition formulas for elliptic curves over a finite field are not complete; they work for all input points $P$ except for the case where $w(P)$ equals $(1:0)$ or $(0:1)$. However, the Montgomery ladder algorithm works perfectly in cryptographic applications, since the order of base point $P$ should be a large prime number. The cost of the Montgomery-like formulas 
is $5\M+4\s+1\D$ if the base point $P$ is affine. We believe, this record can be obtained
for any form of elliptic curve with group order divisible by 4 by a suitable rational function. 
This includes the family of Jacobi curves.  

Our proposed Montgomery-like formulas for
twisted Edwards curves are improved in terms of efficiency and speed. 
They are almost complete formulas if the curve parameters are chosen carefully. 
The mixed formulas are provided for twisted Edwards curves with the cost of $3\M+7\s+1\D$. Also, faster mixed formulas are presented for a subfamily of twisted Edwards curves with the cost of $3\M+6\s+3\D$ which gives further speedup if the parameters are chosen to be small.   

In Table 1, we compare our new differential addition formulas with the known formulas for other forms of elliptic
curves. Notice, the fast and efficient presented formulas by Gaudry-Lubicz \cite{GL9} and Bernstein-Lange \cite{BLEFD} are given with the cost of $4\M+6\s+3\D$, and $3\M+6\s+3\D$ if the base point is affine, only for subfamily of elliptic curves with 3 points of order 2. Our formulas have the same costs and presented for a subfamily of twisted Edwards with a point of order 4 which includes the complete twisted Edwards curves therein. 

For complete twisted Edwards curves, the proposed $w$ functions are invariant in the coset of a point $P$ with respect to the subgroup of $\F_q$-rational points with order 4. And, for incomplete twisted Edwards curves the suggested $w$ function is invariant in the coset of a point $P$ up to the subgroup of full 2-torsion points. 
For future works, we are going to investigate the use of these differential addition formulas along with the eliminating cofactors technique through point compression \cite{H15}. Computing the full point representation at the end of Montgomery ladder is an alternative question which is useful for cryptographic applications that need the full version of the scalar multiplication algorithm.    
\begin{table}[H]
	\caption{Cost of differential addition and doubling for families
		of elliptic curves in odd characteristic}\label{tab:difadd}
	\centering
	\begin{tabular}{|l|l|l|l|}
		\hline
		 Model~~ & Projective differential~~ & Mixed differential~~&Completeness \\
		\hline\hline
		Montgomery~W=X~ \cite{Mo}~~ &
		$6\M+4\s+1\D$~W=X~&$5\M+4\s+1\D$~~&No\\
	Montgomery~W=X~ \cite{BLEFD}~~ &
		$4\M+3\s+3\D$~~&$3\M+6\s+3\D$~~&No~~\\
		this work~W=X~\eqref{eq:M4}&$4\M+7\s+1\D$~~&$3\M+7\s+1\D$&No~~\\
		this work~W=X~\eqref{eq:M6}&$4\M+6\s+3\D$~~&$3\M+6\s+3\D$&No~~\\
		\hline
		Kummer curve~~\cite{GL9}~~&$4\M+6\s+3\D$~~&$3\M+6\s+3\D$&No~~ \\
		\hline
		Edwards curves &&&\\
		$(c=1,d=r^2)$~W=Y~\cite{BLEFD} &$4\M+6\s+5\D$ ~&$3\M+6\s+5\D$~&No~~\\
		$(c=1,d=r^2)$~W=Y~\cite{BLEFD}&$4\M+6\s+3\D$~~&$3\M+6\s+3\D$&No~~\\
		Jacobi quartic  &&&\\
	$(\epsilon=1,\delta=r^2) ~W=X^2~$\cite{GGX12}&$6\M+4\s+1\D$~&$5\M+4\s+1\D$~&No~~\\
		\hline
		Twisted Edwards &&&\\
	     this work~$W=dX^2Y^2 $\eqref{eq:PC2}&$6\M+6\s+2\D$~~&$5\M+6\s+2\D$&Yes~~\\
		this work~\eqref{eq:P2}&$6\M+4\s+1\D$~~&$5\M+4\s+1\D$~&No~~\\
		this work~\eqref{eq:P4}&$4\M+7\s+1\D$~~&$3\M+7\s+1\D$&No~~\\
	    this work~\eqref{eq:P6}&$4\M+6\s+3\D$~~&$3\M+6\s+3\D$&No~~\\
		\hline
	\end{tabular}
\end{table}

\vspace{5mm} \noindent \textbf{Acknowledgment.} The authors would like to thank anonymous reviewers
for their useful comments. This research was in part supported by a grant from IPM (No. 95050416).

\end{document}